\documentclass[12pt]{amsart}
\usepackage[T1]{fontenc}
\usepackage{amsmath, amssymb}
\usepackage[english]{babel}
\usepackage{mathrsfs}
\usepackage{enumitem}
\usepackage{url}

\newtheorem{thm}{Theorem}[section]
\newtheorem*{thm*}{Theorem}
\newtheorem*{cor*}{Corollary}
\newtheorem{conj}[thm]{Conjecture}
\newtheorem{lem}[thm]{Lemma}
\newtheorem{prop}[thm]{Proposition}
\newtheorem{cor}[thm]{Corollary}
\theoremstyle{definition}
\newtheorem{defn}[thm]{Definition}

\theoremstyle{remark}
\newtheorem{rem}[thm]{Remark}
\newtheorem{fact}[thm]{Fact}

\theoremstyle{plain}
\newtheorem{maintheorem}{Theorem}

\newcommand{\C}{\mathbb C}
\newcommand{\D}{\mathcal D}

\newcommand{\N}{\mathbb N}
\newcommand{\R}{\mathbb R}

\newcommand{\tv}[1]{[\![#1]\!]}
\newcommand{\meet}{\mathbin{\wedge}}
\newcommand{\join}{\mathbin{\vee}}
\newcommand{\restr}{\mathbin{\upharpoonright}}

\DeclareMathOperator{\Proj}{Proj}

\DeclareMathOperator{\ran}{ran}

\DeclareMathOperator{\Ad}{Ad}
\newcommand{\cstar}{$\mathrm{C}^*$}
\newcommand{\wstar}{$\mathrm{W}^*$}
\newcommand{\AWstar}{$\mathrm{AW}^*$}
\DeclareMathOperator{\dom}{dom}

\usepackage{todonotes}

\usepackage[bookmarks=true, colorlinks=true, linkcolor=blue, citecolor=red, hypertexnames=false, hyperindex, breaklinks]{hyperref}

\title[Every \AWstar-Algebra is Normal]{Every \AWstar-Algebra is Normal}
\subjclass[2020]{46L05, 46L10, 03E40}
\keywords{\AWstar-algebra, normality, Boolean-valued analysis, projection lattice}

\author{Jananan Arulseelan and James E. Hanson}
\address{Department of Mathematics, Iowa State University, 396 Carver Hall, 411 Morrill Road, Ames, IA 50011, USA}
\email{jananan@iastate.edu}
\urladdr{https://sites.google.com/view/jananan-arulseelan}

\address{Department of Mathematics, Iowa State University, 396 Carver Hall, 411 Morrill Road, Ames, IA 50011, USA}
\email{jameseh@iastate.edu}
\urladdr{https://james-hanson.github.io/}

\begin{document}

\begin{abstract}
Using set-theoretic methods, we prove that every \AWstar-algebra is normal, resolving a question of Wright that has stood open for 46 years.  This was previously known in the case of \AWstar-factors, by work of Sait\^o and Wright. We also show that if there is a model $V$ of $\mathrm{ZFC}$ with an \AWstar-algebra that fails to be monotone complete, then in some forcing extension of $V$ there is an \AWstar-factor that fails to be monotone complete, implying that any $\mathrm{ZFC}$ proof that all \AWstar-factors are monotone complete yields a $\mathrm{ZFC}$ proof that all \AWstar-algebras are monotone complete. Both results build on transfer principles for Boolean-valued factor representations originally developed by Ozawa. This work was assisted by the Danus LLM orchestration system. 
\end{abstract}

\maketitle

\section{Introduction}\label{sec:introduction}

\subsection{Statements of Results}

Kaplansky introduced \AWstar-algebras in 1951 as an algebraic analogue of \wstar-algebras: the idea is to capture the good order-theoretic behaviour of von Neumann algebras purely algebraically, without reference to a representing Hilbert space or to topology. 

We use the Baer $*$-ring formulation of an \AWstar-algebra.

\begin{defn}
    A \cstar-algebra $A$ is an \AWstar-algebra if, for every subset $X\subseteq A$, there is a projection $e\in A$ such that
    \[
        R_A(X):=\{a\in A: xa=0 \text{ for every } x\in X\}=eA.
    \]
\end{defn}
In line with the motivation stated above, every von Neumann algebra satisfies the annihilator condition (a fact one can prove using the Double Commutant Theorem).  Note that \AWstar-algebras are always unital, and that the projection lattice $\Proj(A)$ of an \AWstar-algebra is always complete.

Kaplansky's original definition of \AWstar-algebras \cite{Kap51} differs from ours slightly, but is equivalent. He defined a \cstar-algebra $A$ to be an \AWstar-algebra if every set of orthogonal projections in $A$ admits a supremum in the partial order on projections and every MASA in $A$ is generated by its projections.  It can be shown that both conditions can be replaced by the condition that every MASA is monotone complete \cite[Proposition 1.4, Theorem 1.5]{SW15}.  An upward-directed family is understood to be nonempty.  

\begin{defn}\label{defn:monotone-complete}
    A \cstar-algebra $A$ is \textbf{monotone complete} if every norm-bounded upward-directed family in $A_{\mathrm{sa}}$ has a least upper bound in $A_{\mathrm{sa}}$.
\end{defn}

We phrase it in terms of norm-bounded families because that is the form in which it is used in Section~\ref{sec:monotone}.

An important open problem that has been studied since near the beginning of the theory of \AWstar-algebras is the monotone completeness conjecture:

\begin{conj}\label{conj:monotone}
    Every \AWstar-algebra is monotone complete.
\end{conj}

The conjecture has a clear affirmative answer for \wstar-algebras and for commutative \AWstar-algebras.  Various partial affirmative answers have been obtained \cite{AM08}, \cite{CP84}.

We use the following notion of normality, due to Wright (see \cite{W80}, \cite{SW91}, and \cite[Definition~5.12]{Gow26}).

\begin{defn}\label{defn:normal}
    An \AWstar-algebra $A$ is \textbf{normal} if, for every upward-directed family $(p_i)_{i\in I}$ in $\Proj(A)$, its join
    \[
        p=\bigvee\nolimits_{\Proj(A)}(p_i)_{i\in I}
    \]
    is the least upper bound of the family in the self-adjoint order $A_{\mathrm{sa}}$.
    Equivalently, if $a\in A_{\mathrm{sa}}$ and $a\geq p_i$ for every $i\in I$, then $a\geq p$.
\end{defn}

In other words, normality asks that the projection-lattice join and the self-adjoint-order supremum agree on upward-directed families of projections.  Given how closely these two orders are related, one might expect this to be free.  It is not.

Normality was introduced by Wright \cite{W80} as a technically weaker substitute for monotone completeness.  In the same paper, he shows that every \AWstar-algebra of finite type is normal; see also \cite{Sai81}.  Sait\^o and Wright later showed, in 1991, that every \AWstar-factor is normal (see Theorem~\ref{thm:factor-normality} below).  With this notion in hand, we can now state the first of our two main results.

\begin{maintheorem}\label{thm:normality}
    Every \AWstar-algebra is normal.
\end{maintheorem}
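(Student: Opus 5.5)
The plan is to reduce Theorem~\ref{thm:normality} to the factor case, Theorem~\ref{thm:factor-normality} of Sait\^o and Wright, by Boolean-valued analysis over the center. Let $A$ be an \AWstar-algebra with center $Z$. Since $Z$ is a commutative \AWstar-algebra, $\Proj(Z)$ is a complete Boolean algebra $\mathbb B$. Following Ozawa's Boolean-valued factor representation, inside $V^{\mathbb B}$ we obtain an object $\tilde A$ which $V^{\mathbb B}$ believes is an \AWstar-factor. Elements of $A$ correspond to the norm-bounded ``$Z$-valued'' elements of $\tilde A$, and projections of $A$ correspond exactly to projections of $\tilde A$. Since $\mathrm{ZFC}$ proves Theorem~\ref{thm:factor-normality}, it holds inside $V^{\mathbb B}$, so $\tv{\tilde A \text{ is normal}}=1$.

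To transfer normality back, take an upward-directed family $(p_i)_{i\in I}$ in $\Proj(A)$ with join $p$, and a self-adjoint $a\in A$ with $a\ge p_i$ for all $i$. Form the check-name of $I$ and the name $(\tilde p_i)_{i\in\check I}$, which is upward-directed in $V^{\mathbb B}$. The transfer steps are as follows.
\begin{enumerate}[label=(\roman*)]
\item The order transfers: $a\ge b$ in $A_{\mathrm{sa}}$ if and only if $\tv{\tilde a\ge\tilde b}=1$. This holds because positivity is witnessed by square roots, and these are obtained by gluing along central partitions.
\item The join transfers: $\tv{\tilde p=\bigvee_i \tilde p_i}=1$, i.e.\ the projection lattice of $\tilde A$ computed in $V^{\mathbb B}$ restricts to $\Proj(A)$ compatibly with suprema over ground-model index sets.
\end{enumerate}
With these in hand, normality of $\tilde A$ inside $V^{\mathbb B}$ gives $\tv{\tilde a\ge\tilde p}=1$, and (i) then yields $a\ge p$.

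The main obstacle is step (ii), together with making sure the representation really produces an \AWstar-factor in $V^{\mathbb B}$, i.e.\ that annihilators in $\tilde A$ are generated by projections. For (ii), I would argue directly. Suppose $\tilde q\in\Proj(\tilde A)$ satisfies $\tv{\tilde q\ge \tilde p_i}\ge b$ for all $i$. By mixing, $\tilde q$ comes from a genuine projection $q\in A$. Then $zq\ge zp_i$ for the central projection $z$ corresponding to $b$, hence $zq\ge zp$. Here one needs that multiplication by a central projection preserves joins in $\Proj(A)$, which is standard: $\Proj(zA)$ is the interval below $z$ and $x\mapsto zx$ is a lattice homomorphism $\Proj(A)\to\Proj(zA)$ when $z$ is central.

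For the \AWstar\ property, given a name for a subset $X$ of $\tilde A$, I would replace it by the ground-model set $X_0$ of all elements of $A$ that are locally in $X$ on some central piece. I would then check that the ground-model annihilator projection $e$ of $X_0$ witnesses $\tv{R(X)=e\tilde A}=1$. This uses the fact that an annihilator computed on pieces $zA$ glues.

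A subtle point is that the norm on $\tilde A$ is real-valued only inside $V^{\mathbb B}$. I expect Ozawa's construction, cited earlier, to handle this by taking the completion in $V^{\mathbb B}$; the transfers above only use bounded elements, so they should be unaffected.
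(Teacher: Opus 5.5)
Your argument is correct. Its skeleton is the paper's: Ozawa's representation over the full center, transfer of Theorem~\ref{thm:factor-normality}, then exact descent of order. The genuine difference is how you identify the internal join.

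The paper gets $\tv{\bigvee_{\Proj(M)}\dot D=\widehat p}=1$ from Lemma~\ref{lem:annihilator}. It applies the internal \AWstar\ axiom to $\dot D$, shows that the resulting internal generator equals $\widehat e$ where $R_A(D)=eA$, and then uses $e=1-p$.

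You instead check directly that $\tilde p$ is the internal least projection upper bound. An internal projection bounding the $\tilde p_i$ on a band $b$ descends, after mixing with $0$ off $b$, to some $q\in\Proj(A)$ with $zq\ge zp_i$ for all $i$. Since $e\mapsto ze$ preserves arbitrary joins for central $z$, this gives $zq\ge zp$. (If $f\le z$ bounds the $zp_i$, then $f+(1-z)$ bounds the $p_i$.)

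Your route is more elementary and never uses the internal \AWstar\ axiom. The paper's route buys the stronger fact that annihilator generators themselves transfer exactly.

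Likewise, indexing the family by $\check I$ lets directedness and the upper-bound property ascend through the standard identity $\tv{\forall i\in\check I\,\varphi(i)}=\bigwedge_{i\in I}\tv{\varphi(\check i)}$. That identity absorbs the mixed indices which Lemma~\ref{lem:directed-ascent} handles by explicit refinement and mixing.

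Two details still need to be written out.
\begin{enumerate}
\item The localized form of (i): $b\le\tv{\tilde p_i\le\tilde q}$ if and only if $zp_i\le zq$. This follows from (i) by mixing $\tilde q$ on $b$ with the unit off $b$.
\item If you verify the internal \AWstar\ axiom yourself rather than cite Ozawa, take $X_0=\{x\in A:\tv{\tilde x\in X}=1\}$, or the cut-downs $zx$ with $z\le\tv{\tilde x\in X}$. Elements that lie in $X$ only on some central piece shrink the annihilator. For example, with $A=\C^2$ and $X$ the internal $\{0\}$, they include $(1,0)$ and $(0,1)$, which gives $e=0$ instead of $1$.
\end{enumerate}
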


This resolves a question posed by Wright \cite{W80} in the affirmative.  Interestingly, immediately after posing the question, Wright posits, due to the seemingly poor behaviour of \AWstar-algebras, that the answer is probably negative, a guess that this paper does not bear out.

Previously, Ara-Goldstein \cite{AraGoldstein95} showed, via different methods, that Rickart \cstar-algebras (a class containing all \AWstar-algebras) are all $\sigma$-normal (a weakening of normality where the directed family of projections is assumed countable). Ara-Mathieu \cite[Theorem 5.6]{AM08} showed that every $\sigma$-finite \AWstar-algebra is normal.

Berberian \cite[Theorem 3]{Ber83} proved that an \AWstar-algebra is a \wstar-algebra if and only if it is normal and has a large \wstar-corner.  Here ``large'' refers to a corner with respect to a faithful projection (one with central support equal to the identity).  Since Theorem~\ref{thm:normality} removes the first of these two hypotheses, we have an immediate corollary.

\begin{cor}\label{cor:large-corner}
    An \AWstar-algebra $A$ is a \wstar-algebra if and only if it has a large \wstar-corner, that is, if and only if there is a faithful projection $p\in A$ such that $pAp$ is a \wstar-algebra.
\end{cor}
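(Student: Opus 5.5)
The corollary follows by combining Theorem~\ref{thm:normality} with Berberian's characterization \cite[Theorem 3]{Ber83}. Berberian's result says that an \AWstar-algebra is a \wstar-algebra if and only if it is normal and has a large \wstar-corner. Theorem~\ref{thm:normality} makes the normality hypothesis automatic. No new analysis is required; the work is to check that the two directions and the unpacking of ``large corner'' line up.

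For the forward direction, suppose $A$ is a \wstar-algebra. Take $p=1$. Its central support is $1$, so $p$ is faithful, and $pAp=A$ is a \wstar-algebra. Hence $A$ has a large \wstar-corner. (This direction is also contained in Berberian's theorem, but it is trivial directly.)

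For the converse, suppose $A$ is an \AWstar-algebra with a faithful projection $p$ such that $pAp$ is a \wstar-algebra. By Theorem~\ref{thm:normality}, $A$ is normal. So $A$ is a normal \AWstar-algebra with a large \wstar-corner, and \cite[Theorem 3]{Ber83} gives that $A$ is a \wstar-algebra.

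The second ``if and only if'' in the statement, between having a large \wstar-corner and having a faithful $p$ with $pAp$ a \wstar-algebra, is just the definition of ``large'' recalled before the corollary. The one point needing care is that our conventions agree with Berberian's. Specifically, his ``faithful'' means central support $1$, and his notion of normality coincides with Definition~\ref{defn:normal}, i.e.\ Wright's notion for upward-directed families of projections. If Berberian's notion were formulated differently, for example for orthogonal families or increasing nets, I would note that an increasing net of projections is an upward-directed family. Then Definition~\ref{defn:normal} implies his hypothesis, which is the only direction we need.
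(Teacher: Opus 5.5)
Your proposal is correct and follows the paper's proof exactly: take $p=1$ for the forward direction, and for the converse obtain normality from Theorem~\ref{thm:normality} and then apply \cite[Theorem 3]{Ber83}. Your extra check that Definition~\ref{defn:normal} implies Berberian's normality hypothesis (an increasing net of projections is an upward-directed family) is a reasonable consistency check that the paper leaves implicit.
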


\begin{proof}
    If $A$ is a \wstar-algebra, take $p=1$.  Conversely, $A$ is normal by Theorem~\ref{thm:normality}, so \cite[Theorem 3]{Ber83} applies.
\end{proof}

Our second main result, whose proof follows a similar path to the first, reduces finding a $\mathrm{ZFC}$-proof of the monotone completeness conjecture to finding a $\mathrm{ZFC}$-proof of the \AWstar-factor case at the level of consistency strength.

\begin{maintheorem}\label{thm:monotone-reduction}
    There exists a model of $\mathrm{ZFC}$ containing an \AWstar-algebra that is not monotone complete if and only if there exists a model of $\mathrm{ZFC}$ containing an \AWstar-factor that is not monotone complete.
\end{maintheorem}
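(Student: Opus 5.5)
One direction is trivial: an \AWstar-factor is an \AWstar-algebra, so a model of $\mathrm{ZFC}$ with a non-monotone-complete \AWstar-factor is already a model with a non-monotone-complete \AWstar-algebra. For the converse, I plan to start from a model $V$ containing an \AWstar-algebra $A$ that is not monotone complete. I will pass to a forcing extension in which a suitable ``factor piece'' of $A$ becomes an honest \AWstar-factor that still fails to be monotone complete. The natural forcing is the one given by the complete Boolean algebra $B=\Proj(Z(A))$ of central projections, which is complete because $A$ is an \AWstar-algebra.

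The first step is to build the Boolean-valued factor representation, following Ozawa. In $V^B$ the algebra $A$ gives rise to an object $\tilde A$: elements are (equivalence classes of) elements of $A$, and norms and order relations are $B$-valued, with $\tv{a=b}$ the largest central projection $z$ such that $za=zb$. Completing appropriately gives a \cstar-algebra in $V^B$. Take a $V$-generic ultrafilter $G\subseteq B$ and let $A_G=\tilde A^G$ be the interpretation in $V[G]$; concretely, $A_G$ is the quotient of $A$ by the ideal generated by the central projections outside $G$, completed. The transfer principle I need has three parts: $A_G$ is an \AWstar-algebra in $V[G]$; its center is trivial, because central projections in $V[G]$ come from $B$ and $G$ decides each of them, so $A_G$ is a factor; and projections, order, and norm-bounded directed families in $A_G$ lift back to $B$-names, that is, to ``mixtures'' of elements of $A$ along partitions of unity in $B$.

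The second step transfers the failure of monotone completeness. Fix a norm-bounded upward-directed family $(a_i)_{i\in I}$ in $A_{\mathrm{sa}}$ with no least upper bound. I want to find a central projection $z$ such that $z$ forces ``$(a_i)$ has no supremum in $\tilde A$''. Suppose instead that densely many $z$ force a supremum to exist. Then by fullness (maximum principle) of $V^B$, I can pick names for suprema below the members of a maximal antichain of such $z$. Gluing them with the \AWstar\ property (sums of orthogonal central pieces exist, since $A$ is a product over an orthogonal partition of central projections) yields an element $a\in A$. The task is then to check that $a$ is the least upper bound in $V$. That $a\ge a_i$ holds centrally piecewise and hence globally. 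Any other upper bound $b$ satisfies $b\ge a$ on each piece, so $b\ge a$. This contradicts the choice of $(a_i)$. Forcing below such a $z$, that is, taking $G\ni z$, then gives the desired factor in $V[G]$.

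The main obstacle is the first step, specifically showing that $A_G$ is genuinely an \AWstar-algebra in $V[G]$. In $V[G]$ there are new subsets $X\subseteq A_G$ and new directed families that are not in $V$. Their annihilators must be computed through names, and an arbitrary name is a subset of $A$ mixed along arbitrarily many central partitions. My approach here is to reduce a name for $X$ to a subset of $A$ in $V$ by taking $\{za : z\Vdash \check a\in\dot X\}$. I will then show that the annihilating projection of this set in $A$, read off in $A_G$, generates $R(X)$. This requires that the quotient by $G$ commutes with annihilators, i.e.\ that $\tv{xa=0}$ behaves well, which I expect holds via central supports. The same naming argument, already used in step two, handles directed families. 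I would lean on Ozawa's transfer results as the paper indicates rather than re-derive completeness of the norm-unit-ball mixtures.
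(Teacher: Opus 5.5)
Your proposal is correct and follows essentially the paper's route. The steps match: Ozawa's full-center factor representation (Corollary~\ref{cor:factor-rep}); an internal supremum of the ascended witness family, glued along a central partition, descends to an external least upper bound (the internal-to-external half of Theorem~\ref{thm:monotone-invariance}, with Proposition~\ref{prop:upper-bound-transfer}); and finally one restricts to a nonzero band on which the supremum fails, where you force with a generic $G\ni z$ and the paper instead passes to $V^{B\restr b}$ with $b=1-\tv{M\text{ is monotone complete}}$. One detail you should state explicitly is that the internal suprema you glue are bounded by the ground-model constant $K$, since internally $-K\le \widehat{a_{i_0}}\le u_j\le K\cdot 1$; this is what makes the central gluing of Proposition~\ref{prop:mixing} applicable.
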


One direction of this equiconsistency is trivial, since an \AWstar-factor is in particular an \AWstar-algebra.  The content is the other direction: if $\mathrm{ZFC}+$ ``there is an \AWstar-algebra that is not monotone complete'' is consistent, then so is $\mathrm{ZFC}+$ ``there is an \AWstar-factor that is not monotone complete.''  By the contrapositive, if $\mathrm{ZFC}$ proves that every \AWstar-factor is monotone complete, then $\mathrm{ZFC}$ proves Conjecture~\ref{conj:monotone}.  So a proof of the monotone completeness conjecture for factors, formalizable in $\mathrm{ZFC}$, is already a proof of the conjecture in general; and any consistency or independence proof for a counterexample is already one for a factor counterexample.

\subsection{Proof Outline}

Both theorems follow the same basic strategy, borrowed from Ozawa's Boolean-valued analysis of \AWstar-algebras.  The idea is to absorb the full center of $A$ into the Boolean-valued scalar field, so that the bounded-section representation (\cite[Theorem~5]{Oza86}) turns $A$ into the bounded global-section algebra of a single internal \AWstar-factor $M$.  Since factors are already well understood (the factor normality theorem \cite[Corollary~4.7]{SW91} settles normality outright, and failure of monotone completeness of factors is exactly the statement that our second theorem shows to be equiconsistent with the failure of the general conjecture), the real content of the paper is in showing that the specific properties we care about cross the Boolean-valued boundary cleanly, with nothing gained or lost in either direction.  Section~\ref{sec:transfer} carries this out for normality: central mixing, directed ascent, identification of the internal join, ascent of arbitrary self-adjoint upper bounds, and exact descent of order.  Section~\ref{sec:monotone} carries out the analogous crossing for monotone completeness, one level up in generality, at arbitrary bounded self-adjoint directed families rather than just projections.

We remark that our strategy of applying transfer principles and Boolean-valued set theory is far from new.  Indeed, Ozawa and Sait\^o \cite{OzawaSaito86} applied the same technique to internal von Neumann algebras to derive results about classes of so-called embeddable \AWstar-algebras (see also \cite{Oza85}).  Takeuti \cite{Tak83} used similar techniques to transfer statements from von Neumann factors to von Neumann algebras with center.  For a recent use of forcing over a measure algebra in a nearby operator-algebraic setting, where the global sections are the randomization $L^1(\Omega,M)$, see Farah and Vaccaro \cite{FV25}.  

\begin{rem}
    The scrupulous reader may ask why the conjecture remained open this long when the main inputs to our proof are over 30 years old.  Indeed, the onus is on the authors to explain why our results are not merely corollaries of Theorem \ref{thm:factor-normality} and the Ozawa machinery.  While we can only speculate, we point to a particular subtlety that obstructs a straightforward composition of results and that our proof addresses.  Ozawa states the template we follow in \cite[\S4]{Oza86}: if $\phi$ is a theorem on \AWstar-factors then $\tv{\phi}=1$ is again a theorem of $\mathrm{ZFC}$, and by his Theorem~5 this is equivalent to some statement about \AWstar-algebras with center $Z$.  The precise statement you get is not transparent.  For instance, it is not clear that external directedness passes to internal directedness (similarly for external joins and upper bounds) because this is about the semantics of the internal sets.  These internal sets are the result of mixing with Boolean algebra elements.  Thus, it is only after proving Lemma \ref{lem:directed-ascent}, Lemma \ref{lem:annihilator} and Lemma \ref{lem:descent-normality} that we can apply transfer to conclude Theorem \ref{thm:normality}.  These steps are each non-trivial to see.
    
\end{rem}

\subsection*{Acknowledgements}  The first author thanks Matthew Kennedy for pointing them to the possibility of a set-theoretic resolution to the monotone completeness conjecture, and for stimulating early conversations on the topic.  

\subsection*{AI disclosure}

The ideas for the proofs in this paper were found by using the Danus LLM orchestration system \cite{LiuEtAl26} (using Fable 5.1 and GPT 5.6 Sol) to explore set-theoretic approaches to problems around monotone completeness. This paper was written by the authors, who take full responsibility for its correctness.

\section{Background}\label{sec:background}

\subsection{Notation and Conventions}

We fix some notation used for the rest of the paper.  Throughout, $A$ denotes an \AWstar-algebra, $Z=Z(A)$ denotes its center, and $B=\Proj(Z)$ is the lattice of projections of the center.  The Scott--Solovay universe over $B$ is denoted $V^{B}$, Boolean truth values are written $\tv{\varphi}$, and once a bounded-section representation is fixed, $M$ denotes the internal algebra.  These conventions are in force in this section and in \S\ref{sec:introduction}.  The results of \S\ref{sec:transfer} and \S\ref{sec:monotone} are stated for an arbitrary complete Boolean algebra and an arbitrary internal algebra, and re-declare their hypotheses as they go.

\subsection{Boolean-valued models of set theory}\label{subsec:bvm}

Boolean-valued semantics generalizes the two-element algebra of truth values to an arbitrary complete Boolean algebra $B$.  From the ground model one builds a universe $V^{B}$ of $B$-valued sets, and each sentence of the language of set theory, with parameters drawn from $V^{B}$, is assigned a truth value $\tv{\varphi}\in B$; the sentences receiving value $1$ are the ones that hold in $V^{B}$.  When $B$ is the regular-open algebra of a forcing poset this is forcing, and $\tv{\varphi}$ is the largest condition forcing $\varphi$.  We shall not need a generic filter, and work with the algebra of truth values alone.

The universe $V^{B}$ is defined by recursion on rank.  Let $V^{B}_{\alpha}$ consist of the functions $u$ with $\dom(u)\subseteq\bigcup_{\beta<\alpha}V^{B}_{\beta}$ and $\ran(u)\subseteq B$, and let $V^{B}=\bigcup_{\alpha}V^{B}_{\alpha}$.  A \textbf{name} $u$ is to be thought of as a set together with, for each candidate member $w$, a Boolean value $u(w)$ recording the extent to which $w$ belongs to it.  Writing $a\Rightarrow c$ for $(1-a)\join c$, the atomic truth values are given by the simultaneous recursion
\[
    \tv{u\in v}=\bigvee_{w\in\dom(v)}\bigl(v(w)\meet\tv{u=w}\bigr),
\]
\[
    \tv{u=v}=\bigwedge_{w\in\dom(u)}\bigl(u(w)\Rightarrow\tv{w\in v}\bigr)
        \meet\bigwedge_{w\in\dom(v)}\bigl(v(w)\Rightarrow\tv{w\in u}\bigr),
\]
and extended to all formulas by $\tv{\neg\varphi}=1-\tv{\varphi}$, $\tv{\varphi\wedge\psi}=\tv{\varphi}\meet\tv{\psi}$, $\tv{\forall x\,\varphi(x)}=\bigwedge_{u\in V^{B}}\tv{\varphi(u)}$ and $\tv{\exists x\,\varphi(x)}=\bigvee_{u\in V^{B}}\tv{\varphi(u)}$.  The last two range over a proper class, but each is attained on a set of names, so the definition is legitimate; see \cite[\S1]{Bell05} or \cite[Ch.~14]{Jech03}.

Each set $x$ of the ground model has a \textbf{canonical name} $\check{x}$, defined by $\dom(\check{x})=\{\check{y}:y\in x\}$ with all values $1$.  We reserve the check for this operation and the hat for the passage, described in \S\ref{subsec:sections}, from an element of a global-section algebra to the name representing it.

We collect some facts we use.  They are all standard.

\begin{fact}\label{fact:bvm}
Let $B$ be a complete Boolean algebra.
\begin{enumerate}
    \item[(i)] \textup{(Equality and substitutivity)} $\tv{u=u}=1$ and $\tv{u=v}\meet\tv{v=w}\leq\tv{u=w}$, and for every formula $\varphi$,
    \[
        \tv{u=v}\meet\tv{\varphi(u)}\leq\tv{\varphi(v)} .
    \]
    \item[(ii)] \textup{(Scott--Solovay)} Every axiom of $\mathrm{ZFC}$ has Boolean truth value $1$ in $V^{B}$.
    \item[(iii)] \textup{(Transfer principle)} Boolean-valued semantics is sound for first-order logic, so by \textup{(ii)}, if $\mathrm{ZFC}\vdash\varphi$ then $\tv{\varphi}=1$.
    \item[(iv)] \textup{(Maximum principle)} For every formula $\varphi$ there is a name $u$ with $\tv{\exists x\,\varphi(x)}=\tv{\varphi(u)}$.
    \item[(v)] \textup{(Mixing principle)} For every partition $(b_i)_{i\in I}$ of $1$ in $B$ and every family of names $(u_i)_{i\in I}$ there is a name $u$ with $b_i\leq\tv{u=u_i}$ for all $i$; any two such names $u,u'$ satisfy $\tv{u=u'}=1$.
\end{enumerate}
\end{fact}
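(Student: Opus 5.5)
Since these facts are standard, the plan is to prove them in an order where each one feeds the next: first (i), then (v), then (iv), then (ii), and finally (iii). Throughout I would follow \cite[\S1]{Bell05} and \cite[Ch.~14]{Jech03}.

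For (i), I would start with the atomic cases. I would prove $\tv{u=u}=1$ and the transitivity inequality by simultaneous induction on the ranks of the names involved, unwinding the recursive definitions of $\tv{u\in v}$ and $\tv{u=v}$. Along the way I would also prove $\tv{u=v}\meet\tv{u\in w}\leq\tv{v\in w}$ and $\tv{v=w}\meet\tv{u\in v}\leq\tv{u\in w}$ by the same induction. The general substitutivity inequality $\tv{u=v}\meet\tv{\varphi(u)}\leq\tv{\varphi(v)}$ then follows by induction on the complexity of $\varphi$. The connectives are handled by Boolean algebra, using symmetry of $\tv{u=v}$ for the negation step. The quantifiers are handled because $\meet$ distributes over the relevant infinite joins and meets in a complete Boolean algebra.

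For (v), I would write the mixture down explicitly. Take $\dom(u)=\bigcup_i\dom(u_i)$ and $u(w)=\bigvee_i\bigl(b_i\meet\tv{w\in u_i}\bigr)$. To check $b_i\leq\tv{u=u_i}$, I would expand both halves of the definition of $\tv{u=u_i}$. The key point is that disjointness gives $b_i\meet u(w)=b_i\meet\tv{w\in u_i}$. Uniqueness comes from transitivity in (i): we have $\tv{u=u'}\geq\bigvee_i\bigl(\tv{u=u_i}\meet\tv{u_i=u'}\bigr)\geq\bigvee_i b_i=1$.

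For (iv), note that $\tv{\exists x\,\varphi(x)}$ is attained on a set of names $\{u_\xi:\xi<\kappa\}$, which I would well-order. I would then disjointify by setting $c_\xi=\tv{\varphi(u_\xi)}\meet\bigwedge_{\eta<\xi}(1-\tv{\varphi(u_\eta)})$, and add the remainder $1-\bigvee_\xi c_\xi$ paired with an arbitrary name, so that the $c_\xi$ form a partition of $1$. Mixing by (v) produces a name $u$. Then (i) gives $c_\xi\leq\tv{\varphi(u)}$ for each $\xi$, so $\tv{\varphi(u)}\geq\bigvee_\xi c_\xi=\tv{\exists x\,\varphi(x)}$.

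For (ii), I would verify the axioms one by one, exhibiting explicit names.
\begin{itemize}
\item Extensionality is immediate from the definition of $\tv{u=v}$.
\item For pairing, use $\dom=\{u,v\}$ with value $1$.
\item For union, take $\dom=\bigcup_{w\in\dom(u)}\dom(w)$ with values $\bigvee_w u(w)\meet w(z)$.
\item For power set, take as domain all functions $\dom(u)\to B$.
\item For separation, use $v(w)=u(w)\meet\tv{\varphi(w)}$.
\item For infinity, use $\check\omega$.
\item Foundation goes by $\in$-induction on rank.
\end{itemize}
For (iii), I would check that the logical axioms have value $1$ and that modus ponens and generalization preserve value $1$, with the equality axioms supplied by (i). This is routine Boolean algebra.

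The main obstacle I expect is in (ii), specifically Replacement (in its Collection form) and Choice. For Collection, I would use reflection in $V$. For each $w\in\dom(u)$, choose an ordinal $\alpha_w$ such that $\tv{\exists y\,\varphi(w,y)}$ is already attained by names in $V^{B}_{\alpha_w}$. The collecting name is then $V^{B}_{\sup_w\alpha_w}$ with all values equal to $1$. For Choice, I would well-order $\dom(u)$ in $V$, build the name of the corresponding ordinal-indexed surjection, and check internally that it covers $u$. This shows internally that every set is the image of an ordinal, and hence well-orderable.
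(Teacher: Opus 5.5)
Your proposal is correct. It is essentially the standard development in \cite[\S1]{Bell05} and \cite[Ch.~14]{Jech03}, which the paper does not reproduce but simply cites (Bell for (i), Takeuti--Zaring for (ii)--(v)): the explicit mixture $u(w)=\bigvee_i(b_i\meet\tv{w\in u_i})$, the maximum principle obtained from mixing plus a well-ordering in $V$, and the axioms checked by explicit names, with Collection via a rank bound and Choice via a name for a surjection from $\check\kappa$. The points you leave implicit are routine: the bounded-quantifier identity $\tv{\forall w\in u\,\psi(w)}=\bigwedge_{w\in\dom(u)}\bigl(u(w)\Rightarrow\tv{\psi(w)}\bigr)$; the $\Delta_0$-absoluteness of canonical names used for Infinity and Choice; and the fact that your disjointified $c_\xi$ still have join $\bigvee_\xi\tv{\varphi(u_\xi)}$, which is exactly Lemma~\ref{lem:refinement}.
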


\begin{proof}
For (i) see \cite[\S1]{Bell05}.  For (ii) and (iii) see \cite[Theorems~13.12 and~14.25]{TakeutiZaring73}, and for (iv) and (v) see \cite[Theorem~16.2 and Corollary~16.4]{TakeutiZaring73}.
\end{proof}

Part (iii) applies to theorems of $\mathrm{ZFC}$, and not to every sentence that happens to hold in the ground model.  If $V\models\mathsf{CH}$ and $B$ is the completion of the poset adding $\aleph_2$ Cohen reals, then $\mathsf{CH}$ holds in the ground model while $\tv{\mathsf{CH}}=0$.  We apply (iii) in \S\ref{sec:main-proof} to Theorem~\ref{thm:factor-normality}, which is a theorem of $\mathrm{ZFC}$, and once more in Remark~\ref{rem:direct-transfer} under an explicit provability hypothesis.

Two further pieces of Boolean algebra recur below.  The first replaces a covering by a partition, which is what lets us analyse an arbitrary name lying in an ascended set piece by piece.

\begin{lem}[Disjoint refinement]\label{lem:refinement}
Let $B$ be a complete Boolean algebra, let $(c_i)_{i\in I}$ be a family in $B$, and let $b\leq\bigvee_{i}c_i$.  Then there is a pairwise disjoint family $(b_i)_{i\in I}$ with $b_i\leq b\meet c_i$ for every $i$ and $\bigvee_i b_i=b$.  Given two such families over the same $b$, their pairwise meets form a common refinement.
\end{lem}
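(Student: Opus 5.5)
The natural route is a transfinite recursion that peels off disjoint pieces of $b$. Well-order the index set $I$; since pairwise meets and disjointness do not depend on the indexing, we may take $I$ to be an ordinal. For each $i\in I$ put
\[
    b_i \;=\; b\meet c_i\meet\Bigl(1-\bigvee\nolimits_{j<i}c_j\Bigr),
\]
where the join over the empty set is $0$. This is the standard disjointification: $b_i$ is the part of $b$ lying under $c_i$ but under no earlier $c_j$. By construction $b_i\leq b\meet c_i$. For $j<i$ we have $b_i\leq 1-c_j$ and $b_j\leq c_j$, so $b_i\meet b_j=0$, which gives pairwise disjointness.

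The main point is that $\bigvee_i b_i=b$, and here the hypothesis $b\leq\bigvee_i c_i$ is used. Set $d_i=\bigvee_{j\leq i}c_j$. I will show by transfinite induction that
\[
    \bigvee\nolimits_{j\leq i}b_j = b\meet d_i .
\]
\textbf{Successor step.} Assuming the claim below $i$, and writing $e=\bigvee_{j<i}c_j$, we have $\bigvee_{j<i}b_j=b\meet e$. (At limits this inner identity follows by taking joins of the inductive hypotheses, since $e=\bigvee_{j<i}d_j$ and meets with $b$ distribute over arbitrary joins in a complete Boolean algebra, which is the infinite distributive law.) Then
\[
    (b\meet e)\join\bigl(b\meet c_i\meet(1-e)\bigr) = b\meet\bigl(e\join (c_i\meet(1-e))\bigr) = b\meet(e\join c_i) = b\meet d_i .
\]
Taking the join over all $i$ yields $\bigvee_i b_i=b\meet\bigvee_i c_i=b$. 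The only real care needed is to invoke infinite distributivity correctly at limit stages; I expect that to be the sole nontrivial step.

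\textbf{Common refinement.} Let $(b_i)$ and $(b'_j)$ be two such families over the same $b$. The meets $b_i\meet b'_j$ are pairwise disjoint, because two distinct index pairs differ in at least one coordinate, where the corresponding factors are already disjoint. Each satisfies $b_i\meet b'_j\leq b_i\leq b\meet c_i$, and similarly lies below $b\meet c_j$. Two applications of distributivity give
\[
    \bigvee\nolimits_{i,j}\bigl(b_i\meet b'_j\bigr) = \bigvee\nolimits_i\Bigl(b_i\meet\bigvee\nolimits_j b'_j\Bigr) = \bigvee\nolimits_i\bigl(b_i\meet b\bigr) = b .
\]
So the meets refine both families, which completes the proof.
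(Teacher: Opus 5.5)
Your proof is correct and follows the paper's route: the same disjointification $b_i=b\meet c_i\meet\bigl(1-\bigvee_{j<i}c_j\bigr)$ over a well-ordering of $I$, with the same disjointness check. The only difference is in presentation. You verify $\bigvee_i b_i=b$ by transfinite induction on the partial joins $\bigvee_{j\leq i}b_j=b\meet\bigvee_{j\leq i}c_j$, whereas the paper argues by contradiction from the least $i$ for which $c_i$ meets the residue $b\meet\bigl(1-\bigvee_i b_i\bigr)$; both rest on the same well-foundedness and infinite distributivity. You also spell out the common-refinement clause that the paper leaves as immediate.
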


\begin{proof}
Well-order $I$ and put $b_i=b\meet c_i\meet\bigl(1-\bigvee_{j<i}c_j\bigr)$.  If $j<i$ then $b_i\leq 1-c_j$ while $b_j\leq c_j$, so the family is pairwise disjoint.  Write $d=\bigvee_i b_i$ and suppose $e:=b\meet(1-d)$ were nonzero.  Since $e\leq b\leq\bigvee_i c_i$ there is a least $i$ with $e\meet c_i\neq 0$.  Minimality gives $e\meet c_j=0$, and hence $e\leq 1-c_j$, for every $j<i$, so $e\leq 1-\bigvee_{j<i}c_j$ by completeness.  Then $0\neq e\meet c_i\leq b\meet c_i\meet\bigl(1-\bigvee_{j<i}c_j\bigr)=b_i\leq d$, which contradicts $e\leq 1-d$.  The last assertion is immediate.
\end{proof}

The second lets us localize an argument to a band of $B$.  For $b\in B$ the principal ideal $B\!\restr\!b=\{c\in B:c\leq b\}$ is again a complete Boolean algebra, with $b$ as its greatest element.  Every $B$-name $u$ restricts to a $B\!\restr\!b$-name $u\!\restr\!b$, by the recursion $\dom(u\!\restr\!b)=\{w\!\restr\!b:w\in\dom(u)\}$ and $(u\!\restr\!b)(w\!\restr\!b)=b\meet u(w)$.

\begin{fact}[Restriction]\label{fact:restriction}
Let $b\in B$ be nonzero.  For every formula $\varphi$ and all names $u_1,\dots,u_n$,
\[
    \tv{\varphi(u_1\!\restr\!b,\dots,u_n\!\restr\!b)}^{V^{B\restr b}}
        = b\meet\tv{\varphi(u_1,\dots,u_n)}^{V^{B}} .
\]
In particular the restricted statement has truth value $b$, the unit of $B\!\restr\!b$, exactly when $b\leq\tv{\varphi}$ in $V^{B}$, and truth value $0$ exactly when $b\meet\tv{\varphi}=0$.
\end{fact}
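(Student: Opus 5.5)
We prove Fact~\ref{fact:restriction} by induction on the complexity of $\varphi$. The atomic cases $u\in v$ and $u=v$ are handled first, by a simultaneous induction on the ranks of the names. The connectives and quantifiers are then handled using that $c\mapsto b\meet c$ is a complete Boolean homomorphism from $B$ onto $B\!\restr\!b$. Throughout, write $\tv{\cdot}_b$ for truth values in $V^{B\restr b}$. Complements in $B\!\restr\!b$ are relative complements $b-c$, so implication there is $c\Rightarrow_b d=(b-c)\join d$. For $c,d\in B$,
\[
  (b\meet c)\Rightarrow_b(b\meet d)=b\meet(c\Rightarrow d).
\]

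\textbf{Atomic formulas.} We show simultaneously, by induction on the pair of ranks of $(u,v)$, that $\tv{u\!\restr\!b\in v\!\restr\!b}_b=b\meet\tv{u\in v}$ and $\tv{u\!\restr\!b=v\!\restr\!b}_b=b\meet\tv{u=v}$.

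For membership, the recursion gives
\[
  \tv{u\!\restr\!b\in v\!\restr\!b}_b=\bigvee_{w\in\dom(v)}\bigl(b\meet v(w)\meet\tv{u\!\restr\!b=w\!\restr\!b}_b\bigr).
\]
By the inductive hypothesis this equals $b\meet\bigvee_w\bigl(v(w)\meet\tv{u=w}\bigr)$, since meets with $b$ distribute over arbitrary joins.

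One subtlety arises here. The map $w\mapsto w\!\restr\!b$ may fail to be injective on $\dom(v)$, so the value $(v\!\restr\!b)(w')$ should be read as the join of $b\meet v(w)$ over all $w$ restricting to $w'$. Either the definition is taken this way, or one notes that the joins agree regardless. In both readings the indexed join above is correct.

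Equality is analogous. The two infima over $\dom(u)$ and $\dom(v)$ are transported using the implication identity above, together with the fact that meets with $b$ commute with arbitrary meets in a complete Boolean algebra: $b\meet\bigwedge_i c_i=\bigwedge_i(b\meet c_i)$.

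\textbf{Connectives.} Negation uses $\tv{\neg\psi}_b=b-\tv{\psi}_b=b-(b\meet\tv{\psi})=b\meet(1-\tv{\psi})$. Conjunction is immediate.

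\textbf{Quantifiers.} This is the main obstacle. We need
\[
  \bigvee_{x\in V^{B\restr b}}\tv{\psi(x,\bar u\!\restr\!b)}_b=b\meet\bigvee_{y\in V^{B}}\tv{\psi(y,\bar u)}.
\]
The inequality $\geq$ follows from the inductive hypothesis by taking $x=y\!\restr\!b$. For $\leq$, we need every $B\!\restr\!b$-name $x$ to be of the form $y\!\restr\!b$ up to Boolean equality. So we show that every $B\!\restr\!b$-name is itself a $B$-name, since $B\!\restr\!b\subseteq B$ and the construction is by recursion on rank. We then check by $\in$-induction that $x\!\restr\!b=x$ for such names, because $b\meet x(w)=x(w)$. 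Taking $y=x$ gives the bound. The universal quantifier follows by duality, or by the same argument with meets, using $b\meet\bigwedge=\bigwedge(b\meet\cdot)$.

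The final assertions are then immediate. The value $b$ is the top of $B\!\restr\!b$, and $b\meet\tv{\varphi}=b$ if and only if $b\leq\tv{\varphi}$.
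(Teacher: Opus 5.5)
Your proof is correct and is the standard argument. The paper states this Fact without proof; the one ingredient it does verify explicitly, in the proof of Lemma~\ref{lem:band-sections}, is that every $B\!\restr\!b$-name is a $B$-name literally fixed by restriction, and that is exactly what you use for the $\leq$ half of the quantifier step. Your non-injectivity caveat is also well taken. As written, the paper's recursion $(u\!\restr\!b)(w\!\restr\!b)=b\meet u(w)$ is ill-defined when some $w_1,w_2\in\dom(u)$ have the same restriction but $b\meet u(w_1)\neq b\meet u(w_2)$. Reading the value as $\bigvee\{b\meet u(w):w\!\restr\!b=w'\}$, as you do, repairs this without affecting the atomic computation or the fixed-point property of $B\!\restr\!b$-names.
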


Passing to $B\!\restr\!b$ is the Boolean-valued form of restricting attention to those generic filters that contain $b$, which is why truth values are cut down by $b$.  We suppress restriction from the notation and write $u$ for $u\!\restr\!b$.  Fact~\ref{fact:restriction} is used twice in \S\ref{sec:monotone}, once to pin down the largest band on which monotone completeness holds and once, in the proof of Theorem~\ref{thm:monotone-reduction}, to move to a band on which it fails outright.

We record one instance of the transfer principle explicitly.  The Archimedean property of the reals is a theorem of $\mathrm{ZFC}$, so if $R$ is a name with $\tv{R\in\R}=1$ then
\begin{equation}\label{eq:archimedean}
    \bigvee_{n\in\N}\tv{R\leq\check{n}}=1 .
\end{equation}
This is what allows an internally bounded family to be sliced into countably many bands, on each of which a ground-model bound is available.

A reader accustomed to forcing may expect the arguments below to depend on a chain condition on $B$.  They do not.  The worry is a reasonable one: we shall pass repeatedly between an upward-directed family in an algebra and its internal counterpart, and one might expect the latter to be directed differently, or to have a different cofinality, from the family it came from.  Everything we use about $V^{B}$ is contained in Fact~\ref{fact:bvm}, Lemma~\ref{lem:refinement} and Fact~\ref{fact:restriction}, all of which hold for an arbitrary complete Boolean algebra, and every construction below chooses its witnesses cell by cell over a partition of $B$ and mixes them.  No cardinality-preservation or cofinality-preservation hypothesis appears anywhere in this paper.

\subsection{Bounded global sections}\label{subsec:sections}

Boolean-valued analysis is useful here because an internal normed structure can be pushed back down to an ordinary one.  Let $N$ be a name with $\tv{N\text{ is a \cstar-algebra}}=1$, all norms being computed internally.  A name $u$ is a \textbf{bounded section} of $N$ if $\tv{u\in N}=1$ and $\tv{\|u\|\leq\check{K}}=1$ for some ground-model real $K$.  Two bounded sections are identified when $\tv{u=v}=1$, and the set of resulting classes is the \textbf{bounded global-section algebra} $A$ of $N$.  For $x\in A$ we write $\widehat{x}$ for a name representing it and call $\widehat{x}$ the \textbf{ascent} of $x$; conversely $x$ is the \textbf{descent} of $\widehat{x}$.  The operations of $A$ are defined by descent.  For instance $x+y$ is the unique element of $A$ with $\tv{\widehat{x+y}=\widehat{x}+\widehat{y}}=1$, which exists by Fact~\ref{fact:bvm}(iv) and is unique by the identification just made.  The norm is
\[
    \|x\|=\inf\{\alpha\in\R:\tv{\|\widehat{x}\|\leq\check{\alpha}}=1\} .
\]
This is Ozawa's construction \cite[\S2]{Oza86}, following Takeuti \cite{Tak83b}, and $A$ is again a \cstar-algebra.

Two points deserve emphasis before we go on.  First, boundedness is by a ground-model real.  An internal element of $N$ whose norm is bounded only by some internal real is not a bounded section, and this is the one place where the passage between $N$ and $A$ is not transparent.  It is the reason for the band decomposition of Theorem~\ref{thm:directed-band-form}.  Second, the scalars of $N$ are the complex numbers as computed in $V^{B}$, which properly contain $\check{\C}$.  The bounded global sections of the internal complex field therefore form a commutative \AWstar-algebra rather than a copy of $\C$.  We denote it by $Z_{B}$.  By \cite[Theorem~3.6 and Corollary~3.7]{Oza84} it is $*$-isomorphic to any commutative \AWstar-algebra whose projection lattice is isomorphic to $B$, and Lemma~\ref{lem:commutative} below reproves this in the form we use.

Ascent and descent are exact for everything we shall need.

\begin{fact}\label{fact:exactness}
Let $N$ be an internal \cstar-algebra with bounded global-section algebra $A$, and let $x,y\in A$.
\begin{enumerate}
    \item[(i)] $x=y$ if and only if $\tv{\widehat{x}=\widehat{y}}=1$, and the algebraic operations and the involution commute with ascent with Boolean truth value $1$.
    \item[(ii)] For $\alpha\in\R$, $\|x\|\leq\alpha$ if and only if $\tv{\|\widehat{x}\|\leq\check{\alpha}}=1$.
    \item[(iii)] The element $x$ is self-adjoint, positive, or a projection if and only if we have $\tv{\widehat{x}\text{ is self-adjoint}}=1$, $\tv{\widehat{x}\text{ is positive}}=1$, or $\tv{\widehat{x}\text{ is a projection}}=1$ respectively.  Consequently $x\leq y$ in $A_{\mathrm{sa}}$ if and only if $\tv{\widehat{x}\leq\widehat{y}}=1$.
    \item[(iv)] Every name $u$ with $\tv{u\in N}=1$ and $\tv{\|u\|\leq\check{K}}=1$ for some ground-model real $K$ is the ascent of a unique element of $A$.
\end{enumerate}
\end{fact}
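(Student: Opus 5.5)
The plan is to prove the four items in the order (iv), (i), (ii), (iii), since each rests on the previous ones. Throughout I use only the definition of $A$ as the set of $\tv{\cdot=\cdot}$-classes of bounded sections, together with Fact~\ref{fact:bvm}.

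For (iv), a name $u$ with $\tv{u\in N}=1$ and $\tv{\|u\|\leq\check K}=1$ is by definition a bounded section. Its class is therefore an element $x\in A$ with $\widehat{x}=u$ up to truth value $1$. Uniqueness is exactly the identification $\tv{u=v}=1$.

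For (i), the equivalence $x=y\iff\tv{\widehat x=\widehat y}=1$ is again the definition of the classes. Compatibility with the operations is how the operations were defined. For a sum, the maximum principle (Fact~\ref{fact:bvm}(iv)) gives a name $w$ with $\tv{w=\widehat x+\widehat y}=1$. Since $\tv{\|w\|\leq\check K_x+\check K_y}=1$ by the internal triangle inequality, $w$ is a bounded section and descends by (iv). The same argument handles products (bound $K_xK_y$), scalar multiples and the involution. Well-definedness on classes follows from substitutivity, Fact~\ref{fact:bvm}(i).

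For (ii), one direction is immediate from the definition of $\|x\|$ as an infimum. Conversely, suppose $\|x\|\leq\alpha$. Then for each $n\in\N$ there is $\beta_n\leq\alpha+1/n$ with $\tv{\|\widehat x\|\leq\check\beta_n}=1$, so $\tv{\|\widehat x\|\leq(\alpha+1/n)\check{}}=1$. Taking the meet over $n$ gives
\[
\tv{\forall n\in\check\N\;\|\widehat x\|\leq\check\alpha+1/n}=\bigwedge_n\tv{\|\widehat x\|\leq(\alpha+1/n)\check{}}=1 .
\]
This uses that $\check\N$ has domain $\{\check n\}$ and that $\check{}$ commutes with the arithmetic of ground-model rationals. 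By transfer (Fact~\ref{fact:bvm}(iii)) of the ZFC theorem ``if $r\leq a+1/n$ for all $n$ then $r\leq a$'', we get $\tv{\|\widehat x\|\leq\check\alpha}=1$.

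For (iii), self-adjointness and being a projection are the equations $x=x^*$ and $x=x^*=x^2$. These transfer in both directions by (i).

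Positivity is the step I expect to be the main obstacle, since it involves an existential quantifier internally. If $x=y^*y$ with $y\in A$, then $\tv{\widehat x=\widehat y^*\widehat y}=1$ by (i), so $\widehat x$ is internally positive. Conversely, suppose $\tv{\widehat x\text{ is positive}}=1$. The internal continuous functional calculus (a ZFC theorem, so it holds with value $1$) together with the maximum principle yields a name $s$ with $\tv{s\in N\wedge s=s^*\wedge s^2=\widehat x}=1$. Moreover $\tv{\|s\|\leq\sqrt{K}\,\check{}}=1$, where $K$ is a ground-model bound for $\widehat x$; this is the point where boundedness by a ground-model real is needed. Hence $s$ descends by (iv) to some $y\in A$ with $y=y^*$ and $y^2=x$, so $x\geq0$.

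Finally, the order statement follows by applying the positivity equivalence to $y-x$ and using (i) to identify $\widehat{y-x}$ with $\widehat y-\widehat x$.
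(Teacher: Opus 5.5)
Your proof is correct. For (i), (ii) and (iv) it is a careful unwinding of the construction, which is all the paper offers (``immediate from the construction''). Your Archimedean argument in (ii) supplies the one step the paper leaves implicit: that the infimum defining $\|x\|$ is attained, which is what the ``only if'' direction needs.

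The genuine difference is in (iii). The paper does not prove the positivity equivalence; it cites \cite[Proposition~3]{Oza86}. You instead prove it directly: you transfer the existence of a self-adjoint square root, extract it with the maximum principle, and use $\|s\|^2=\|\widehat{x}\|\leq\check{K}$ to see that it is a bounded section that descends. This is the same maneuver the paper performs later, inside the proof of Lemma~\ref{lem:descent-normality}, to descend $\tv{\widehat{p}\leq\widehat{a}}=1$. Your route makes the fact self-contained and shows exactly where boundedness by a ground-model real is needed, at the cost of a few lines that the citation avoids.

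One small point should be made explicit. Both directions of your positivity argument use that $x\geq0$ in $A$ if and only if $x=y^*y$ (equivalently $x=h^2$ with $h=h^*$) for some $y\in A$. That relies on $A$ being a \cstar-algebra. The paper records this right after the construction, so it is available, but you should cite it rather than leave it tacit.
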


\begin{proof}
Parts (i), (ii) and (iv) are immediate from the construction.  Part (iii) is \cite[Proposition~3]{Oza86}, and the consequence follows by applying it to $y-x$.
\end{proof}

The next fact is the dictionary between the Boolean algebra and the central projections of $A$, and it is what gives phrases such as ``mix $u$ on $b$ and zero off $b$'' a meaning in $A$.  Once $Z_{B}$ is identified with a subalgebra of the center of $A$, each $b\in B$ is a central projection of $A$.

\begin{fact}\label{fact:dictionary}
Let $A$ be the bounded global-section algebra of $N$, regarded as containing $Z_{B}$ centrally, and let $x,y\in A$ and $b\in B$.  Then
\[
    b\leq\tv{\widehat{x}=\widehat{y}}\quad\text{if and only if}\quad bx=by .
\]
Consequently, if $(b_i)_{i\in I}$ is a partition of $1$ in $B$ and $(x_i)_{i\in I}$ is a norm-bounded family in $A$, then the name supplied by Fact~\ref{fact:bvm}(v) is the ascent of the unique $x\in A$ satisfying $b_ix=b_ix_i$ for all $i$, and $\|x\|\leq\sup_i\|x_i\|$.
\end{fact}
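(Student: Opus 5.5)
We prove the equivalence first and then derive the mixing statement from it. Throughout, we use that $Z_B$ sits centrally in $A$, so that $b$ acts as a central projection, and that ascent commutes with the algebraic operations and the norm (Fact~\ref{fact:exactness}). Put $d=x-y\in A$. Then $\tv{\widehat{x}=\widehat{y}}=\tv{\widehat{d}=0}$ by Fact~\ref{fact:exactness}(i) and Fact~\ref{fact:bvm}(i). Likewise $bx=by$ if and only if $bd=0$. So it suffices to show that $b\leq\tv{\widehat{d}=0}$ if and only if $bd=0$.

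The key step is to identify the ascent of the central projection $b\in Z_B$. I expect it to be the internal scalar that is $1$ on $b$ and $0$ off $b$, that is,
\[
\tv{\widehat{b}=1}\geq b \quad\text{and}\quad \tv{\widehat{b}=0}\geq 1-b.
\]
This should come straight from how $Z_B$ is built as the bounded sections of the internal $\C$: the element $b$ is the mixture of the names $\check 1$ and $\check 0$ over the partition $\{b,1-b\}$. I take this as the defining identification of $B$ with $\Proj(Z_B)$, using Lemma~\ref{lem:commutative} if needed.

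With that in hand, the equivalence follows.
\begin{itemize}
\item[($\Leftarrow$)] Suppose $bd=0$. Then $\tv{\widehat{b}\,\widehat{d}=0}=1$. Meeting with $b\leq\tv{\widehat{b}=1}$ and using substitutivity gives $b\leq\tv{\widehat{d}=0}$.
\item[($\Rightarrow$)] Suppose $b\leq\tv{\widehat{d}=0}$. Then $b\leq\tv{\widehat{b}\,\widehat{d}=0}$. Also $1-b\leq\tv{\widehat{b}=0}\leq\tv{\widehat{b}\,\widehat{d}=0}$. Taking the join, $\tv{\widehat{b}\,\widehat{d}=0}=1$, so $bd=0$ by Fact~\ref{fact:exactness}(i).
\end{itemize}

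For the consequence, let $K=\sup_i\|x_i\|$ and let $u$ be the mixture from Fact~\ref{fact:bvm}(v). The set $\{w:\widehat{x}_i\ldots\}$-style conditions are handled cell by cell:
\[
b_i\leq\tv{u=\widehat{x}_i}\meet\tv{\widehat{x}_i\in N}\meet\tv{\|\widehat{x}_i\|\leq\check K}\leq\tv{u\in N\wedge\|u\|\leq\check K}.
\]
Here $\tv{\|\widehat{x}_i\|\leq\check K}=1$ by Fact~\ref{fact:exactness}(ii). Joining over the partition gives truth value $1$. Fact~\ref{fact:exactness}(iv) then makes $u=\widehat{x}$ for a unique $x\in A$, and (ii) gives $\|x\|\leq K$. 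Since $b_i\leq\tv{\widehat{x}=\widehat{x}_i}$, the equivalence yields $b_ix=b_ix_i$.

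For uniqueness, suppose $x'\in A$ also satisfies $b_ix'=b_ix_i$ for all $i$. Then $b_i\leq\tv{\widehat{x}'=\widehat{x}_i}$ for every $i$, so $\tv{\widehat{x}=\widehat{x}'}\geq\bigvee_i b_i=1$, and hence $x=x'$.

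The main obstacle is the identification of $\widehat{b}$ with the internal characteristic scalar. Everything else is substitutivity plus the exactness facts. If the embedding $Z_B\subseteq Z(A)$ is defined more abstractly, for example through Ozawa's isomorphism, I would check that it sends $b$ to the descent of the $\{0,1\}$-valued mixture. To do this I would show that the descent is a projection (by Fact~\ref{fact:exactness}(iii)) whose support in $B$ is exactly $b$.
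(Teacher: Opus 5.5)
Your proof is correct, and it takes a different route from the paper's. The paper does not prove the equivalence itself. It cites the computation of $\tv{\Phi(x)=\Phi(y)}$ in the proof of \cite[Theorem~2]{Oza86}, where Ozawa builds the internal algebra out of the external one and evaluates equality values directly. It then handles the consequence in one line: the mixed name is a bounded section, so Fact~\ref{fact:exactness}(iv) descends it.

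You work from the internal side instead. After reducing to $d=x-y$, your only input is that $\widehat{b}$ is internally the $\{0,1\}$-valued scalar $\chi_b\cdot 1_N$, i.e.\ $b\leq\tv{\widehat{b}=1}$ and $1-b\leq\tv{\widehat{b}=0}$. This is exactly how $B$ is identified with $\Proj(Z_B)$: a projection $\lambda$ of $Z_B$ satisfies $\tv{\lambda\in\{0,1\}}=1$ and corresponds to $\tv{\lambda=1}$. Both implications then follow from substitutivity and Fact~\ref{fact:exactness}(i).

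What your route buys is self-containedness and generality. It applies verbatim to an arbitrary internal unital \cstar-algebra $N$, not only to one produced by Ozawa's construction, so it matches the statement as phrased. You also make explicit what the paper leaves implicit in the consequence:
\begin{itemize}
\item the identity $b_ix=b_ix_i$;
\item the norm bound, via Fact~\ref{fact:exactness}(ii);
\item uniqueness, via $\bigvee_i b_i\leq\tv{\widehat{x}=\widehat{x}'}$, which is lighter than the annihilator argument the paper later uses for uniqueness in Proposition~\ref{prop:mixing}.
\end{itemize}
The paper's citation buys brevity, at the cost of leaning on a specific construction.

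If $N$ is not internally unital, replace $\widehat{b}\,\widehat{d}$ by the internal scalar product $\chi_b\widehat{d}$; nothing else changes. The stray phrase about ``$\{w:\widehat{x}_i\ldots\}$-style conditions'' should simply be deleted.
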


\begin{proof}
The displayed equivalence is the computation of $\tv{\Phi(x)=\Phi(y)}$ in the proof of \cite[Theorem~2]{Oza86}.  For the consequence, the mixed name has norm at most $\sup_i\|x_i\|$ and so is a bounded section, and Fact~\ref{fact:exactness}(iv) descends it.
\end{proof}

Restricting the algebra of truth values to a band restricts the algebra of bounded sections to the corresponding central corner.  We isolate this, since it is what allows a statement about $A$ to be localized to $bA$ by localizing $B$ to $B\!\restr\!b$, and since it is not contained in Fact~\ref{fact:restriction}, which speaks only of truth values.

\begin{lem}[Sections over a band]\label{lem:band-sections}
Let $N$ be a name with $\tv{N\text{ is a \cstar-algebra}}=1$, let $A$ be its bounded global-section algebra, and let $b\in B$ be nonzero.  Then $N\!\restr\!b$ is internally a \cstar-algebra in $V^{B\restr b}$, and the assignment
\[
    \Theta(x)=\text{the class of }\widehat{x}\!\restr\!b
\]
is an isometric $*$-isomorphism from $bA$ onto the bounded global-section algebra of $N\!\restr\!b$, carrying the unit $b$ of $bA$ to the internal unit.
\end{lem}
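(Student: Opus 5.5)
The plan is to check four things in order: that $N\!\restr\!b$ is internally a \cstar-algebra, that $\Theta$ is well defined, that it is an isometric $*$-homomorphism with the right unit, and finally that it is bijective. Everything is read off from Fact~\ref{fact:restriction} applied to the relevant formulas, together with the dictionary Fact~\ref{fact:dictionary}.

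First, $\tv{N\text{ is a \cstar-algebra}}=1$ gives, by Fact~\ref{fact:restriction}, that the restricted statement has truth value $b\meet 1=b$, which is the unit of $B\!\restr\!b$. So $N\!\restr\!b$ is internally a \cstar-algebra. The same fact shows that $\check{\alpha}\!\restr\!b$ is the canonical $B\!\restr\!b$-name of $\alpha$, which a short induction on rank confirms.

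Second, for well-definedness and boundedness, take $x\in A$. Since $\tv{\widehat{x}\in N}=1$ and $\tv{\|\widehat{x}\|\leq\check{K}}=1$, restriction gives the same statements for $\widehat{x}\!\restr\!b$ with value $b$, so this name is a bounded section. The definition only needs $\Theta$ on $bA$, but it is convenient to define it on all of $A$ and note that $\Theta(x)$ depends only on $bx$. Indeed, by Fact~\ref{fact:restriction}, $\widehat{x}\!\restr\!b=\widehat{y}\!\restr\!b$ has value $b\meet\tv{\widehat{x}=\widehat{y}}$. This equals $b$ iff $b\leq\tv{\widehat{x}=\widehat{y}}$, iff $bx=by$ by Fact~\ref{fact:dictionary}. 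That single equivalence gives both well-definedness and injectivity on $bA$.

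Third, the operations. The ascents satisfy $\tv{\widehat{x+y}=\widehat{x}+\widehat{y}}=1$, and similarly for products, scalars (scalars in $Z_B$ restrict to scalars of the restricted field) and the involution. Restricting these identities shows $\Theta$ is a $*$-homomorphism. For isometry, use Fact~\ref{fact:exactness}(ii) on both sides. We have $\|\Theta(x)\|\leq\alpha$ iff $b\leq\tv{\|\widehat{x}\|\leq\check{\alpha}}$. One must then check that this holds iff $\|bx\|\leq\alpha$. Here $\widehat{bx}$ is the mix of $\widehat{x}$ on $b$ and $0$ off $b$ (Fact~\ref{fact:dictionary}), so
\[
\tv{\|\widehat{bx}\|\leq\check{\alpha}}=\bigl(b\meet\tv{\|\widehat{x}\|\leq\check{\alpha}}\bigr)\join(1-b)
\]
for $\alpha\geq 0$. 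This equals $1$ exactly when $b\leq\tv{\|\widehat{x}\|\leq\check{\alpha}}$. The unit $b$ of $bA$ has $\widehat{b}$ equal to $\widehat{1}$ on $b$, so $\Theta(b)=\Theta(1)$ is the internal unit.

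Fourth, surjectivity, which I expect to be the main obstacle. It is the step needing an honest construction of names rather than reading off truth values. Given a bounded section $v$ of $N\!\restr\!b$ over $B\!\restr\!b$, I need a $B$-name $u$ with $u\!\restr\!b=v$ (up to value-$b$ equality) and $u$ a bounded section of $N$. My first attempt is to lift $v$ to a $B$-name $\tilde v$ by recursion: regard each $B\!\restr\!b$-value as an element of $B$, and take domain the lifts of domain elements. Then I check $\tilde v\!\restr\!b=v$ by induction on rank. Next I define $u$ by mixing, via Fact~\ref{fact:bvm}(v) over the partition $\{b,1-b\}$: on $b$, a witness for membership in $N$ of norm at most $K$; off $b$, the name $0$. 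To make the $b$-part legitimate, use the maximum principle (iv) to pick $u_0$ with $\tv{u_0\in N\wedge\|u_0\|\leq\check{K}\wedge u_0=\tilde v}$ equal to $\tv{\exists w\,(w\in N\wedge\|w\|\leq\check{K}\wedge w=\tilde v)}$. By Fact~\ref{fact:restriction} this value is at least $b$. Mixing $u_0$ on $b$ with $0$ on $1-b$ then gives a bounded section of $N$. Its descent $x$ satisfies $\Theta(bx)=[v]$. The delicate points are the rank induction for the lift and the compatibility of canonical names under restriction.
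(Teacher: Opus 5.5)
Your proposal is correct and follows essentially the same route as the paper. Both arguments use restriction plus the dictionary for well-definedness and injectivity, and both get isometry from the observation that $1-b\leq\tv{\|\widehat{x}\|\leq\check\alpha}$ for $x\in bA$; your formula for $\tv{\|\widehat{bx}\|\leq\check\alpha}$ is the same computation. Surjectivity likewise comes from mixing the given section on $b$ with $\widehat{0}$ off $b$.

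Two simplifications are available in your surjectivity step. First, your lift $\tilde v$ is literally $v$, since a $B\!\restr\!b$-name already is a $B$-name fixed by restriction. Second, the maximum-principle detour is unnecessary: restriction already gives $b\leq\tv{v\in N}\meet\tv{\|v\|\leq\check{K}}$, so you can mix $v$ itself directly.
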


\begin{proof}
That $N\!\restr\!b$ is internally a \cstar-algebra is Fact~\ref{fact:restriction} applied to the formula defining $N$.  We use throughout that checks commute with restriction: the canonical name of a ground-model set $y$ in $V^{B\restr b}$ has all values $b$, and so does $\check{y}\!\restr\!b$, so induction on rank identifies the two.  Fact~\ref{fact:restriction} may therefore be applied to formulas carrying ground-model parameters without further comment.

Let $x\in A$ and choose a ground-model real $K\geq0$ with $\tv{\|\widehat{x}\|\leq\check{K}}=1$.  By Fact~\ref{fact:restriction},
\[
    \tv{\widehat{x}\!\restr\!b\in N\!\restr\!b}^{V^{B\restr b}}=b,
    \qquad
    \tv{\|\widehat{x}\!\restr\!b\|\leq\check{K}}^{V^{B\restr b}}=b,
\]
so $\widehat{x}\!\restr\!b$ is a bounded section of $N\!\restr\!b$ and $\Theta(x)$ is defined.

Suppose $x,y\in bA$.  By Fact~\ref{fact:restriction}, $\tv{\widehat{x}\!\restr\!b=\widehat{y}\!\restr\!b}^{V^{B\restr b}}=b$ if and only if $b\leq\tv{\widehat{x}=\widehat{y}}$, which by Fact~\ref{fact:dictionary} holds if and only if $bx=by$, that is, if and only if $x=y$.  So $\Theta$ is well defined and injective on $bA$.  For the norm, note that $x=bx$ gives $(1-b)x=(1-b)0$, so $1-b\leq\tv{\widehat{x}=\widehat{0}}$ by Fact~\ref{fact:dictionary}, whence $1-b\leq\tv{\|\widehat{x}\|\leq\check{\alpha}}$ for every $\alpha\geq0$.  Consequently $\tv{\|\widehat{x}\|\leq\check{\alpha}}=1$ if and only if $b\leq\tv{\|\widehat{x}\|\leq\check{\alpha}}$, and the two infima defining the norm of $x$ in $A$ and of $\Theta(x)$ in the bounded global-section algebra of $N\!\restr\!b$ agree.

For surjectivity we first record that a $B\!\restr\!b$-name is a $B$-name fixed by restriction.  Indeed, the values of such a name $u$ lie in $B\!\restr\!b$, so $b\meet u(w)=u(w)$ for every $w\in\dom(u)$, and induction on rank gives $u\!\restr\!b=u$.  Now let $u$ be a bounded section of $N\!\restr\!b$, say with $\tv{\|u\|\leq\check{K}}^{V^{B\restr b}}=b$ for a ground-model $K\geq0$.  Reading $u$ as a $B$-name and applying Fact~\ref{fact:restriction} to $u=u\!\restr\!b$ gives
\[
    b\leq\tv{u\in N},\qquad b\leq\tv{\|u\|\leq\check{K}} .
\]
Let $v$ be a name mixing $u$ on $b$ with $\widehat{0}$ on $1-b$, as in Fact~\ref{fact:bvm}(v).  Then $\tv{v\in N}=1$ and $\tv{\|v\|\leq\check{K}}=1$, so by Fact~\ref{fact:exactness}(iv) there is $x\in A$ with $\tv{\widehat{x}=v}=1$.  From $1-b\leq\tv{\widehat{x}=\widehat{0}}$ and Fact~\ref{fact:dictionary} we get $(1-b)x=0$, so $x\in bA$; and from $b\leq\tv{\widehat{x}=u}$ and Fact~\ref{fact:restriction} we get $\tv{\widehat{x}\!\restr\!b=u}^{V^{B\restr b}}=b$.  Hence $\Theta(x)$ is the class of $u$.

Finally, the operations on both sides are defined by descent, so $\Theta$ matches them.  For instance $\tv{\widehat{x+y}=\widehat{x}+\widehat{y}}=1$ restricts to $\tv{\widehat{x+y}\!\restr\!b=\widehat{x}\!\restr\!b+\widehat{y}\!\restr\!b}^{V^{B\restr b}}=b$, whence $\Theta(x+y)=\Theta(x)+\Theta(y)$; multiplication, the involution and the action of $Z_{B}\!\restr\!b$ are identical.  For the unit, $b\cdot b=b\cdot1$ gives $b\leq\tv{\widehat{b}=1_N}$ by Fact~\ref{fact:dictionary}.
\end{proof}

\begin{rem}\label{rem:band-transfer}
By Fact~\ref{fact:restriction}, if $M$ is an internal \AWstar-factor in $V^{B}$ then $M\!\restr\!b$ is an internal \AWstar-factor in $V^{B\restr b}$, and Lemma~\ref{lem:band-sections} identifies its bounded global-section algebra with $bA$.  Since $\Theta$ is a $*$-isomorphism, the discussion of Remark~\ref{rem:iso-invariance} applies to it verbatim, so normality, monotone completeness, projection-lattice joins and self-adjoint suprema may be read off on either side.  In keeping with the convention of \S\ref{subsec:bvm} we suppress the restriction and write $M$ for $M\!\restr\!b$.
\end{rem}

Internal quantification over $N$ can be computed over the bounded sections alone, which keeps the unbounded part of $N$ from interfering.

\begin{fact}[{\cite[Lemma~1]{Oza86}}]\label{fact:quantifiers}
Let $A$ be the bounded global-section algebra of $N$ and let $\varphi,\psi$ be formulas with one free variable.  If $\tv{\varphi(\widehat{u})}=1$ for some $u\in A$, then
\[
    \tv{\forall x\in N\,(\varphi(x)\Rightarrow\psi(x))}
        =\bigwedge\{\tv{\psi(\widehat{x})}:x\in A,\ \tv{\varphi(\widehat{x})}=1\},
\]
\[
    \tv{\exists x\in N\,(\varphi(x)\wedge\psi(x))}
        =\bigvee\{\tv{\psi(\widehat{x})}:x\in A,\ \tv{\varphi(\widehat{x})}=1\} .
\]
\end{fact}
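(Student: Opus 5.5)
The plan is to prove the universal formula first and then obtain the existential one by duality. Write $b$ for the right-hand meet $\bigwedge\{\tv{\psi(\widehat{x})}:x\in A,\ \tv{\varphi(\widehat{x})}=1\}$ and $a$ for $\tv{\forall x\in N\,(\varphi(x)\Rightarrow\psi(x))}$.

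\emph{The inequality $a\leq b$.} This is just instantiation. Fix $x\in A$ with $\tv{\varphi(\widehat{x})}=1$. Since $\tv{\widehat{x}\in N}=1$, Boolean-valued soundness (Fact~\ref{fact:bvm}(iii)) gives $a\leq\tv{\varphi(\widehat{x})\Rightarrow\psi(\widehat{x})}=\tv{\psi(\widehat{x})}$. Taking the meet over all such $x$ gives $a\leq b$.

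\emph{The inequality $b\leq a$.} By the definition of the quantifier, it suffices to show that $b\meet c\leq\tv{\psi(v)}$ for an arbitrary name $v$, where $c:=\tv{v\in N\wedge\varphi(v)}$. The only difficulty is that $v$ need not be a bounded section, and this is handled by slicing. I proceed in the following steps.
\begin{enumerate}
    \item[(1)] Replace $v$ by a name that is internally a real-normed element of $N$ everywhere. Using Fact~\ref{fact:bvm}(v), mix $v$ on $c$ with $\widehat{u}$ on $1-c$, where $u$ is the given element with $\tv{\varphi(\widehat{u})}=1$. The result $v'$ satisfies $\tv{v'\in N}=1$, $\tv{\varphi(v')}=1$ and $c\leq\tv{v'=v}$; the middle equality uses substitutivity (Fact~\ref{fact:bvm}(i)) on each piece.
    \item[(2)] Slice into bands of bounded norm. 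Since $\tv{\|v'\|\in\R}=1$, the Archimedean identity \eqref{eq:archimedean} gives $\bigvee_n\tv{\|v'\|\leq\check n}=1$. Lemma~\ref{lem:refinement} then refines this covering to a partition $(c_n)_{n\in\N}$ of $1$ with $c_n\leq\tv{\|v'\|\leq\check n}$.
    \item[(3)] Produce a bounded section on each band. For each $n$, mix $v'$ on $c_n$ with $\widehat{u}$ on $1-c_n$. The resulting name lies in $N$ with truth value $1$, has norm at most $\max(n,\|u\|)$ with truth value $1$, and satisfies $\varphi$ with truth value $1$, again by substitutivity on the two pieces. By Fact~\ref{fact:exactness}(iv) it is therefore $\widehat{x_n}$ for some $x_n\in A$ with $\tv{\varphi(\widehat{x_n})}=1$.
    \item[(4)] Conclude. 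From step (3), $b\leq\tv{\psi(\widehat{x_n})}$. Also $c_n\meet c\leq\tv{\widehat{x_n}=v}$, so substitutivity gives $b\meet c\meet c_n\leq\tv{\psi(v)}$. Taking the join over $n$ and using $\bigvee_n c_n=1$ yields $b\meet c\leq\tv{\psi(v)}$, hence $b\leq a$.
\end{enumerate}

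\emph{The existential formula.} Apply the universal formula with $\neg\psi$ in place of $\psi$ and take complements:
\[
\tv{\exists x\in N\,(\varphi\wedge\psi)}=1-\tv{\forall x\in N\,(\varphi\Rightarrow\neg\psi)}=1-\bigwedge\bigl(1-\tv{\psi(\widehat{x})}\bigr)=\bigvee\tv{\psi(\widehat{x})},
\]
where the meet and join range over $x\in A$ with $\tv{\varphi(\widehat{x})}=1$; the last step is De Morgan's law in the complete Boolean algebra $B$.

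The main obstacle is step (2). An arbitrary name in $N$ is bounded only by an internal real, and the Archimedean slicing combined with disjoint refinement is exactly what converts it into countably many ground-model-bounded pieces. Steps (3) and (4) then require only routine bookkeeping with mixing and substitutivity.
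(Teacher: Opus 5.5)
Your proof is correct. The paper gives no proof of this fact, only the citation to Ozawa, and your argument is the standard one: pad with $\widehat{u}$ to get full truth value, slice by \eqref{eq:archimedean} and Lemma~\ref{lem:refinement} into ground-model-bounded bands, descend each padded slice via Fact~\ref{fact:exactness}(iv), reassemble by substitutivity, and obtain the existential clause by De Morgan. This band-slicing is the same device the paper itself uses in Theorem~\ref{thm:directed-band-form}. One minor point: the instantiation step needs only the definition of $\tv{\forall x\,\cdot}$ as a meet, not Fact~\ref{fact:bvm}(iii).
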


It remains to say how an external subset of $A$ becomes an internal subset of $N$.  For $X\subseteq A$, let $\dot X$ be the name with $\dom(\dot X)=\{\widehat{x}:x\in X\}$ and all Boolean values $1$.  Unwinding the definition of $\tv{\in}$ gives, for every name $y$,
\begin{equation}\label{eq:membership}
    \tv{y\in\dot X}=\bigvee_{x\in X}\tv{y=\widehat{x}} .
\end{equation}

The following description of the members of $\dot X$ is worth keeping in mind throughout \S\ref{sec:transfer} and \S\ref{sec:monotone}.  The set $\dot X$ is the closure of $X$ under central mixing, so a property asserted of it internally has to be checked against the mixtures and not only against the displayed ascents.

\begin{lem}\label{lem:mixtures}
Let $X\subseteq A$ be nonempty, let $y$ be a name and let $b\in B$.  Then $b\leq\tv{y\in\dot X}$ if and only if there are a partition $(b_i)_{i\in I}$ of $b$ and elements $x_i\in X$ with $b_i\leq\tv{y=\widehat{x_i}}$ for every $i$.  In particular $\tv{y\in\dot X}=1$ if and only if such a family exists with $(b_i)$ a partition of $1$.
\end{lem}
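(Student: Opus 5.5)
The plan is to unwind \eqref{eq:membership} and apply Lemma~\ref{lem:refinement} in one direction, and the mixing principle together with substitutivity in the other.

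For the forward direction, suppose $b\leq\tv{y\in\dot X}$. By \eqref{eq:membership}, $b\leq\bigvee_{x\in X}\tv{y=\widehat{x}}$. Index the covering family by $X$ itself, setting $c_x=\tv{y=\widehat{x}}$. Lemma~\ref{lem:refinement} then gives a pairwise disjoint family $(b_x)_{x\in X}$ with $b_x\leq b\meet c_x$ and $\bigvee_x b_x=b$. Taking $I=X$ and $x_i=x$, this is exactly a partition of $b$ with $b_i\leq\tv{y=\widehat{x_i}}$. We may discard the zero cells if partitions are required to have nonzero members; if $b=0$, the empty partition, or a single cell $0$ with an arbitrary element of the nonempty $X$, suffices. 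This is where nonemptiness of $X$ is used.

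For the converse, suppose $(b_i)_{i\in I}$ partitions $b$ and $b_i\leq\tv{y=\widehat{x_i}}$ with $x_i\in X$. Each $\widehat{x_i}\in\dom(\dot X)$ with value $1$, so by \eqref{eq:membership} we have $\tv{y=\widehat{x_i}}\leq\tv{y\in\dot X}$. Hence $b=\bigvee_i b_i\leq\bigvee_i\tv{y=\widehat{x_i}}\leq\tv{y\in\dot X}$. No substitutivity beyond \eqref{eq:membership} itself is needed here. The ``in particular'' clause is the case $b=1$.

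I expect no real obstacle. The only care needed is that a ``partition of $b$'' means a pairwise disjoint family with join $b$, which is precisely what Lemma~\ref{lem:refinement} delivers, and the harmless bookkeeping of zero cells and the degenerate case $b=0$.
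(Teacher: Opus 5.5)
Your proof is correct and is the paper's argument: one direction applies Lemma~\ref{lem:refinement} to the family $\bigl(\tv{y=\widehat{x}}\bigr)_{x\in X}$, and the other reads off $b=\bigvee_i b_i\leq\tv{y\in\dot X}$ directly from \eqref{eq:membership}. Your aside about where nonemptiness of $X$ is used is not needed, since the empty partition already handles $b=0$, but it does no harm.
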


\begin{proof}
If such a family exists then $b=\bigvee_ib_i\leq\bigvee_{x\in X}\tv{y=\widehat{x}}=\tv{y\in\dot X}$ by \eqref{eq:membership}.  Conversely, if $b\leq\tv{y\in\dot X}$, apply Lemma~\ref{lem:refinement} to the family $\bigl(\tv{y=\widehat{x}}\bigr)_{x\in X}$.
\end{proof}

By Fact~\ref{fact:dictionary}, when $X$ is norm bounded the descents of the members of $\dot X$ are exactly the central mixtures of members of $X$ inside $A$.  For $X\subseteq\Proj(A)$ no boundedness hypothesis is needed, since projections have norm at most $1$.

\subsection{Ozawa's representation theorems}\label{subsec:ozawa}

We can now state the two results from the literature on which everything rests.  Throughout, $Z$ is a commutative \AWstar-algebra, $B=\Proj(Z)$, and $Z$ is identified with $Z_{B}$ as above.

The first transfers the \AWstar\ axiom itself across the boundary.

\begin{thm}[{\cite[Theorem~4]{Oza86}}]\label{thm:bounded-section}
Let $N$ be an \AWstar-algebra in $V^{B}$.  Then the bounded global-section algebra of $N$ is an \AWstar-algebra containing $Z_{B}$ in its center as a unital \AWstar-subalgebra.  Conversely, every \AWstar-algebra containing $Z_{B}$ in its center as a unital \AWstar-subalgebra is $Z_{B}$-linearly $*$-isomorphic to the bounded global-section algebra of some \AWstar-algebra in $V^{B}$.
\end{thm}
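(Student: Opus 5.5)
The proof splits into the two directions. For the forward direction, fix an internal \AWstar-algebra $N$ with bounded global-section algebra $A$; we already know $A$ is a \cstar-algebra. Take $X\subseteq A$ and form the name $\dot X$. Since $\tv{\dot X\subseteq N}=1$, the internal \AWstar\ axiom and the maximum principle (Fact~\ref{fact:bvm}(iv)) give a name $e$ with $\tv{e\text{ is a projection and } R_N(\dot X)=eN}=1$. As $\tv{\|e\|\leq 1}=1$, Fact~\ref{fact:exactness}(iv) descends $e$ to some $p\in A$, and Fact~\ref{fact:exactness}(iii) shows $p$ is a projection.

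We then verify $R_A(X)=pA$ by the following chain of equivalences for $a\in A$:
\[
xa=0\ \text{for all}\ x\in X
\iff \bigwedge_{x\in X}\tv{\widehat{x}\widehat{a}=0}=1
\iff \tv{\forall y\in\dot X\,(y\widehat{a}=0)}=1
\iff \tv{\widehat{a}\in eN}=1
\iff pa=a .
\]
The second equivalence is the only one needing care: a member of $\dot X$ is, by Lemma~\ref{lem:mixtures}, a mixture of ascents over a partition, and on each cell it annihilates $\widehat{a}$. Conversely, each ascent lies in $\dot X$ with value $1$. For the center, $Z_B$ consists of sections of the internal scalars times $1_N$, so it is central by transfer. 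Its projections are exactly the $b\in B$, and Fact~\ref{fact:dictionary} shows that joins in $B$ agree with joins in $\Proj(A)$, so $Z_B$ is a unital \AWstar-subalgebra.

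For the converse, start from $A$ with $Z_B$ central. Define the Boolean-valued equality
\[
\varepsilon(a,c)=\text{the largest }b\in B\text{ with }ba=bc,
\]
which exists because $1-\varepsilon(a,c)$ is the central support of $a-c$, and this uses the \AWstar\ structure. The aim is to build a name $N_0$ with domain $\{\widehat a:a\in A\}$ whose internal equality is given by $\varepsilon$, and to descend the operations and the $Z_B$-action to internal operations over the internal field $\C$, using the identification $Z=Z_B$ from Lemma~\ref{lem:commutative}. The internal norm of $a$ is the internal real $R_a$ determined by
\[
\tv{R_a\leq\check\alpha}=\bigvee\{b\in B:\|ba\|\leq\alpha\}.
\]
The \cstar-identity and the internal normed-space axioms are then checked cell by cell. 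The candidate internal algebra $N$ is the internal completion of $N_0$, or equivalently the closure of $N_0$ under mixtures that are unbounded but bounded band by band, using the slicing \eqref{eq:archimedean}. Finally, we check that the bounded global sections of $N$ recover $A$ $Z_B$-linearly, via Fact~\ref{fact:dictionary} and the fact that bounded mixtures of elements of $A$ already lie in $A$.

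The main obstacle will be proving that $N$ is internally an \AWstar-algebra. Given an internal subset $S$ with $\tv{S\subseteq N}=1$, the plan is to let $X$ be the set of descents of those bounded sections that lie in $S$, after normalizing each cell so the norms are at most $1$; this avoids the unbounded part. The projection $p$ with $R_A(X)=pA$ should then witness internally that $R_N(S)=\widehat{p}N$, with the internal quantifiers computed over bounded sections by Fact~\ref{fact:quantifiers}. The delicate points are, first, that $X$ must be closed under central mixing, so that internal membership in $S$ really corresponds to membership in $X$; and second, that internal completeness of $N$ is compatible with this computation. For the latter, I would first try to show that $N_0$ is already internally complete, by mixing along bands a Cauchy sequence whose moduli are internal.
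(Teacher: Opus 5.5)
Your outline follows the route the paper attributes to Ozawa. The forward direction descends the internal annihilator projection (compare Lemma~\ref{lem:annihilator}), and your chain of equivalences and the join argument for $Z_B$ via Fact~\ref{fact:dictionary} are fine. For the converse, the paper does not rebuild the internal algebra; it takes it from Takeuti's realization of $Z_B$-\cstar-algebras, with bounded central mixing from Berberian (compare Proposition~\ref{prop:mixing}), and then checks the internal \AWstar\ axiom as in Proposition~\ref{prop:internal-aw}.

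That last check is the short step, not the main obstacle, but as you set it up it fails. Suppose $\tv{S=\emptyset}=b\neq0$, say $S$ is $\{s\}$ on $1-b$ and empty on $b$. Then no name lies in $S$ with truth value $1$, so your $X$ is empty and $p=1$. But $\tv{R_N(S)=\widehat{p}N}=1$ is false on $1-b$ unless $s=0$ there. For the same reason the hypothesis of Fact~\ref{fact:quantifiers} (some $u\in A$ with $\tv{\widehat{u}\in S}=1$) is not met. The fix is to adjoin $0$: pass internally to $S^{\flat}=\{\max(1,\|x\|)^{-1}x:x\in S\}\cup\{0\}$, which has the same right annihilator, and take $X=\{x\in A:\tv{\widehat{x}\in S^{\flat}}=1\}$. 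Your first ``delicate point'' then takes care of itself: this $X$ is automatically closed under bounded mixing, and right annihilators in $A$ do not see central mixing anyway, by the uniqueness in Proposition~\ref{prop:mixing}. Your second point concerns only the realization, since the argument uses nothing about $N$ beyond its being an internal \cstar-algebra whose bounded-section algebra is $A$.

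The realization is where the real work lies, and two steps of your sketch need more than the \AWstar\ structure of $A$: they need the hypothesis that joins in $B$ agree with joins in $\Proj(A)$.
\begin{enumerate}
\item The existence of $\varepsilon(a,c)$. Here the ``central support'' must be taken relative to $B$, since $Z_B$ may be smaller than $Z(A)$.
\item The identity $\|ea\|=\sup_j\|c_ja\|$ for a partition $(c_j)$ of $e$ in $B$. You need it for right-continuity of $\alpha\mapsto\bigvee\{b:\|ba\|\leq\alpha\}$, for the internal triangle inequality, and for the norm of a bounded section to agree with the norm in $A$. It follows by applying join agreement to $c_j(h-t^2e)_+=0$, where $h=ea^*a$ and $t=\sup_j\|c_ja\|$.
\end{enumerate}
Also, $N_0$ is already internally closed under all mixtures, bounded or not, so the only real issue is internal completeness. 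To prove it, pass internally to a subsequence with $\|y_{k+1}-y_k\|\leq2^{-k}$, slice once by a bound on $\|y_0\|$, and use completeness of $A$ band by band. Slicing by the Cauchy moduli for every $\varepsilon$ at once would instead require a common refinement of countably many partitions of $1$, which in general is not a partition (e.g.\ for the algebra adding a Cohen real).
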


Two features of Ozawa's proof matter below.  First, in the converse direction the internal \AWstar\ axiom is verified for every internal subset of $N$, and not merely for the ascents $\dot X$ of external subsets.  Given an internal $S$, Ozawa passes to the external set $\{x\in A:\tv{\widehat{x}\in S}=1\}$ and matches the two annihilators against each other using Fact~\ref{fact:quantifiers}.  This is what Lemma~\ref{lem:annihilator} appeals to when it produces an internal projection generating $R_{M}(\dot X)$.  Second, no mixing hypothesis appears.  What the proof needs is that such an algebra is a $Z_{B}$-\cstar-algebra in the sense of \cite{Tak83b}, whose third defining condition is exactly the central mixing of Fact~\ref{fact:dictionary}, and that condition holds automatically by \cite[\S10, Proposition~2]{Ber72}.  We verify it directly in Proposition~\ref{prop:mixing} anyway, since it is short, but it is not an extra assumption.

The second result specializes the first to factors, and is the representation on which both of our theorems rest.

\begin{thm}[{\cite[Theorem~5]{Oza86}}]\label{thm:factor-representation}
Let $N$ be an \AWstar-factor in $V^{B}$.  Then the bounded global-section algebra of $N$ is an \AWstar-algebra with center $Z_{B}$.  Conversely, every \AWstar-algebra with center $Z_{B}$ is $Z_{B}$-linearly $*$-isomorphic to the bounded global-section algebra of some \AWstar-factor in $V^{B}$.
\end{thm}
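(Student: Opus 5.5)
The plan is to derive Theorem~\ref{thm:factor-representation} from Theorem~\ref{thm:bounded-section}, which already gives the \AWstar\ property in both directions. The only new content is the matching of \emph{center of $A$ equals $Z_{B}$} with \emph{$N$ is internally a factor}. Throughout, I use that the ascent of an element of $Z_{B}$ is, with truth value $1$, a scalar multiple of $1_N$ by an internal complex number. This is just the definition of $Z_{B}$ as the bounded sections of the internal $\C$, acting on $N$ by internal scalar multiplication. In particular, the ascent of $b\in B$ is an internal scalar projection, hence internally equal to $0$ or $1$.

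\emph{Forward direction.} Let $N$ be an internal \AWstar-factor with bounded global-section algebra $A$. By Theorem~\ref{thm:bounded-section}, $A$ is an \AWstar-algebra with $Z_{B}\subseteq Z(A)$, so it remains to show $Z(A)\subseteq Z_{B}$. Take $z\in Z(A)$. I would apply Fact~\ref{fact:quantifiers} with $\varphi(x)$ the formula ``$x=x$'' and $\psi(x)$ the formula ``$\widehat z x=x\widehat z$''. This gives
\[
  \tv{\forall x\in N\ (\widehat z x=x\widehat z)}=\bigwedge_{x\in A}\tv{\widehat{zx}=\widehat{xz}}=1 .
\]
So $\widehat z$ is internally central. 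Since $N$ is internally a factor, $\tv{\exists\lambda\in\C\ (\widehat z=\lambda 1_N)}=1$. The maximum principle (Fact~\ref{fact:bvm}(iv)) then yields a name $\lambda$ for this scalar. Moreover $\tv{|\lambda|\le\check K}=1$ with $K=\|z\|$, so $\lambda$ is a bounded section of the internal $\C$, that is, an element of $Z_{B}$. Finally, Fact~\ref{fact:exactness}(i) gives $z=\lambda\in Z_{B}$.

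\emph{Converse.} Let $A$ have center exactly $Z_{B}$, and let $N$ be the internal \AWstar-algebra supplied by Theorem~\ref{thm:bounded-section}. I must show $\tv{N\text{ is a factor}}=1$. The center of an \AWstar-algebra is a commutative \AWstar-algebra. A commutative \AWstar-algebra whose only projections are $0$ and $1$ is $\C$, and this is a $\mathrm{ZFC}$ theorem. By transfer (Fact~\ref{fact:bvm}(iii)) it therefore suffices to prove
\[
  \tv{\forall q\in N\ (q\text{ a central projection}\Rightarrow q=0\vee q=1)}=1 .
\]
I would compute this with Fact~\ref{fact:quantifiers}, taking $\varphi(q)$ to be ``$q$ is a central projection in $N$''; this is witnessed by $0$. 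It then suffices to handle $p\in A$ with $\tv{\widehat p\text{ is a central projection}}=1$. By Fact~\ref{fact:exactness}(iii), $p$ is a projection. For each $x\in A$, instantiating the internal centrality at $\widehat x$ gives $\tv{\widehat p\widehat x=\widehat x\widehat p}=1$, so $px=xp$ and $p\in Z(A)=Z_{B}$. Hence $p\in B$, and by the remark in the first paragraph $\tv{\widehat p=0\vee\widehat p=1}=1$.

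The step I expect to need the most care is that remark itself. One has to check that the $Z_{B}$-module structure on $A$ given by Theorem~\ref{thm:bounded-section} really is descended internal scalar multiplication. Only then is the ascent of $b\in B$ literally $\mu\cdot 1_N$ for an internal scalar $\mu$ with $\tv{\mu\in\{0,1\}}=1$. Concretely, this amounts to the following identity: for the internal real $\mu$ mixing $1$ on $b$ and $0$ on $1-b$, one has $\tv{\widehat b=\mu 1_N}=1$. I would verify it via Fact~\ref{fact:dictionary}, using $b\cdot b=b\cdot 1$ and $(1-b)\cdot b=(1-b)\cdot 0$.
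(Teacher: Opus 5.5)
Your proposal is correct and follows the same route the paper indicates. The paper cites Ozawa rather than reproving the theorem; it remarks only that the result comes from Theorem~\ref{thm:bounded-section}, because the center of a bounded global-section algebra consists exactly of the elements whose ascents are internally central, so internal factoriality matches the external center being no larger than $Z_{B}$. Your argument spells out that observation in both directions via Fact~\ref{fact:quantifiers} and Fact~\ref{fact:dictionary}. One hypothesis goes unchecked in your converse: that $Z_{B}=Z(A)$ sits in $A$ as a unital \AWstar-subalgebra, which Theorem~\ref{thm:bounded-section} requires. It is supplied by Proposition~\ref{prop:full-center}.
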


It is worth pausing on the converse, since it is doing more work than it might look like: the hypothesis is \emph{equality} of the center with $Z_{B}$, not merely containment.  Ozawa obtains it from Theorem~\ref{thm:bounded-section} by observing that the center of a bounded global-section algebra consists of exactly those elements whose ascents are internally central, so that internal factoriality corresponds to the external center being no larger than $Z_{B}$.  This is the point at which taking the full center of $A$ as our scalars, which \S\ref{sec:transfer} shows costs nothing, yields a factor and not merely an algebra.

Since it is the one feature of the cited proofs that our arguments depend on, we verify directly that the internal \AWstar\ axiom holds for \emph{every} internal subset, and not merely for the ascents $\dot X$ of external subsets.

\begin{prop}[Internal \AWstar\ axiom]\label{prop:internal-aw}
    Let $N$ be a name satisfying 
    \[
    \tv{N\text{ is a \cstar-algebra}}=1
    \]
    and let $A$ be its bounded global-section algebra.  If $A$ is an \AWstar-algebra, then $\tv{N\text{ is an \AWstar-algebra}}=1$.
\end{prop}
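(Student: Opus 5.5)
We will show that for every name $S$ with $\tv{S\subseteq N}=1$ there is a projection $e\in A$ such that $\tv{R_N(S)=\widehat{e}N}=1$. Since the \AWstar\ axiom is a statement of the form $\forall S\subseteq N\,\exists e\,(\dots)$, this gives truth value $1$ by the definition of $\tv{\forall}$. The internal unit is the ascent of $1_A$, by Fact~\ref{fact:dictionary} applied to the central unit, so Fact~\ref{fact:quantifiers} is applicable with $\varphi(x)$ being ``$x\in N$''. We use it both with $\varphi(x)$ being ``$x\in N$'' and with $\varphi(x)$ being ``$x\in S$''. In the second case the hypothesis may fail, because $S$ might have no bounded section with truth value $1$. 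We handle this by splitting $1$ into the band $c=\tv{S\neq\emptyset}$ and its complement, and working in $V^{B\restr c}$ via Fact~\ref{fact:restriction} and Lemma~\ref{lem:band-sections}. Off $c$ the set $S$ is empty and we take $e=1$. The two pieces are then mixed using Fact~\ref{fact:dictionary}. So assume $\tv{S\neq\emptyset}=1$.

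First we reduce $S$ to bounded elements. Since the annihilator of $S$ is unchanged when each $s$ is replaced by $s/(1+\|s\|)$, we may replace $S$ internally by the set $S'=\{s/(1+\|s\|):s\in S\}$. This set has $\tv{S'\subseteq N}=1$ and $\tv{\forall s\in S'\,\|s\|\leq 1}=1$, and $\tv{R_N(S)=R_N(S')}=1$ by transfer. By the maximum principle, $S'$ now has a member of truth value $1$, and every name in $S'$ is a bounded section. Define the external set
\[
X=\{x\in A:\tv{\widehat{x}\in S'}=1\}\neq\emptyset .
\]
Since $A$ is \AWstar, there is a projection $e\in A$ with $R_A(X)=eA$. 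Fact~\ref{fact:exactness}(iii) shows that $\widehat{e}$ is internally a projection.

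Next we compare annihilators. Fix $a\in A$. By Fact~\ref{fact:quantifiers},
\[
\tv{\widehat{a}\in R_N(S')}=\bigwedge_{x\in X}\tv{\widehat{x}\widehat{a}=0}.
\]
We then show $\tv{\widehat{x}\widehat{a}=0}=$ the largest $b\in B$ with $bxa=0$, using Fact~\ref{fact:dictionary}. It follows that $\tv{\widehat{a}\in R_N(S')}=1$ if and only if $xa=0$ for all $x\in X$, that is, if and only if $a\in eA$, which holds if and only if $\tv{\widehat{a}=\widehat{e}\widehat{a}}=1$. For the internal equality $R_N(S')=\widehat{e}N$ we need more than this statement about truth value $1$. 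We need, for all $a\in A$,
\[
\tv{\widehat{a}\in R_N(S')}=\tv{\widehat{e}\widehat{a}=\widehat{a}} .
\]
Fact~\ref{fact:quantifiers} then reduces the internal $\forall y\in N$ to bounded sections, and this yields the equality of the two sets with truth value $1$.

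The main obstacle is this pointwise equality, because it must hold on every band and not merely at value $1$. We plan to obtain it by localizing. Put $b=\tv{\widehat{a}\in R_N(S')}$. Then $b\,xa=0$ for all $x\in X$, since $b\leq\tv{\widehat{x}\widehat{a}=0}$ and Fact~\ref{fact:dictionary} applies. Hence $ba\in R_A(X)=eA$, so $eba=ba$, and therefore $b\leq\tv{\widehat{e}\widehat{a}=\widehat{a}}$. Conversely, suppose $b'\leq\tv{\widehat{e}\widehat{a}=\widehat{a}}$. Then $b'a=eb'a\in eA$ annihilates $X$. Hence $b'\leq\tv{\widehat{x}\widehat{a}=0}$ for every $x\in X$, so $b'\leq\tv{\widehat{a}\in R_N(S')}$. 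The remaining subtlety is that $X$ must be closed under central mixing, so that the infimum in Fact~\ref{fact:quantifiers} really is captured by $X$. This follows from Lemma~\ref{lem:mixtures} together with the mixing part of Fact~\ref{fact:dictionary}, using that every member of $X$ has norm at most $1$.
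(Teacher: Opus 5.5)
Your proof is correct and follows the paper's route: normalize $S$ to a norm-bounded set with the same annihilator, let $X$ consist of the bounded sections lying in it with truth value $1$, take $e$ with $R_A(X)=eA$, and compare the internal annihilator with $\widehat{e}N$ via Fact~\ref{fact:quantifiers}. The paper is leaner in two places: it adjoins $0$ to the normalized set $S^{\flat}$, so that $0\in X$ and no splitting over the band $\tv{S\neq\emptyset}$ is needed; and it proves the two inclusions separately (first $\tv{\widehat{e}\in R_N(S^{\flat})}=1$ together with the right-ideal property, then Fact~\ref{fact:quantifiers} with $\varphi(a)\equiv a\in R_N(S^{\flat})$), so only truth-value-$1$ facts need checking, and neither your band-by-band identity $\tv{\widehat{a}\in R_N(S')}=\tv{\widehat{e}\widehat{a}=\widehat{a}}$ nor closure of $X$ under mixing (which Fact~\ref{fact:quantifiers} already makes unnecessary) is required.
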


\begin{proof}
    Let $S$ be a name with $\tv{S\subseteq N}=1$.  We must produce a name $q$ with $\tv{q\text{ is a projection and }R_N(S)=qN}=1$.

    The assertion that, in a \cstar-algebra, $R(S)=R(S^{\flat})$ for
    \[
        S^{\flat}=\bigl\{\max(1,\|x\|)^{-1}x:x\in S\bigr\}\cup\{0\}
    \]
    is a theorem of $\mathrm{ZFC}$: rescaling an element by a nonzero real does not change its annihilator, and $0$ annihilates everything.  By Fact~\ref{fact:bvm}(iii) and (iv) there is a name $S^{\flat}$ with
    \[
        \tv{S^{\flat}\subseteq N,\ \widehat{0}\in S^{\flat},\ \|y\|\leq 1\text{ for every }y\in S^{\flat},\ R_N(S)=R_N(S^{\flat})}=1 .
    \]
    The point of the normalization is that every name $y$ with $\tv{y\in S^{\flat}}=1$ now satisfies $\tv{\|y\|\leq\check{1}}=1$ and is therefore a bounded section; this is what makes Fact~\ref{fact:quantifiers} available below, and it is not available for $S$ itself, whose members may have internally unbounded norm.

    Put $X=\{x\in A:\tv{\widehat{x}\in S^{\flat}}=1\}$. Then $0\in X$, so $X$ is nonempty.  Since $A$ is an \AWstar-algebra there is a projection $e\in A$ with $R_A(X)=eA$.

    Next, we show $\widehat{e}$ annihilates $S^{\flat}$.  As $e=e\cdot 1\in eA=R_A(X)$ we have $xe=0$ for every $x\in X$, so $\tv{\widehat{x}\,\widehat{e}=0}=1$ by Fact~\ref{fact:exactness}(i).  Applying Fact~\ref{fact:quantifiers} with $\varphi(x)\equiv x\in S^{\flat}$ and $\psi(x)\equiv x\widehat{e}=0$, legitimate since $\tv{\widehat{0}\in S^{\flat}}=1$,
    \[
        \tv{\forall x\in N\,(x\in S^{\flat}\Rightarrow x\widehat{e}=0)} =\bigwedge_{x\in X}\tv{\widehat{x}\,\widehat{e}=0}=1 .
    \]
    Hence $\tv{\widehat{e}\in R_N(S^{\flat})}=1$, and since $R_N(S^{\flat})$ is internally a right ideal, $\tv{\widehat{e}N\subseteq R_N(S^{\flat})}=1$.

    For the reverse inclusion, since $\tv{\widehat{0}\in R_N(S^{\flat})}=1$, Fact~\ref{fact:quantifiers} applies with $\varphi(a)\equiv a\in R_N(S^{\flat})$ and $\psi(a)\equiv a\in\widehat{e}N$:
    \[
        \tv{\forall a\in N\,(a\in R_N(S^{\flat})\Rightarrow a\in\widehat{e}N)} =\bigwedge\{\tv{\widehat{a}\in\widehat{e}N}:a\in A,\ \tv{\widehat{a}\in R_N(S^{\flat})}=1\} .
    \]
    Fix such an $a$.  For every $x\in X$ we have $\tv{\widehat{x}\in S^{\flat}}=1$, so $\tv{\widehat{x}\,\widehat{a}=0}=1$ and hence $xa=0$ by Fact~\ref{fact:exactness}(i).  Thus $a\in R_A(X)=eA$, so $ea=a$, so $\tv{\widehat{e}\,\widehat{a}=\widehat{a}}=1$ and $\tv{\widehat{a}\in\widehat{e}N}=1$.  The meet is therefore $1$.

    Combining the two inclusions, $\tv{R_N(S)=R_N(S^{\flat})=\widehat{e}N}=1$, and $\widehat{e}$ is internally a projection by Fact~\ref{fact:exactness}(iii).  As $S$ was arbitrary, the internal \AWstar\ axiom holds with Boolean truth value $1$.
\end{proof}

The remaining ingredient is Sait\^o and Wright's theorem, which we simply cite.

\begin{thm}[{\cite[Corollary~4.7]{SW91}}]\label{thm:factor-normality}
    Every \AWstar-factor is normal.
\end{thm}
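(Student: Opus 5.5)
Since the statement is cited from \cite[Corollary~4.7]{SW91}, what follows is only the route I would take to reprove it; the hardest case is left largely open. The plan has four steps: reduce normality to orthogonal families, dispose of the finite and countable cases using earlier results, and then use the factor hypothesis and comparison theory to reduce the remaining properly infinite, uncountable case to the countable one. That last reduction is the main obstacle.

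\textbf{Step 1 (reduction to orthogonal sums).} Let $(p_i)_{i\in I}$ be upward directed with join $p$, and let $a\in A_{\mathrm{sa}}$ satisfy $a\geq p_i$ for all $i$.
\begin{itemize}
\item First pass to a well-ordered increasing chain $(q_\alpha)_{\alpha<\kappa}$ with the same join. This uses a standard transfinite induction on the cofinality of the directed set, taking joins at limit stages. At a limit stage $a$ still dominates the join, by the induction hypothesis applied to the chain below it.
\item Then write $p=\bigvee_\alpha e_\alpha$ with $e_\alpha=q_{\alpha+1}-q_\alpha$ pairwise orthogonal.
\end{itemize}
Normality thus becomes the following statement: if $(e_\alpha)$ is an orthogonal family of projections and $a\geq\sum_{\alpha\in F}e_\alpha$ for every finite $F$, then $a\geq\bigvee_\alpha e_\alpha$. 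Replacing $a$ by a suitable compression, we may assume $0\leq a\leq 1$. The family of finite partial sums is increasing and dominated by $a$, so these bounds carry the same information.

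\textbf{Step 2 (easy cases).}
\begin{itemize}
\item If the index set is countable, the claim is $\sigma$-normality, which holds by Ara--Goldstein \cite{AraGoldstein95}.
\item If the factor is finite, Wright's theorem \cite{W80} applies.
\item If the factor is of type I, it is monotone complete by Kaplansky.
\end{itemize}
So we may assume $A$ is a properly infinite factor and the family $(e_\alpha)$ is uncountable.

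\textbf{Step 3 (using factoriality).} Suppose, towards a contradiction, that $a\not\geq p$. Then some spectral projection $f\neq0$ of $p-pap$, say corresponding to $[\varepsilon,1]$, satisfies
\[
pap\leq p-\varepsilon f .
\]
In a factor, comparison is total. So the plan is to use halving and comparison of the $e_\alpha$ against $f$ to regroup the uncountable family into countably many blocks. Each block $E_n=\bigvee_{\alpha\in I_n}e_\alpha$ should be a projection against which $a$ is already known to dominate. The domination $a\geq E_n$ is to be obtained inductively on cardinality, by proving normality for families of size less than $\kappa$ first. Once the blocks are in place, $\sigma$-normality applied to $(E_n)$ gives $a\geq p$, contradicting the choice of $f$.

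\textbf{Step 4 (main obstacle).} The hard step is the regrouping in Step 3 at a singular or uncountable-cofinality stage, where no countable cofinal subfamily exists.

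What I would try first is to exploit that $A$ is properly infinite and a factor. Choose a countably decomposable-size piece of $f$, so that the orthogonal family interacts with $f$ in an essentially countable way. Concretely, I would use the partial isometries implementing $e_\alpha\precsim f$ or $f\precsim e_\alpha$ to transport the defect $\varepsilon f$ into a countable subfamily, and so contradict $\sigma$-normality. The factor hypothesis is essential here: without it, central pieces could absorb the defect. This is presumably exactly why the general (non-factor) case requires the transfer machinery developed later in the paper.
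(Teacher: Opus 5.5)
The paper gives no proof of this statement. It is quoted from Sait\^o--Wright \cite[Corollary~4.7]{SW91} and used as a black box; the only feature that matters is that it is a theorem of $\mathrm{ZFC}$, so that Fact~\ref{fact:bvm}(iii) transfers it to the internal factor $M$. Judged on its own, your proposal has a genuine gap, and the gap is exactly the content of the Sait\^o--Wright theorem. Steps~1 and~2 are a reasonable standard framework: reduction to orthogonal families by induction on cardinality, $\sigma$-normality from \cite{AraGoldstein95}, the finite case from \cite{W80}, and type~I via Kaplansky. What remains is the type~II$_\infty$ and type~III case with uncountable families. There Step~3 cannot work as described. You want countably many blocks $I_n$ of the index set $\kappa$, each of size $<\kappa$, so that the inductive hypothesis yields $a\geq E_n$. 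If $\mathrm{cf}(\kappa)>\omega$, already for $\kappa=\aleph_1$, this is impossible by counting. If every $|I_n|<\kappa$, then $\sup_n|I_n|<\kappa$, whence $\bigl|\bigcup_nI_n\bigr|\leq\aleph_0\cdot\sup_n|I_n|<\kappa$. So some block always has full size $\kappa$, and an induction on cardinality that only regroups the index set cannot get past $\aleph_1$. A working argument would have to replace the family by different projections and then explain why the fixed upper bound $a$ still controls them. The proposal contains no such mechanism.

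Step~4 does not supply one. Your hope that the family ``interacts with a countably decomposable piece $f_0\leq f$ in an essentially countable way'' is, in a \wstar-algebra, proved using a normal state $\phi$ supported on $f_0$ and the estimate $\sum_\alpha\phi(e_\alpha)\leq1$. That estimate is itself normality, so the argument is circular here. In an \AWstar-factor, countable decomposability of $f_0$ only bounds orthogonal families of \emph{subprojections} of $f_0$. It says nothing about how many $\alpha$ have $e_\alpha f_0\neq0$, and you also do not justify that $f$ dominates such an $f_0$ at all. Likewise, partial isometries $v$ implementing $e_\alpha\precsim f$ or $f\precsim e_\alpha$ turn the hypotheses $a\geq\sum_{\alpha\in F}e_\alpha$ into statements about $vav^*$, not about $a$. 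The defect $pap\leq p-\varepsilon f$ is a relation between $a$ and $p$, so it is not transported to any countable subfamily, and no contradiction with $\sigma$-normality appears.

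There is also a smaller slip in the setup: $a\not\geq p$ does not imply $pap\not\geq p$, because the off-diagonal part $pa(1-p)$ can carry the failure. The correct reduction to the corner replaces $a$ by the Schur complements $pap-pa(1-p)\bigl((1-p)a(1-p)+\delta(1-p)\bigr)^{-1}(1-p)ap$, with the inverse taken in $(1-p)A(1-p)$. These still dominate every finite partial sum, and if they dominate $p$ for all $\delta>0$ then $a\geq p$. A norm bound then needs a separate device, since compression does not give $a\leq1$. These setup points are repairable. The regrouping in Steps~3 and~4 is not, and as written the proposal proves nothing beyond the cases already handled in Step~2.
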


\section{The transfer lemmas}\label{sec:transfer}

This section does the real work of the paper: it checks, one piece at a time, that everything needed to cross the Boolean-valued boundary is actually available.  We begin with the hypotheses of the bounded-section representation itself.

\begin{prop}[The full center]\label{prop:full-center}
Let $A$ be an \AWstar-algebra and $Z=Z(A)$.  Then $Z$ is a commutative
\AWstar-algebra, and the inclusion $Z\subseteq A$ is a unital
\AWstar-subalgebra inclusion.  
\end{prop}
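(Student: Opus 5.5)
The plan is to verify directly that $Z$ satisfies the Baer annihilator condition of the definition, using the annihilator projections that $A$ already supplies, and then to observe that the inclusion respects units and the \AWstar-structure (projection joins and annihilators computed in $Z$ agree with those in $A$).

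First, $Z$ is a commutative unital \cstar-subalgebra of $A$: it is norm closed, self-adjoint, and contains $1$. The key lemma is that for any $X\subseteq A$ the projection $e$ with $R_A(X)=eA$ is central whenever $X$ is closed under conjugation by unitaries, or more simply whenever $X\subseteq Z$. Indeed, for $X\subseteq Z$ and a unitary $u\in A$, we have $x(ueu^*)=uxeu^*=0$ for $x\in X$, so $ueu^*\in R_A(X)=eA$, giving $e\,ueu^*=ueu^*$. Taking adjoints and symmetrizing gives $ueu^*\leq e$, and replacing $u$ by $u^*$ yields equality. Since a \cstar-algebra is spanned by its unitaries, $e$ commutes with all of $A$, so $e\in Z$. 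Then
\[
R_Z(X)=R_A(X)\cap Z=eA\cap Z=eZ,
\]
where the last equality holds because $z=ez$ for $z\in eA\cap Z$, and conversely $eZ\subseteq eA\cap Z$ as $e$ is central. Hence $Z$ is an \AWstar-algebra, and it is commutative.

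For the subalgebra claim, the inclusion is unital, and we check that it is an \AWstar-subalgebra in the sense that suprema of families of projections in $Z$ taken in $\Proj(Z)$ agree with those in $\Proj(A)$ (equivalently, that the annihilator projections agree, which was just shown). Given projections $(z_i)$ in $Z$, their join in $\Proj(A)$ is $1-f$, where $fA=R_A(\{z_i\})$. By the lemma $f\in Z$, so the same $f$ gives $R_Z(\{z_i\})=fZ$, and the join in $\Proj(Z)$ is also $1-f$. The same argument shows that every MASA of $Z$ is $Z$ itself, which is monotone complete once it is known to be an \AWstar-algebra. If Ozawa's notion of ``unital \AWstar-subalgebra'' additionally requires that $Z$ be closed under the partial inverse or the polar decomposition, we note that these are determined by annihilator projections and by functional calculus of central elements, and so they stay in $Z$.

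The main subtlety is the unitary-invariance step showing that the annihilator projection of a central set is central. In particular, we must make sure that $ueu^*\in eA$ really forces $ueu^*\leq e$; this follows because $ueu^*=e\,ueu^*$ implies, after taking adjoints, that $ueu^*=ueu^*\,e$, so $ueu^*=e\,ueu^*\,e\leq e$. Everything else is bookkeeping.
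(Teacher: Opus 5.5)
Your proof is correct, but it distributes the work differently from the paper.

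\textbf{Baer condition for $Z$.} You conjugate by unitaries: $ueu^*\in R_A(X)=eA$ forces $ueu^*\le e$, and replacing $u$ by $u^*$ gives equality. The paper instead observes that $R_A(S)$ is a two-sided ideal when $S$ is central. This gives $Ae\subseteq eA$, i.e.\ $(1-e)ae=0$ for all $a$, and then $ea(1-e)=0$ by taking adjoints. So $ae=ea$ follows directly, without using that unitaries span $A$.

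\textbf{Compatibility of joins.} The paper argues order-theoretically. For $p=\bigvee_{\Proj(A)}F$, each $\Ad_u$ fixes every member of $F\subseteq\Proj(Z)$, so uniqueness of joins gives $upu^*=p$, hence $p\in Z$. You instead reduce this to your centrality lemma via the identity $\bigvee_{\Proj(A)}S=1-f$ for $R_A(S)=fA$. You state this identity without proof and should justify it. It is the two-line argument the paper gives in the proof of Lemma~\ref{lem:annihilator}: $sf=0$ gives $s\le 1-f$, and any projection upper bound $p$ satisfies $1-p\in fA$, so $1-p\le f$.

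\textbf{What each route buys.} Your route is economical, since one lemma does all the work. It also gives a stronger conclusion: annihilator projections of arbitrary subsets of $Z$, computed in $A$, already lie in $Z$, and hence so do right projections. This covers the Baer $*$-subring form of the subalgebra condition, so your hedge about partial inverses and polar decompositions is unnecessary. The paper's join argument needs only completeness of $\Proj(A)$ and invariance under inner automorphisms, not the annihilator description of joins.

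Your aside that MASAs of $Z$ are monotone complete is not needed for the statement and can be dropped.
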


Consequently, the full center satisfies the central-subalgebra hypothesis
of Theorem~\ref{thm:bounded-section} without any further assumption of normality or monotone completeness.

\begin{proof}
We check the two claims in turn, starting with the Baer annihilator condition for $Z$.  For the empty subset of $Z$, the right annihilator is $Z=1_Z$.  Let
$S\subseteq Z$ be nonempty.  The \AWstar property of $A$ gives a projection
$e\in A$ such that
\[
 R_A(S)=eA.
\]
Since $S$ is contained in the center, this right ideal is two-sided.  Indeed, if $x\in R_A(S)$, $a\in A$, and $s\in S$,
then centrality of $s$ gives
\[
 s(ax)=a(sx)=0,\qquad s(xa)=(sx)a=0.
\]
Thus $Ae\subseteq eA$.  It follows that
\[
 (1-e)ae=0
\]
for every $a\in A$.  Applying this equality to $a^*$ and taking adjoints
gives $ea(1-e)=0$.  Both off-diagonal corners vanish, so $ae=eae=ea$.
Hence $e\in Z$, and
\[
 R_Z(S)=R_A(S)\cap Z=eZ.
\]
This proves the Baer annihilator condition for $Z$.

For the second claim, let $F\subseteq\Proj(Z)$, and put
\[
 p=\bigvee\nolimits_{\Proj(A)}F.
\]
For every unitary $u\in A$, the order automorphism $\Ad_u$ fixes each
member of $F$.  Uniqueness of the join gives $upu^*=p$.  Thus $p$ commutes
with every unitary in $A$.  To see that this implies centrality without an
external input, note that a self-adjoint contraction $h$ is the real part of the
unitary
\[
 u=h+i(1-h^2)^{1/2}.
\]
After scaling and separating real and imaginary parts, every element of
$A$ is a complex linear combination of unitaries.  Therefore $p\in Z$.

The projection $p$ is consequently an upper bound for $F$ in $\Proj(Z)$.
Every upper bound in $\Proj(Z)$ is also an upper bound in $\Proj(A)$, and
hence dominates $p$.  This proves equality of the two joins.  
\end{proof}

\begin{prop}[Arbitrary bounded central mixing]\label{prop:mixing}
    Let $A$, $Z$, and $B=\Proj(Z)$ be as in Proposition~\ref{prop:full-center}. Let $(b_i)_{i\in I}$ be any pairwise orthogonal family in $B$ with join $1$, and let $(x_i)_{i\in I}$ be a norm-bounded family in $A$.  There is a unique $x\in A$ such that
    \[
        b_ix=b_ix_i\qquad(i\in I).
    \]
\end{prop}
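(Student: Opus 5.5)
Uniqueness is handled first. If $x,x'$ both work, put $d=x-x'$. Then $b_id=0$ for every $i$. The right annihilator of $\{d^*\}$, or equivalently of the central projections killing $d$, does the rest. Concretely, let $e\in\Proj(Z)$ be the largest central projection with $ed=0$: the set $\{b\in B: bd=0\}$ is closed under joins, because by Proposition~\ref{prop:full-center} joins in $\Proj(Z)$ agree with joins in $\Proj(A)$. Then $R_A(\{d\})=fA$ for some projection $f$, and each $b_i\in R_A(\{d\})$ (as $db_i=b_id=0$), so $b_i\le f$. Hence $1=\bigvee_i b_i\le f$ in $\Proj(A)$, giving $f=1$ and $d=d\cdot 1=0$.

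For existence, I would first reduce to positive contractions. Scale so that $\sup_i\|x_i\|\le 1$ and write each $x_i$ as a combination of four positive contractions $x_i=\sum_{k}\lambda_k x_i^{(k)}$, with $\lambda_k\in\{1,-1,i,-i\}$ and $0\le x_i^{(k)}\le 1$. Mixing is linear, so it suffices to mix positive contractions. The cleanest route avoids order completeness, which we do not have. Instead I would use the $Z$-module structure together with the \AWstar\ property of matrix algebras or corners.

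The natural tool is the Kaplansky/Berberian fact that an orthogonal family of projections $(b_i)$ with bounded coefficients can be summed in an \AWstar-algebra. Consider first the commutative case: mixing in $Z$ is available because $Z$ is a commutative \AWstar-algebra, and bounded mixing in $C(\text{Stonean})$ is immediate by continuity on the dense open union of the clopen sets. For general $A$, I would pass to a MASA $C$ containing the family $(b_i)$ and each $x_i$. This is impossible directly, since the $x_i$ need not commute.

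So instead I would use the trick of reducing to projections. A projection-valued mixing is constructed by taking $p=\bigvee_{\Proj(A)}\{b_ip_i\}$, where the $b_ip_i$ are pairwise orthogonal projections, and checking that $b_jp=b_jp_j$. The map $q\mapsto b_jq$ is a lattice homomorphism onto $\Proj(b_jA)$ preserving arbitrary joins. To see this, note that $\Proj(A)\cong\Proj(b_jA)\times\Proj((1-b_j)A)$, and joins split across central projections because the upper bound $b_jq+(1-b_j)q'$ is again a projection. This gives $b_jp=\bigvee_i b_jb_ip_i=b_jp_j$. For general elements, I would use that each positive contraction in an \AWstar-algebra is a norm limit of finite combinations of projections drawn from its spectral resolution in a MASA. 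Mixing each dyadic spectral projection of $x_i$ across $i$ gives a mixed element $x^{(n)}=\sum_k 2^{-n}p^{(n)}_k$ with $b_ix^{(n)}=b_i x_i^{(n)}$. Since $\|x_i-x_i^{(n)}\|\le 2^{-n}$ uniformly in $i$, the remaining task is to show that $x^{(n)}$ is Cauchy. For this I would use the norm estimate $\|y\|=\sup_i\|b_iy\|$, which holds because the largest central projection killing $y$ makes $\|y\|$ determined on the bands, as in the uniqueness argument applied to $(y^*y-\|\sup_i b_iy\|^2)_+$. This estimate is the main obstacle. I would prove it by showing that the central projection $e$ maximal with $e(y^*y-t)_+=0$ dominates every $b_i$ once $t\ge\sup_i\|b_iy\|^2$, so $e=1$. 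With the estimate, $\|x^{(n)}-x^{(m)}\|\le \sup_i\|x_i^{(n)}-x_i^{(m)}\|$, and the limit $x$ satisfies $b_ix=b_ix_i$ by continuity.
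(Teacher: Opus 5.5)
Your argument is correct. The uniqueness half is exactly the paper's: each $b_i$ lies in $R_A(\{d\})=fA$, so $b_i\le f$. Since joins in $\Proj(Z)$ and $\Proj(A)$ agree, $f=1$ and $d=0$. Your preliminary remarks about $R_A(\{d^*\})$ and a maximal central $e$ are not needed.

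For existence the two routes differ. The paper constructs nothing: it cites \cite[\S10, Proposition~2]{Ber72}, the same reference Ozawa uses. You build the mixture by hand in three steps.
\begin{enumerate}
\item Projections are mixed as the join of the orthogonal family $(b_ip_i)$. This works because cutting down by a central projection preserves arbitrary joins in $\Proj(A)$.
\item Positive contractions are approximated uniformly by dyadic spectral sums taken in a MASA. A MASA is a commutative \AWstar-algebra, hence $C(X)$ with $X$ Stonean, so the characteristic functions of the clopen sets $\overline{\{h>k2^{-n}\}}$ give error at most $2^{-n}$.
\item The approximants converge because of the identity $\|y\|=\sup_i\|b_iy\|$. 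You get this by rerunning the uniqueness argument on $(y^*y-t)_+$, using that $b_i(y^*y-t)_+=0$ by functional calculus whenever $\|b_iy\|^2\le t$.
\end{enumerate}

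The paper's citation buys brevity. Your route buys independence from Berberian's Baer $*$-ring theory: the only outside input is the Stonean spectrum of a MASA, which the paper already uses in Lemma~\ref{lem:commutative}. As a by-product, it also proves the bound $\|x\|\le\sup_i\|x_i\|$ that Fact~\ref{fact:dictionary} asserts.

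In a write-up, state explicitly that the dyadic approximations have the same shape for every $i$: coefficients $2^{-n}$ and indices $k=1,\dots,2^n$. That uniformity is what lets you mix term by term. Also drop the abandoned detours, namely the commutative-case remark and the attempt to place all $x_i$ in one MASA.
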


That is: once we commit to the full center as our scalars, there is no obstruction to gluing together a norm-bounded family of elements along any partition of central projections.  As noted after Theorem~\ref{thm:bounded-section}, this is not an extra hypothesis on $A$, but it is convenient to have it available in the form of Fact~\ref{fact:dictionary}, and the proof is short.

\begin{proof}
    By Proposition~\ref{prop:full-center}, $Z$ is a commutative \AWstar-algebra and its inclusion in $A$ preserves arbitrary joins of central projections.  The existence of $x$ is \cite[\S10, Proposition~2]{Ber72}; this is the reference Ozawa uses in \cite[\S3]{Oza86} for condition~(3) in the definition of a $Z$-\cstar-algebra, which is the present statement.

    For uniqueness, suppose that $x,y\in A$ both satisfy these equations and set $d=x-y$, so that $b_id=0$ for every $i$.  Let $f$ be a projection with $R_A(\{d\})=fA$.  Since the $b_i$ are central, $db_i=b_id=0$, so $b_i\in R_A(\{d\})$ and hence $b_i\leq f$ for every $i$.  By Proposition~\ref{prop:full-center} the join of the $b_i$ in $\Proj(A)$ is their join in $\Proj(Z)$, namely $1$, so $f=1$ and $d=d\cdot1=0$.
\end{proof}

The following lemma is the bridge between the abstract center $Z$ and the concrete scalar algebra $Z_{B}$ that the Boolean-valued machinery actually manufactures; without it, we would have two commutative \AWstar-algebras with no stated reason to think of them as the same object.

\begin{lem}\label{lem:commutative}
    Let $Z$ be a commutative \AWstar-algebra and let $B=\Proj(Z)$.  Then $B$ is complete, and $Z$ is $*$-isomorphic to $C(X_B)$, where $X_B$ is the Stone space of $B$, by an isomorphism carrying each projection to the characteristic function of the corresponding clopen set.  The same holds of $Z_{B}$, and consequently $Z\cong Z_{B}$ through a $*$-isomorphism that is the identity on $B$.
\end{lem}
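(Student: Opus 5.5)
The plan is to prove the three assertions in order: completeness of $B$, the Gelfand picture of $Z$, and then the same two facts for $Z_B$.

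\emph{Completeness of $B$.} This follows from the annihilator condition applied inside $Z$. Given $F\subseteq B$, I would take the projection $e$ with $R_Z(F)=eZ$. Then $1-e$ is an upper bound for $F$, since $fe=0$ for every $f\in F$. It is also the least upper bound: if $g\in B$ dominates every $f\in F$, then $f(1-g)=0$, so $1-g\leq e$, that is, $1-e\leq g$.

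\emph{Gelfand picture of $Z$.} By Gelfand duality $Z\cong C(\Omega)$ for a compact Hausdorff space $\Omega$. Projections of $Z$ correspond exactly to characteristic functions of clopen subsets of $\Omega$. The key step is to show that $\Omega$ is totally disconnected, so that the clopen sets separate points; the Stone representation theorem then identifies $\Omega$ with $X_B$, and the isomorphism sends each projection to the characteristic function of its clopen set.

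For total disconnectedness I would use the annihilator condition directly. Take distinct points $s,t\in\Omega$ and a function $a\geq0$ in $C(\Omega)$ with $a=0$ near $s$ and $a(t)=1$. Let $e$ be the projection with $R_Z(\{a\})=eZ$. Then $1-e$ is the support projection of $a$, so $1-e$ is the indicator of a clopen set containing $\{a\neq0\}$. The open set $U$ on which $a$ vanishes identically near $s$ is annihilated by no nonzero element coming from $\{a\neq0\}$. Concretely, any $c\in C(\Omega)$ supported inside $U$ satisfies $ac=0$, so $c=ec$. Hence $e=1$ on $\overline{U}$, and the clopen set $\{e=1\}$ contains $s$ but not $t$, because $a(t)\neq0$ forces $e(t)=0$.

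\emph{The same for $Z_B$.} First I would check that $Z_B$ is a commutative \AWstar-algebra whose projection lattice is $B$. Commutativity holds because internal $\C$ is commutative, and multiplication is computed by descent (Fact~\ref{fact:exactness}(i)). The \AWstar\ property follows from Theorem~\ref{thm:bounded-section} applied to $N=\C$, which is internally an \AWstar-algebra by transfer. The main obstacle, in my view, is identifying $\Proj(Z_B)$ with $B$ as Boolean algebras. I would proceed as follows.
\begin{itemize}
\item A projection $x\in Z_B$ satisfies $\tv{\widehat{x}\in\{0,1\}}=1$ by Fact~\ref{fact:exactness}(iii), since internally the projections of $\C$ are $0$ and $1$.
\item Set $b_x=\tv{\widehat{x}=1}$. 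Conversely, for $b\in B$, mix $\check 1$ on $b$ with $\check 0$ on $1-b$ using Fact~\ref{fact:bvm}(v).
\item These two maps are mutually inverse by Fact~\ref{fact:bvm}(i).
\item They are order isomorphisms: $x\leq y$ if and only if $\tv{\widehat{x}\leq\widehat{y}}=1$, which is equivalent to $b_x\leq b_y$.
\end{itemize}

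Applying the first two parts to $Z_B$ then gives $Z_B\cong C(X_B)$ compatibly with $B$. Composing the two isomorphisms with $C(X_B)$ yields $Z\cong Z_B$, and this isomorphism is the identity on $B$ by construction.
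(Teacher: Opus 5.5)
Your proof is correct, but it reaches the description of $Z$ by a more elementary route than the paper.

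The paper goes through monotone completeness. It observes that a commutative \AWstar-algebra is its own MASA and invokes the Sait\^o--Wright MASA characterization to conclude that $Z$ is monotone complete. It then uses that monotone completeness of $C(X)$ forces $X$ to be extremally disconnected, and reads off both the completeness of $B$ and the identification $X\cong X_B$ from Stonean-ness. For $Z_{B}$ it simply cites Ozawa.

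You work directly from the annihilator axiom. Completeness of $B$ comes from $1-e$, where $R_Z(F)=eZ$. The annihilator of one Urysohn function vanishing near $s$ with $a(t)=1$ yields a clopen set containing $s$ but not $t$. So $\Omega$ is a Boolean space, and plain Stone duality identifies it with $X_B$; extremal disconnectedness then follows for free from completeness of $B$. This avoids the nontrivial fact that commutative \AWstar-algebras are monotone complete, at the price of a few lines of point-set topology. The paper's route gives monotone completeness of $Z$ as a by-product but leans on heavier citations.

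For $Z_{B}$, your explicit isomorphism $x\mapsto\tv{\widehat{x}=1}$, inverted by mixing $1$ on $b$ with $0$ off $b$, is more informative than the paper's citation. It makes precise what ``the identity on $B$'' means, and it matches the way $B$ sits inside $Z_{B}$ in Fact~\ref{fact:dictionary}.

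The one step worth tightening is the \AWstar\ property of $Z_{B}$. Theorem~\ref{thm:bounded-section} is stated under the standing identification of $Z$ with $Z_{B}$, which is what this lemma is meant to justify, so appealing to it with $N=\C$ is somewhat circular. It is just as quick to check directly that, for $X\subseteq Z_{B}$, the projection corresponding to $b=\bigwedge_{x\in X}\tv{\widehat{x}=0}$ generates $R_{Z_{B}}(X)$. This works because internally $\widehat{x}\,\widehat{c}=0$ in the field $\C$ forces $\widehat{x}=0$ or $\widehat{c}=0$.
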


This identification is \cite[Theorem~3.6 and Corollary~3.7]{Oza84} and is presupposed throughout \cite{Oza86}.  We give a proof in the form we need for completeness.

\begin{proof} 
    Since $Z$ is a commutative \AWstar-algebra, it is its own unique maximal abelian self-adjoint subalgebra.  The MASA characterization of \AWstar-algebras \cite[Theorem 1.5]{SW15} therefore implies that $Z$ is monotone complete.  By Gelfand duality, $Z\cong C(X)$ for a compact Hausdorff space $X$, and monotone completeness of $C(X)$ implies that $X$ is extremally disconnected, hence Stonean \cite[Theorems~2.3.7 and 8.2.5]{SWBook15}.

    The projections of $C(X)$ are precisely the characteristic functions of clopen subsets of $X$.  Thus $B=\Proj(Z)$ is the Boolean algebra of clopen subsets of $X$ \cite[\S7, Theorem~1, pp.~40--41]{Ber72}.  Since $X$ is Stonean, this Boolean algebra is complete, and Stone duality identifies $X$ with its Stone space $X_B$.

    The bounded global sections of the internal complex numbers over $B$ form the commutative \AWstar-algebra $C(X_B)$, with $B$ as its projection lattice \cite[\S3, pp.~7 and 10--11]{Oza90}.  Hence $Z_{B}\cong C(X_B)\cong Z$. Under both isomorphisms, each $b\in B$ corresponds to the characteristic function of the same clopen subset of $X_B$, so the composite isomorphism fixes $B$ pointwise.
\end{proof}

Putting the last two propositions and the lemma together, Ozawa's Theorem~\ref{thm:factor-representation} applies to any \AWstar-algebra whatsoever, with its own center as the scalars.

\begin{cor}[Full-center factor representation]\label{cor:factor-rep}
    Let $A$ be an \AWstar-algebra, let $Z=Z(A)$, and let $B=\Proj(Z)$.  In $V^{B}$ there is an internal \AWstar-factor $M$ whose bounded global-section algebra is $Z$-linearly $*$-isomorphic to $A$.  This representation is obtained in $\mathrm{ZFC}$.
\end{cor}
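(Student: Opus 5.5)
The plan is to assemble results already stated, since Corollary~\ref{cor:factor-rep} is essentially a composition of Proposition~\ref{prop:full-center}, Lemma~\ref{lem:commutative} and the converse half of Theorem~\ref{thm:factor-representation}. First, by Proposition~\ref{prop:full-center}, $Z=Z(A)$ is a commutative \AWstar-algebra. The inclusion $Z\subseteq A$ is a unital \AWstar-subalgebra inclusion, so joins of central projections computed in $\Proj(Z)$ agree with those in $\Proj(A)$. In particular $B=\Proj(Z)$ is a complete Boolean algebra, which Lemma~\ref{lem:commutative} also records, so $V^{B}$ makes sense.

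Second, Lemma~\ref{lem:commutative} gives a $*$-isomorphism $\iota\colon Z_{B}\to Z$ that is the identity on $B$. Composing with the inclusion, we regard $A$ as an \AWstar-algebra whose center is \emph{exactly} $\iota(Z_B)=Z$, not merely containing it. This equality is precisely the hypothesis of the converse direction of Theorem~\ref{thm:factor-representation}. That theorem then yields an internal \AWstar-factor $M$ in $V^{B}$ whose bounded global-section algebra is $Z_B$-linearly $*$-isomorphic to $A$. Transporting along $\iota$ makes this $Z$-linear.

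Third, I would check the hidden hypotheses of Ozawa's converse, namely that $A$ is a $Z_B$-\cstar-algebra in Takeuti's sense. The only nontrivial condition is bounded central mixing along arbitrary partitions of $1$ in $B$, and this is Proposition~\ref{prop:mixing}. This is also the step needing the most care, because Ozawa's Theorem~5 is stated for algebras with center identified with $Z_B$. One must make sure the identification $\iota$ is compatible with the way $Z_B$ acts on bounded sections, i.e.\ with the dictionary of Fact~\ref{fact:dictionary}. Because $\iota$ fixes $B$ pointwise, the action of each $b\in B$ as a central projection is unambiguous. Since $Z$ is the norm-closed span of $B$ (it is $C(X_B)$), agreement on $B$ propagates to all of $Z$ by continuity and linearity.

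Finally, every ingredient is a theorem of $\mathrm{ZFC}$: Ozawa's representation, the Stone-space identification and Berberian's mixing result. No forcing-specific hypothesis such as a chain condition is used, so the representation is obtained in $\mathrm{ZFC}$, as claimed.
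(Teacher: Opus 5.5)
Your proposal is correct and follows the paper's proof: Proposition~\ref{prop:full-center} for the center and completeness of $B$, Lemma~\ref{lem:commutative} to identify $Z$ with $Z_{B}$ fixing $B$ pointwise, and then the converse of Theorem~\ref{thm:factor-representation}, applied because the center is \emph{exactly} $Z_{B}$. Your extra check of the mixing hypothesis via Proposition~\ref{prop:mixing} is not part of the corollary's proof itself, but it matches the paper's discussion after Theorem~\ref{thm:bounded-section}, which notes that this condition holds automatically.
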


\begin{proof}
Proposition~\ref{prop:full-center} shows that $Z$ is a commutative \AWstar-algebra, that $B$ is complete, and that $Z$ sits in the center of $A$ as a unital \AWstar-subalgebra.  By Lemma~\ref{lem:commutative}, $Z\cong Z_{B}$ through an isomorphism fixing $B$ pointwise, and we identify the two.  Since $Z$ is the full center of $A$, the algebra $A$ is an \AWstar-algebra with center $Z_{B}$, which is exactly the hypothesis of the converse direction of Theorem~\ref{thm:factor-representation}.  That theorem yields an internal \AWstar-factor $M$ whose bounded global sections are $Z$-linearly $*$-isomorphic to $A$.  The Scott--Solovay construction and Theorem~\ref{thm:factor-representation} are $\mathrm{ZFC}$ results, so no additional set-theoretic hypothesis is used.
\end{proof}

\begin{rem}\label{rem:iso-invariance}
From this point on, we identify $A$ with the bounded global-section algebra of $M$ via the isomorphism supplied by Corollary~\ref{cor:factor-rep}, rather than repeating at each use that the two are only isomorphic and not literally equal.  This costs nothing, since every notion considered below (the self-adjoint order, projections, projection-lattice joins, self-adjoint suprema, normality, and monotone completeness) is preserved by any $*$-isomorphism: such a map is linear, sends adjoints to adjoints, and preserves the \cstar-norm and positive cone, so it is in particular an order isomorphism on self-adjoint parts, and consequently it carries every upward-directed family, whether of projections or of arbitrary self-adjoint elements, onto an upward-directed family, together with a least upper bound, when one exists, onto the least upper bound of the image family.  In particular, a result stated for the case where $A$ literally is the bounded global-section algebra of $M$, such as Lemma~\ref{lem:descent-normality} below, applies verbatim to the $A$ that Corollary~\ref{cor:factor-rep} actually produces.
\end{rem}

With the internal factor $M$ in hand, the remaining task is to make sure that whatever happens externally in $A$ (an upward-directed family of projections, an annihilator, a supremum) has a faithful internal counterpart in $M$, and vice versa.  The next three results carry this out.

\begin{lem}[Directed ascent by mixing]\label{lem:directed-ascent}
Let $B$ be a complete Boolean algebra, let $M$ be an internal
\cstar-algebra in $V^{B}$, and let $A$ be its bounded global-section
algebra.  If $D\subseteq\Proj(A)$ is externally upward-directed, then
\[
 \tv{\dot D \text{ is a nonempty upward-directed subset of } \Proj(M)}=1.
\]
\end{lem}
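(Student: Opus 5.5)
The proof splits the internal assertion into three pieces and checks each with Boolean value $1$: $\dot D\subseteq\Proj(M)$, $\dot D\neq\emptyset$, and for all $y,y'\in\dot D$ there is $z\in\dot D$ with $y\leq z$ and $y'\leq z$. The first two are immediate.

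\emph{Nonemptiness.} Since $D$ is externally upward-directed it is nonempty. Pick $d_0\in D$; then $\tv{\widehat{d_0}\in\dot D}=1$ by \eqref{eq:membership}.

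\emph{Containment in $\Proj(M)$.} By Lemma~\ref{lem:mixtures}, any $y$ with $b\leq\tv{y\in\dot D}$ agrees with some $\widehat{x_i}$ on a partition $(b_i)$ of $b$, with $x_i\in D$. Each $\widehat{x_i}$ is internally a projection in $M$ by Fact~\ref{fact:exactness}(iii). Substitutivity (Fact~\ref{fact:bvm}(i)) then gives $b_i\leq\tv{y\in\Proj(M)}$, so $b\leq\tv{y\in\Proj(M)}$. Equivalently, $\tv{\forall y\,(y\in\dot D\Rightarrow y\in\Proj(M))}=\bigwedge_{x\in D}\tv{\widehat{x}\in\Proj(M)}=1$, since the bounded quantifier over $\dot D$ reduces to a meet over $\dom(\dot D)$.

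\emph{Directedness.} This is the main step. The plan is to reduce the internal quantifiers over $\dot D$ to the displayed ascents. For a formula $\theta(y)$ that respects equality,
\[
\tv{\forall y\in\dot D\,\theta(y)}=\bigwedge_{x\in D}\tv{\theta(\widehat{x})}
\quad\text{and}\quad
\tv{\exists z\in\dot D\,\theta(z)}=\bigvee_{x\in D}\tv{\theta(\widehat{x})},
\]
which is just the definition of $\tv{\in}$ together with Fact~\ref{fact:bvm}(i). Applying the first identity twice, it suffices to show, for every pair $x,x'\in D$,
\[
\tv{\exists z\in\dot D\,(\widehat{x}\leq z\wedge\widehat{x'}\leq z)}=1.
\]
External directedness gives $w\in D$ with $x\leq w$ and $x'\leq w$ in $A_{\mathrm{sa}}$. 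By Fact~\ref{fact:exactness}(iii), $\tv{\widehat{x}\leq\widehat{w}}=\tv{\widehat{x'}\leq\widehat{w}}=1$, and $\tv{\widehat{w}\in\dot D}=1$. So the join above is attained at $w$ and equals $1$.

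\emph{Where mixtures enter.} Although Lemma~\ref{lem:mixtures} warns that $\dot D$ also contains central mixtures, they cause no difficulty here, because the universal quantifier only needs to be checked on generators. One way to see this: given mixtures $y=\sum b_i\widehat{x_i}$ and $y'=\sum c_j\widehat{x'_j}$, refine to the common partition $(b_i\meet c_j)$ from Lemma~\ref{lem:refinement}. On each cell choose $w_{ij}\in D$ above both $x_i$ and $x'_j$; the mixture of the $\widehat{w_{ij}}$ is a witness lying in $\dot D$. Mixing exists because projections are norm-bounded (Fact~\ref{fact:bvm}(v), Fact~\ref{fact:dictionary}).

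I expect the only delicate point to be justifying the reduction of bounded quantifiers over $\dot D$ to its domain. That is the step where the argument uses that every member of $\dot D$ is locally one of the generators. It is routine via \eqref{eq:membership} and substitutivity, and needs no chain condition.
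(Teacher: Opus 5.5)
Your proof is correct. Nonemptiness and containment in $\Proj(M)$ match the paper, but for directedness you take a different and shorter route.

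\textbf{The paper's route.} It fixes arbitrary names $x,y$ and some $b\leq\tv{x\in\dot D}\meet\tv{y\in\dot D}$. It then decomposes both names via Lemma~\ref{lem:mixtures}, passes to a common refinement, and picks $f_j\in D$ cell by cell. Finally it mixes the $\widehat{f_j}$, together with $\widehat{d_0}$ off $b$, into an explicit witness $z$ with $\tv{z\in\dot D}=1$.

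\textbf{Your route.} You use the standard bounded-quantifier identities
\[
\tv{\forall y\in u\,\theta(y)}=\bigwedge_{w\in\dom(u)}\bigl(u(w)\Rightarrow\tv{\theta(w)}\bigr),\qquad
\tv{\exists y\in u\,\theta(y)}=\bigvee_{w\in\dom(u)}\bigl(u(w)\meet\tv{\theta(w)}\bigr),
\]
which follow from \eqref{eq:membership} and substitutivity alone. Since every value of $\dot D$ is $1$, internal directedness reduces to pairs of displayed generators, and there the witness is simply $\widehat{w}$.

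\textbf{Comparison.} Your version shows that the mixed members of $\dot D$, which the paper's proof and its introductory remark treat as the main obstacle, are absorbed automatically by the Boolean-valued semantics. It also dispenses with Lemma~\ref{lem:refinement} and the mixing principle entirely. What the paper's version buys is an explicit description of a common upper bound of two mixed members as a mixture of elements of $D$. That is the cell-by-cell template that recurs in Lemma~\ref{lem:selfadjoint-directed-ascent}. The explicit construction is more to the point later, for internal sets not presented by an explicit domain, such as the $\D$ of Theorem~\ref{thm:directed-band-form}.

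\textbf{Minor points.} Your final paragraph on mixtures is therefore optional. Also, Fact~\ref{fact:bvm}(v) mixes arbitrary names, so norm-boundedness matters only if you want the mixed witness to descend to $A$.
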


The subtlety here is the one recorded in Lemma~\ref{lem:mixtures}: $\dot D$ contains more than just the displayed elements $\widehat d$.  It also contains every mixed element built by gluing together infinitely many different $d_i$'s along a partition, and those mixed elements need common upper bounds just as much as the displayed ones do.  The proof spends most of its effort on exactly this point.

\begin{proof}
Each displayed $\widehat{d}$, $d\in D$, belongs to $\dot D$ with Boolean truth
value $1$.  Since $D$ is nonempty, so is $\dot D$.  By
Fact~\ref{fact:exactness}(iii), $\tv{\widehat{d}\in\Proj(M)}=1$.  For an
arbitrary name $x$, \eqref{eq:membership} gives
\[
 \tv{x\in\dot D}=\bigvee_{d\in D}\tv{x=\widehat{d}}.
\]
Substitutivity yields
\[
 \tv{x=\widehat{d}}\leq\tv{x\in\Proj(M)}
\]
for every $d$, so $\dot D$ is internally a subset of $\Proj(M)$ with
Boolean truth value $1$.

It remains to include the mixed members in the directedness argument.  Fix
names $x,y$ and $b\in B$ satisfying
\[
 b\leq\tv{x\in\dot D}\meet\tv{y\in\dot D}.
\]
By Lemma~\ref{lem:mixtures} there are a partition $(b_i)_{i\in I}$ of $b$
with $b_i\leq\tv{x=\widehat{d_i}}$ for elements $d_i\in D$, and a partition
$(b'_k)_{k\in K}$ of $b$ with $b'_k\leq\tv{y=\widehat{e_k}}$ for elements
$e_k\in D$.  Let $(c_j)_{j\in J}$ be their common refinement, as in
Lemma~\ref{lem:refinement}, and for each $j$ choose indices $i(j)$ and
$k(j)$ with $c_j\leq b_{i(j)}\meet b'_{k(j)}$.  Upward directedness of $D$
supplies $f_j\in D$ satisfying
\[
 d_{i(j)}\leq f_j,\qquad e_{k(j)}\leq f_j
\]
for every nonzero $c_j$.

Choose $d_0\in D$.  The mixing principle gives a name $z$ such that
\[
 c_j\leq\tv{z=\widehat{f_j}}\ (j\in J),\qquad 1-b\leq\tv{z=\widehat{d_0}}.
\]
The name $z$ belongs to $M$ with Boolean truth value $1$, since it is
locally equal on a partition of $1$ to elements of $M$, and
Lemma~\ref{lem:mixtures} gives $\tv{z\in\dot D}=1$.

By Fact~\ref{fact:exactness}(iii) applied to $d_{i(j)}\leq f_j$ and
$e_{k(j)}\leq f_j$, followed by substitutivity on $c_j$,
\[
 c_j\leq\tv{x\leq z\text{ and }y\leq z}.
\]
Joining over $j$ yields
\[
 b\leq\tv{x\leq z\text{ and }y\leq z}.
\]
Since $x$, $y$, and $b$ were arbitrary, the directedness formula has
Boolean truth value $1$.
\end{proof}

\begin{lem}[Exact right-annihilator and join transfer]\label{lem:annihilator}
    Let $B$ be a complete Boolean algebra, let $M$ be an internal \AWstar-algebra in $V^{B}$, and let $A$ be its bounded global-section algebra.  Let $X\subseteq A$ be arbitrary and let 
    \[
        R_A(X)=eA   
    \]
    for a projection $e\in A$.  Then
    \[
        \tv{R_M(\dot X)=\widehat{e}\, M}=1.
    \]
    Consequently, if $S\subseteq\Proj(A)$ and $p=\bigvee\nolimits_{\Proj(A)}S$, then
    \[
    \tv{\bigvee\nolimits_{\Proj(M)}\dot S=\widehat{p}}=1.
    \]
\end{lem}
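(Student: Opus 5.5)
The plan is to prove the annihilator equality by two internal inclusions, each reduced to a statement about bounded sections via Fact~\ref{fact:quantifiers}. The join statement will then follow from the standard description of joins through annihilators, internalized by transfer.

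\emph{The inclusion $\widehat e\,M\subseteq R_M(\dot X)$.} Since $R_M(\dot X)$ is internally a right ideal, it suffices to show $\tv{\widehat e\in R_M(\dot X)}=1$, i.e.\ $\tv{\forall y\,(y\in\dot X\Rightarrow y\widehat e=0)}=1$. Fix a name $y$. By \eqref{eq:membership},
\[
\tv{y\in\dot X}=\bigvee_{x\in X}\tv{y=\widehat x}.
\]
For each $x\in X$ we have $xe=0$, so $\tv{\widehat x\,\widehat e=0}=1$ by Fact~\ref{fact:exactness}(i). Substitutivity then gives $\tv{y=\widehat x}\le\tv{y\widehat e=0}$, and taking the join over $x$ finishes this inclusion. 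No boundedness issue arises here, because membership in $\dot X$ is computed directly and mixtures are absorbed by the join. If $X=\emptyset$, then $e=1$, $\dot X$ is internally empty, and both sides equal $M$.

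\emph{The inclusion $R_M(\dot X)\subseteq\widehat e\,M$.} This is the main point, because $R_M(\dot X)$ may contain internally unbounded elements. I will apply Fact~\ref{fact:quantifiers} with $\varphi(a)\equiv a\in R_M(\dot X)$, which is legitimate since $\tv{\widehat0\in R_M(\dot X)}=1$, and with $\psi(a)\equiv \widehat e a=a$. This reduces the claim to bounded sections $a\in A$ with $\tv{\widehat a\in R_M(\dot X)}=1$. For such an $a$ and each $x\in X$, the fact that $\tv{\widehat x\in\dot X}=1$ gives $\tv{\widehat x\,\widehat a=0}=1$, hence $xa=0$ by Fact~\ref{fact:exactness}(i). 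So $a\in R_A(X)=eA$, which gives $ea=a$ and therefore $\tv{\widehat e\,\widehat a=\widehat a}=1$. The meet is $1$, and since $\widehat e$ is internally a projection (Fact~\ref{fact:exactness}(iii)), $\tv{R_M(\dot X)=\widehat e M}=1$. The internal \AWstar\ hypothesis on $M$ is used only to know that the internal join in the second part exists.

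\emph{The join transfer.} In any \AWstar-algebra, and hence internally by Fact~\ref{fact:bvm}(iii), for a set $S$ of projections one has $\bigvee S=1-q$, where $R(S)=qN$. Indeed, $R(S)$ consists of the elements $a$ with $sa=0$ for all $s\in S$, and for a projection $f$ the condition $f\perp s$ for all $s$ is equivalent to $f\le 1-\bigvee S$. So I apply the first part with $X=S$. Externally, $R_A(S)=(1-p)A$ by the same ZFC fact applied in $A$, and $A$ is an \AWstar-algebra by Theorem~\ref{thm:bounded-section}. Hence $e=1-p$, and the first part gives $\tv{R_M(\dot S)=(\widehat1-\widehat p)M}=1$. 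The internal version of the fact then yields $\tv{\bigvee\nolimits_{\Proj(M)}\dot S=\widehat p}=1$. I expect the only delicate step to be the reverse inclusion, and it is handled exactly by Fact~\ref{fact:quantifiers}, mirroring Proposition~\ref{prop:internal-aw}.
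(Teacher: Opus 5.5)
Your proof is correct. The inclusion $\widehat e\,M\subseteq R_M(\dot X)$ and the join transfer match the paper, but you obtain the reverse inclusion by a different mechanism.

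\textbf{The paper's route.} It uses the hypothesis that $M$ is internally \AWstar. The internal axiom and the maximum principle give an internal projection $q$ with $\tv{R_M(\dot X)=qM}=1$. Since $\tv{\|q\|\leq 1}=1$, $q$ is automatically a bounded section and descends to a projection $r\in A$. Then $xr=0$ for all $x\in X$, so $r\in R_A(X)=eA$, which gives $\tv{q\leq\widehat e}=1$. Your first inclusion gives $\tv{\widehat e\leq q}=1$. Thus the paper only ever descends a single element of norm at most $1$ and never invokes Fact~\ref{fact:quantifiers}.

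\textbf{Your route.} You quantify over all of $R_M(\dot X)$, which may contain internally unbounded elements, and let Fact~\ref{fact:quantifiers} absorb that difficulty. This is essentially the second half of the proof of Proposition~\ref{prop:internal-aw}, applied directly to $\dot X$ without normalization.

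\textbf{What your route buys.} You never use the internal \AWstar\ hypothesis. Your argument yields $\tv{R_M(\dot X)=\widehat e\,M}=1$ for any internal unital \cstar-algebra $M$, whenever $R_A(X)$ is generated by a projection $e$.

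In fact the hypothesis is not needed for the join statement either, despite your closing remark. Once $\tv{R_M(\dot S)=(1_M-\widehat p)M}=1$ is known, your own elementary argument shows internally that $\widehat p$ is the least projection upper bound of $\dot S$ in any unital \cstar-algebra. So the existence of the internal join comes for free.
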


This is the lemma that lets us stop worrying about the projection $e$ generating a right annihilator externally and its internal counterpart $q$ possibly drifting apart: they do not.  The two projections turn out to be exactly the same element, viewed on either side of the Boolean-valued boundary.

\begin{proof}
    Since $M$ is internally an \AWstar-algebra, the internal \AWstar axiom applied to the internal subset $\dot X$, together with the maximum principle (Fact~\ref{fact:bvm}(iv)), supplies an internal projection $q$ satisfying
    \[
        \tv{R_M(\dot X)=qM}=1.
    \]
    The assertion that every projection in a \cstar-algebra has norm at most $1$ is a $\mathrm{ZFC}$ theorem about the underlying set, operations, involution, and norm.  It is therefore a set-theoretic statement to which the transfer principle applies, and it gives $\tv{\|q\|\leq1}=1$.  Thus $q$ is a bounded global section, and by Fact~\ref{fact:exactness}(iv) there is $r\in A$ with
    \[
        \tv{\widehat{r}=q}=1,
    \]
    which is a projection by Fact~\ref{fact:exactness}(iii).

    Since $q\in R_M(\dot X)$, every displayed member of $\dot X$ annihilates $q$.  Hence
    \[
        \tv{\widehat{x}\, q=0}=1
    \]
    for every $x\in X$.  Exact descent of multiplication and equality gives $xr=0$, so $r\in R_A(X)=eA$.  Write $r=ea$.  Then $er=r$, and taking adjoints gives $re=r$.  Since $e$ and $r$ are projections, $r\leq e$. Exact ascent of this order relation yields
    \[
         \tv{q\leq\widehat{e}}=1.
    \]

    Conversely, $xe=0$ for every $x\in X$.  For an arbitrary name $y$, substitutivity gives
    \[
        \tv{y=\widehat{x}}\leq\tv{y\,\widehat{e}=0}
    \]
    for each $x\in X$.  Taking the join and using \eqref{eq:membership} gives
    \[
        \tv{y\in\dot X}\leq\tv{y\,\widehat{e}=0}.
    \]
    Thus $\widehat{e}$ annihilates every member of $\dot X$ with Boolean truth value $1$, and therefore $\widehat{e}\in qM$.  Internally, $\widehat{e}=qm$ for some $m$.
    Multiplication by $q$, followed by taking adjoints, gives
    \[
        q\widehat{e}=\widehat{e}=\widehat{e}\, q.
    \]
    Thus $\tv{\widehat{e}\leq q}=1$.  Combining the two inequalities gives $\tv{q=\widehat{e}}=1$, which proves the annihilator identity.

    For the consequence, let $R_A(S)=fA$.  If $s\in S$, then $sf=0$, so $s\leq1-f$.  Hence $p\leq1-f$.  Conversely, $s\leq p$ gives $s(1-p)=0$, so $1-p\in fA$ and $1-p\leq f$.  Therefore $f=1-p$.  The first part of the lemma now gives
    \[
        \tv{R_M(\dot S)=(1-\widehat{p})M}=1.
    \]
    The statement that the join of a set of projections in an \AWstar-algebra is the complement of its right-annihilator generator is a theorem of $\mathrm{ZFC}$: it is the elementary projection-order argument in the preceding paragraph, formalized using only sets, algebraic operations, and the \AWstar axiom. The transfer principle therefore assigns Boolean truth value $1$ to that statement inside $V^{B}$.  Applied to $\dot S$, it identifies its internal projection join as $\widehat{p}$.
\end{proof}

We have now checked everything needed to know that projections and their joins transfer correctly.  The one thing we have not yet touched is normality itself: the statement about \emph{self-adjoint} upper bounds, not just projection upper bounds.  That is the content of the final lemma of this section.

\begin{lem}[Exact descent of normality]\label{lem:descent-normality}
    Let $B$ be a complete Boolean algebra, and let $M$ be an internal normal \AWstar-algebra in $V^{B}$.  If $A$ is the bounded global-section algebra of $M$, then $A$ is a normal \AWstar-algebra.
\end{lem}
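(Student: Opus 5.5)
The plan is to verify normality of $A$ directly from the definition. First, $A$ is an \AWstar-algebra: $M$ is internally an \AWstar-algebra, so Theorem~\ref{thm:bounded-section} applies. Then we take an externally upward-directed family $D=(p_i)_{i\in I}\subseteq\Proj(A)$ with join $p=\bigvee_{\Proj(A)}D$. We also take $a\in A_{\mathrm{sa}}$ with $a\geq p_i$ for every $i$, and we must show $a\geq p$. By Fact~\ref{fact:exactness}(iii) it suffices to prove $\tv{\widehat{a}\geq\widehat{p}}=1$.

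We ascend the family. By Lemma~\ref{lem:directed-ascent}, $\tv{\dot D\text{ is a nonempty upward-directed subset of }\Proj(M)}=1$. By the consequence clause of Lemma~\ref{lem:annihilator}, applied with $S=D$, we get $\tv{\bigvee_{\Proj(M)}\dot D=\widehat{p}}=1$. Next we show that $\widehat{a}$ is internally a self-adjoint upper bound of all of $\dot D$, including the mixed members. Fact~\ref{fact:exactness}(iii) gives $\tv{\widehat{a}\text{ is self-adjoint}}=1$ and $\tv{\widehat{p_i}\leq\widehat{a}}=1$ for each $i$. For an arbitrary name $y$, substitutivity (Fact~\ref{fact:bvm}(i)) gives $\tv{y=\widehat{p_i}}\leq\tv{y\leq\widehat{a}}$. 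Taking the join over $i$ and using \eqref{eq:membership}, we obtain $\tv{y\in\dot D}\leq\tv{y\leq\widehat{a}}$. Since $y$ was arbitrary, $\tv{\forall y\in\dot D\,(y\leq\widehat{a})}=1$. This is the same device used in the second half of the proof of Lemma~\ref{lem:annihilator}.

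Finally we apply internal normality. The hypothesis $\tv{M\text{ is normal}}=1$ unwinds to the universally quantified statement that for every upward-directed $E\subseteq\Proj(M)$ and every self-adjoint $h$ with $h\geq e$ for all $e\in E$, we have $h\geq\bigvee_{\Proj(M)}E$. Instantiating the internal universal quantifiers at the names $\dot D$ and $\widehat{a}$ bounds the truth value of the implication below by $1$. The antecedent has value $1$ by the two previous steps, so $\tv{\widehat{a}\geq\bigvee_{\Proj(M)}\dot D}=1$. Substitutivity with $\tv{\bigvee_{\Proj(M)}\dot D=\widehat{p}}=1$ then gives $\tv{\widehat{a}\geq\widehat{p}}=1$, and exact descent of order (Fact~\ref{fact:exactness}(iii)) yields $a\geq p$.

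I expect the main obstacle to be the second step, namely that the external upper bound $a$ ascends to an internal upper bound of every member of $\dot D$ and not only of the displayed $\widehat{p_i}$. The membership formula \eqref{eq:membership} together with substitutivity should handle this without any refinement argument. The remaining care is bookkeeping: checking that the internal join appearing in the normality statement is the one identified by Lemma~\ref{lem:annihilator}, and that the internal order is the one transported by Fact~\ref{fact:exactness}(iii).
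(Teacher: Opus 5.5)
Your proposal is correct and follows the paper's proof essentially step for step. Both arguments get the \AWstar\ property from Theorem~\ref{thm:bounded-section}, internal directedness from Lemma~\ref{lem:directed-ascent}, and the identification of the internal join with $\widehat{p}$ from Lemma~\ref{lem:annihilator}. Both then extend the upper bound $\widehat{a}$ to the mixed members of $\dot D$ via substitutivity and \eqref{eq:membership}, and apply internal normality. The only difference is cosmetic: for the final descent of $\tv{\widehat{p}\leq\widehat{a}}=1$ you cite Fact~\ref{fact:exactness}(iii) directly, whereas the paper re-derives $p\leq a$ by descending an internal square root $\widehat{a}-\widehat{p}=s^*s$; both are valid.
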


\begin{proof}
    We provide two separate proofs: one here and the other in the remark that follows.  The forward direction of Theorem~\ref{thm:bounded-section} makes $A$ an \AWstar-algebra.  Let $D\subseteq\Proj(A)$ be upward-directed, and let
    \[
        p=\bigvee\nolimits_{\Proj(A)}D.
    \]
    Lemma~\ref{lem:directed-ascent} gives Boolean truth value $1$ to the assertion that $\dot D$ is a nonempty upward-directed subset of $\Proj(M)$. Lemma~\ref{lem:annihilator} identifies its internal projection-lattice join:
    \[
        \tv{\bigvee\nolimits_{\Proj(M)}\dot D=\widehat{p}}=1.
    \]
    Let $a\in A_{\mathrm{sa}}$ satisfy $d\leq a$ for every $d\in D$.  By Fact~\ref{fact:exactness}(iii), $\widehat{a}$ is internally self-adjoint and
    \[
        \tv{\widehat{d}\leq\widehat{a}}=1
    \]
    for every $d\in D$.  If $y$ is an arbitrary name, substitutivity and \eqref{eq:membership} give
    \[
        \tv{y\in\dot D}=\bigvee_{d\in D}\tv{y=\widehat{d}}\leq\tv{y\leq\widehat{a}}.
    \]
    Thus $\widehat{a}$ is an internal self-adjoint upper bound for all mixed members of $\dot D$, not only for its displayed ascents, which is exactly the gap that needed closing before internal normality could be applied at all.

    Internal normality of $M$ now gives
    \[
        \tv{\widehat{p}\leq\widehat{a}}=1.
    \]
    We verify the exact descent of this order inequality.  Internally, $\widehat{a}-\widehat{p}$ is positive.  The positive-square-root theorem for \cstar-algebras is a theorem of $\mathrm{ZFC}$, since \cstar-algebras, their norms, and the functional-calculus assertion are set-theoretically encoded.  The transfer principle therefore supplies an internal $s$ satisfying
    \[
        \widehat{a}-\widehat{p}=s^*s
    \]
    with Boolean truth value $1$.  The transferred \cstar norm identity gives $\|s\|^2=\|\widehat{a}-\widehat{p}\|$.  Since $\widehat{a}-\widehat{p}$ is bounded by a ground-model real, $s$ is a bounded global section.  Choose $c\in A$ with $\tv{\widehat{c}=s}=1$.  Exact descent of the displayed identity gives
    \[
        a-p=c^*c\geq0.
    \]
    Therefore $p\leq a$.  Since $a$ was an arbitrary self-adjoint upper bound, $p$ is the least upper bound of $D$ in $A_{\mathrm{sa}}$.
\end{proof}

\begin{rem}
    There is an equivalent supremum-descent approach to Lemma \ref{lem:descent-normality}.  Internal normality supplies the self-adjoint supremum $u$ of $\dot D$, and Lemma~\ref{lem:annihilator} gives $\tv{u=\widehat{p}}=1$.  The reverse direction of the bounded-directed-set result \cite[Lemma~8.2(3)]{Oza90} yields $v$ in the self-adjoint unit ball of $A$ such that $v$ is the external self-adjoint supremum of $D$ and $\tv{\widehat{v}=u}=1$.  Since $p$ is a self-adjoint upper bound, $v\leq p$.  The element $u$ is internally a projection, so $v$ is a projection by Fact~\ref{fact:exactness}(iii).  Minimality of $p$ among projection upper bounds gives $p\leq v$.  Hence $p=v$, which again proves normality.
\end{rem}

\section{Proof of the main theorem}\label{sec:main-proof}

The hard work is behind us.  All that remains is to assemble the representation, the factor theorem, and the transfer lemmas into a single argument, which turns out to take only a few lines.

\begin{proof}[Proof of Theorem~\ref{thm:normality}]
    Let $A$ be an \AWstar-algebra, set $Z=Z(A)$, and put $B=\Proj(Z)$. Corollary~\ref{cor:factor-rep} supplies an internal \AWstar-factor $M$ in $V^{B}$ whose bounded global-section algebra is $Z$-linearly $*$-isomorphic to $A$.

    The assertion that every \AWstar-factor is normal is a theorem of $\mathrm{ZFC}$ by Theorem~\ref{thm:factor-normality}.  After \cstar-algebras, projections, arbitrary projection families, joins, and positivity are encoded as sets, this is a $\mathrm{ZFC}$ statement.  By Fact~\ref{fact:bvm}(iii), the assertion that $M$ is normal therefore has Boolean truth value $1$.

    By Lemma~\ref{lem:descent-normality} applied to $M$, the algebra $A$ is normal.
\end{proof}

This concludes our discussion of normality.  We now turn to monotone completeness, where the same representation does the work again, this time at the level of arbitrary bounded self-adjoint directed families rather than just projections.

\section{Monotone completeness}\label{sec:monotone}

We now prove Theorem~\ref{thm:monotone-reduction}.  All of the work is done by an unconditional biconditional describing exactly how monotone completeness passes across the bounded-section representation, with no assumption about factors anywhere in sight.  This is the natural generalization of the projection-specific transfer of Section~\ref{sec:transfer} to arbitrary bounded self-adjoint directed families, and most of the work below amounts to redoing the previous section's arguments one level of generality up.

\begin{lem}[Self-adjoint directed-family ascent]\label{lem:selfadjoint-directed-ascent}
    Let $B$ be a complete Boolean algebra, let $M$ be an internal unital \cstar-algebra in $V^{B}$, and let $A$ be its bounded global-section algebra.  Suppose $D\subseteq A_{\mathrm{sa}}$ is nonempty and upward directed and $\|d\|\leq K$ for all $d\in D$, where $K$ is a ground-model positive integer.  Then
    \[
        \tv{\dot D\text{ is a nonempty norm-bounded upward-directed subset of } M_{\mathrm{sa}}}=1,
    \]
    and every member of $\dot D$ has norm at most $\check{K}$.
\end{lem}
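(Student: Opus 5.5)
The plan is to run the proof of Lemma~\ref{lem:directed-ascent} again, with $\Proj(M)$ replaced by $M_{\mathrm{sa}}$ and with a norm bound carried along at every step. Nonemptiness is immediate: $D\neq\emptyset$ and $\tv{\widehat d\in\dot D}=1$ for each $d\in D$. For membership in $M_{\mathrm{sa}}$ and the norm bound, fix an arbitrary name $y$. By Fact~\ref{fact:exactness}(iii) each $\widehat d$ is internally self-adjoint, and by Fact~\ref{fact:exactness}(ii) we have $\tv{\|\widehat d\|\leq\check K}=1$. Substitutivity (Fact~\ref{fact:bvm}(i)) then gives
\[
\tv{y=\widehat d}\leq\tv{y\in M_{\mathrm{sa}}\text{ and }\|y\|\leq\check K}.
\]
Joining over $d\in D$ and using \eqref{eq:membership} shows $\tv{y\in\dot D}\leq\tv{y\in M_{\mathrm{sa}}\text{ and }\|y\|\leq\check K}$. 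Since $y$ was arbitrary, $\dot D\subseteq M_{\mathrm{sa}}$ holds internally, every member of $\dot D$ has norm at most $\check K$, and so $\dot D$ is internally norm-bounded.

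Directedness is where the work is, and as before the difficulty is that $\dot D$ contains mixed elements. Fix names $x,y$ and $b\in B$ with $b\leq\tv{x\in\dot D}\meet\tv{y\in\dot D}$. Lemma~\ref{lem:mixtures} gives a partition $(b_i)$ of $b$ and elements $d_i\in D$ with $b_i\leq\tv{x=\widehat{d_i}}$, and similarly a partition $(b'_k)$ of $b$ and elements $e_k\in D$ for $y$. Take a common refinement $(c_j)$ using Lemma~\ref{lem:refinement}, and for each nonzero $c_j$ use external directedness to choose $f_j\in D$ with $d_{i(j)},e_{k(j)}\leq f_j$. Fix some $d_0\in D$ and mix: let $z$ be a name with $c_j\leq\tv{z=\widehat{f_j}}$ for all $j$ and $1-b\leq\tv{z=\widehat{d_0}}$ (Fact~\ref{fact:bvm}(v)). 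Every $f_j$ and $d_0$ has norm at most $K$, so by Fact~\ref{fact:dictionary} the name $z$ is the ascent of an element of $A$; in fact we only need $\tv{z\in\dot D}=1$, and that follows from Lemma~\ref{lem:mixtures}. Exact ascent of order (Fact~\ref{fact:exactness}(iii)) gives $\tv{\widehat{d_{i(j)}}\leq\widehat{f_j}}=1$ and $\tv{\widehat{e_{k(j)}}\leq\widehat{f_j}}=1$, and substitutivity on $c_j$ then yields $c_j\leq\tv{x\leq z\text{ and }y\leq z}$. Joining over $j$ gives $b\leq\tv{\exists w\in\dot D\,(x\leq w\wedge y\leq w)}$.

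To conclude, we need the truth value of $\forall x,y\in\dot D\,\exists w\in\dot D\,(x\leq w\wedge y\leq w)$ to be $1$. Taking $b=\tv{x\in\dot D}\meet\tv{y\in\dot D}$ in the previous step and then forming the meet over all names $x,y$ gives exactly this. The main point needing care is the one flagged above: the upper bound must cover all mixed members of $\dot D$, not just the displayed ascents $\widehat d$. The cell-by-cell choice of the $f_j$ followed by mixing handles this, just as in Lemma~\ref{lem:directed-ascent}. The one genuinely new ingredient compared with that lemma is the uniform ground-model bound $K$: it is what keeps the mixtures, and hence every member of $\dot D$, bounded by $\check K$.
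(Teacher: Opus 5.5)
Your proof is correct and follows the paper's argument: you get internal self-adjointness and the $\check K$ bound by substitutivity over the displayed ascents, and directedness by choosing upper bounds cell by cell on a partition of $\tv{x\in\dot D}\meet\tv{y\in\dot D}$ and mixing with $\widehat{d_0}$ off it. The only cosmetic difference is how the cells arise: the paper applies infinite distributivity and Lemma~\ref{lem:refinement} once to the family indexed by $D\times D$, while you refine two partitions obtained from Lemma~\ref{lem:mixtures}, exactly as in Lemma~\ref{lem:directed-ascent}.
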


This is the exact analogue of Lemma~\ref{lem:directed-ascent}, with $\Proj(A)$ replaced by all of $A_{\mathrm{sa}}$.

\begin{proof}
    Choose $d_0\in D$.  The displayed member $\widehat{d_0}$ proves internal nonemptiness.  For any name $x$,
    \[
        \tv{x\in\dot D}=\bigvee_{d\in D}\tv{x=\widehat d}.
    \]
    By Fact~\ref{fact:exactness}, every $\widehat d$ is internally self-adjoint and has norm at most $\check{K}$, so substitutivity gives
    \[
        \tv{x\in\dot D}\leq\tv{x=x^*\text{ and }\|x\|\leq\check{K}}.
    \]

    For directedness, let $x,y$ be names and put $q=\tv{x\in\dot D}\meet\tv{y\in\dot D}$.  Distributivity gives
    \[
        q=\bigvee_{(d,e)\in D\times D}\bigl(\tv{x=\widehat d}\meet\tv{y=\widehat e}\bigr).
    \]
    By Lemma~\ref{lem:refinement}, applied to this family indexed by $D\times D$, these Boolean values may be disjointified into cells $a_{d,e}$ with join $q$.  Upward directedness of $D$ supplies $f_{d,e}\in D$ with $d,e\leq f_{d,e}$.  Mix $\widehat{f_{d,e}}$ on $a_{d,e}$ and $\widehat{d_0}$ on $1-q$.  The resulting name $z$ belongs to $\dot D$ with truth value one.  On $a_{d,e}$, exact ascent and substitutivity yield $x,y\leq z$.  Joining the cells gives
    \[
        q\leq\tv{\text{there is }z\in\dot D\text{ with }x\leq z\text{ and }y\leq z}.
    \]
    This proves internal upward directedness.
\end{proof}

\begin{prop}[Upper-bound ascent and exact order descent]\label{prop:upper-bound-transfer}
    Let $B$ be a complete Boolean algebra, let $M$ be an internal unital \cstar-algebra in $V^{B}$, and let $A$ be its bounded global-section algebra.  For a nonempty external $D\subseteq A_{\mathrm{sa}}$, form $\dot D$ from the canonical ascents with coefficient $1$.  Then, for every $a\in A_{\mathrm{sa}}$,
    \[
        a\text{ upper-bounds }D \iff \tv{\widehat a\text{ upper-bounds }\dot D}=1.
    \]
    If an internal $u$ is the least upper bound of $\dot D$ in the full internal self-adjoint order and $\tv{\|u\|\leq\check{n}}=1$ for a ground-model integer $n$, then $u$ descends to the least upper bound of $D$ in the full external self-adjoint order.
\end{prop}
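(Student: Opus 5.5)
The biconditional follows the same pattern as the upper-bound step in Lemma~\ref{lem:descent-normality}.

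For the forward direction, suppose $d\leq a$ for every $d\in D$. Fact~\ref{fact:exactness}(iii) gives $\tv{\widehat d\leq\widehat a}=1$ for each $d\in D$. For an arbitrary name $y$, substitutivity (Fact~\ref{fact:bvm}(i)) together with \eqref{eq:membership} gives
\[
\tv{y\in\dot D}=\bigvee_{d\in D}\tv{y=\widehat d}\leq\tv{y\leq\widehat a}.
\]
Taking the meet over all names $y$ shows that $\tv{\forall y\,(y\in\dot D\Rightarrow y\leq\widehat a)}=1$. This covers the mixed members of $\dot D$ as well as the displayed ascents.

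For the converse, assume $\tv{\widehat a\text{ upper-bounds }\dot D}=1$. Each displayed $\widehat d$ satisfies $\tv{\widehat d\in\dot D}=1$, so $\tv{\widehat d\leq\widehat a}=1$. Exact descent of order, Fact~\ref{fact:exactness}(iii), then gives $d\leq a$.

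For the descent of the supremum, let $u$ be as in the statement. Since $\tv{\|u\|\leq\check n}=1$, $u$ is a bounded section. Internally it is self-adjoint, because it is a supremum in $M_{\mathrm{sa}}$. Fact~\ref{fact:exactness}(iv) and (iii) therefore give $v\in A_{\mathrm{sa}}$ with $\tv{\widehat v=u}=1$.

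The element $v$ is an upper bound for $D$. Indeed, $\tv{\widehat v\text{ upper-bounds }\dot D}\geq\tv{\widehat v=u}\meet\tv{u\text{ upper-bounds }\dot D}=1$ by substitutivity, and the converse direction of the biconditional then gives $d\leq v$ for all $d\in D$.

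The element $v$ is also least. Let $a\in A_{\mathrm{sa}}$ be any upper bound of $D$. The forward direction makes $\widehat a$ an internal upper bound of $\dot D$ with truth value $1$, and $\widehat a$ is internally self-adjoint. Internal leastness of $u$ gives $\tv{u\leq\widehat a}=1$, hence $\tv{\widehat v\leq\widehat a}=1$, and exact order descent gives $v\leq a$. So $v$ is the least upper bound of $D$ in $A_{\mathrm{sa}}$.

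I expect no step to be a serious obstacle, since everything reduces to Fact~\ref{fact:exactness}(iii) and substitutivity. The one point that needs care is that ``$\widehat a$ upper-bounds $\dot D$'' must be checked against the mixed members of $\dot D$, not only the displayed $\widehat d$. The join over \eqref{eq:membership} handles this, exactly as in Lemma~\ref{lem:descent-normality}.

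A second point is that the descent uses the norm bound on $u$ only to make $u$ a bounded section. If $\tv{u\leq\widehat a}=1$ were to need the positivity descent argument written out explicitly, I would repeat the square-root argument of Lemma~\ref{lem:descent-normality}. That argument is already packaged in Fact~\ref{fact:exactness}(iii), so citing the fact suffices.
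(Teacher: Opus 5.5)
Your proposal is correct and follows the paper's proof essentially step for step: the forward direction uses \eqref{eq:membership} and substitutivity, the converse uses the displayed ascents and Fact~\ref{fact:exactness}(iii), and the descent of $u$ uses Fact~\ref{fact:exactness}(iv) followed by the two halves of the biconditional. Your explicit substitutivity step, showing that $\widehat v$ inherits the upper-bound property from $u$, just spells out what the paper compresses into a single sentence.
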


There is no single result in Section~\ref{sec:transfer} this generalizes directly; rather, it extracts and isolates the ascent-then-descend maneuver that appears embedded inside the proof of Lemma~\ref{lem:descent-normality}, freed from that lemma's extra hypotheses that $M$ be an \AWstar-algebra and that $D$ consist of projections.

\begin{proof}
    Suppose $d\leq a$ for every $d\in D$.  By Fact~\ref{fact:exactness}(iii), $\tv{\widehat d\leq\widehat a}=1$.  For any name $x$, $\tv{x\in\dot D}=\bigvee_{d\in D}\tv{x=\widehat d}$, and substitutivity yields $\tv{x=\widehat d}\leq\tv{x\leq\widehat a}$; taking the join over $d$ shows that $\widehat a$ upper-bounds every member of $\dot D$.

    Conversely, each displayed $\widehat d$ belongs to $\dot D$ with truth value one.  If $\widehat a$ internally upper-bounds $\dot D$, then $\tv{\widehat d\leq\widehat a}=1$ for every $d$, and Fact~\ref{fact:exactness}(iii) gives $d\leq a$.

    Now let $u$ have the asserted least-upper-bound and norm properties. Fact~\ref{fact:exactness}(iv) supplies $v\in A$ with $\tv{\widehat v=u}=1$, and self-adjointness descends.  The upper-bound property of $u$ gives $d\leq v$ for every $d\in D$.  If $a\in A_{\mathrm{sa}}$ is any external upper bound, the first part makes $\widehat a$ an internal upper bound.  Internal leastness yields $\tv{u\leq\widehat a}=1$, so $v\leq a$.  Hence $v=\sup_{A_{\mathrm{sa}}}D$, with no norm restriction on $a$.
\end{proof}

The last ingredient we need handles a genuine complication that did not arise for projections, and has no counterpart in Section~\ref{sec:transfer}: an internally norm-bounded directed family need not be uniformly bounded by any single \emph{ground-model} integer all at once, only by some internal real.  (Every projection is automatically bounded by $1$, which is exactly why this issue never surfaced in Lemma~\ref{lem:directed-ascent}.)  The fix is to slice $B$ into countably many bands, on each of which a uniform ground-model bound does exist.

\begin{thm}[Countable-band form for directed families]\label{thm:directed-band-form}
    Let $B$ be a complete Boolean algebra, let $M$ be an internal unital \cstar-algebra in $V^{B}$, and let $A$ be its bounded global-section algebra. Suppose
    \[
        \tv{\D\text{ is a nonempty norm-bounded upward-directed subset of } M_{\mathrm{sa}}}=1.
    \]
    There are a countable partition $(b_k)_{k\in I}$ of $1$, indexed by a set $I$ of positive integers, and nonempty norm-bounded upward-directed sets $D_k\subseteq(b_kA)_{\mathrm{sa}}$ such that
    \[
        \|d\|\leq k\quad(d\in D_k)
    \]
    and, for every internal name $x$,
    \[
        b_k\meet\tv{x\in\D}=\bigvee_{d\in D_k}\bigl(b_k\meet\tv{x=\widehat d}\bigr).
    \]
    The family $\D$ has an internal least upper bound if and only if every $D_k$ has an external least upper bound $y_k$ in $b_kA$.  The internal supremum is the mixing of the $\widehat{y_k}$'s, and its leastness against arbitrary internal upper bounds requires no global norm bound.
\end{thm}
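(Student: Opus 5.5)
The plan is to slice $B$ along the internal norm bound of $\D$ using \eqref{eq:archimedean}, build each $D_k$ out of the descents of members of $\D$ on the $k$-th band, and then reduce both directions of the supremum equivalence to Proposition~\ref{prop:upper-bound-transfer} applied band by band through Lemma~\ref{lem:band-sections}.

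\emph{Construction of the bands.} By the maximum principle there is a name $R$ with $\tv{R\in\R\text{ and }\|x\|\leq R\text{ for all }x\in\D}=1$. Put $c_n=\tv{R\leq\check n}$ for $n\geq1$. These increase in $n$, and their join is $1$ by \eqref{eq:archimedean}. Set $b_k=c_k\meet(1-\bigvee_{n<k}c_n)$, let $I$ be the set of $k$ with $b_k\neq0$, and note that $(b_k)_{k\in I}$ is a partition of $1$ with $b_k\leq\tv{\|x\|\leq\check k\text{ for all }x\in\D}$. Define
\[
    D_k=\{x\in b_kA_{\mathrm{sa}}:\ b_k\leq\tv{\widehat x\in\D}\}.
\]
Fact~\ref{fact:dictionary} gives $\|x\|\leq k$ on $D_k$: $x$ vanishes off $b_k$, and on $b_k$ its ascent lies in $\D$.

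\emph{Properties of $D_k$.} Each remaining property is obtained by the same move: take a name, mix it with $\widehat0$ off $b_k$, observe that the mixture is a bounded section with bound $k$, and descend it with Fact~\ref{fact:exactness}(iv).
\begin{itemize}
    \item Nonemptiness: apply this move to a witness of $\exists x\in\D$ supplied by Fact~\ref{fact:bvm}(iv).
    \item Directedness: for $x,y\in D_k$, take the internal common upper bound $z\in\D$ supplied by the maximum principle. It holds on $b_k$, so its descent lies in $D_k$ above $x$ and $y$, using Fact~\ref{fact:exactness}(iii) and Fact~\ref{fact:dictionary}.
    \item Membership identity, inequality $\geq$: this follows from substitutivity.
    \item Membership identity, inequality $\leq$: fix a name $x$ and set $c=b_k\meet\tv{x\in\D}$. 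Mix $x$ on $c$, a fixed $\widehat{d_0}$ with $d_0\in D_k$ on $b_k-c$, and $\widehat0$ on $1-b_k$. The result is bounded by $k$ and descends to some $d\in D_k$ with $c\leq\tv{x=\widehat d}$.
\end{itemize}
Via Lemma~\ref{lem:band-sections} and Fact~\ref{fact:restriction}, the membership identity says exactly that $\D\!\restr\!b_k$ and $\dot{D_k}$ (formed in $V^{B\restr b_k}$ from the sections of $b_kA$) are internally equal.

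\emph{Internal supremum gives external suprema.} Suppose $u$ is an internal least upper bound of $\D$. Restricted to $b_k$ it is the supremum of $\dot{D_k}$. Moreover $\widehat{d_0}\leq u\leq k\cdot1$ there, since $k\cdot 1$ is an upper bound, so $\|u\|\leq k$ on $b_k$. Proposition~\ref{prop:upper-bound-transfer}, applied in the band, then descends $u$ to an external supremum $y_k$ of $D_k$ in $b_kA$.

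\emph{External suprema give an internal supremum.} Given the $y_k$, let $u$ be the mixture of the $\widehat{y_k}$ with respect to $(b_k)$. This mixing is done at the level of names (Fact~\ref{fact:bvm}(v)), since the bounds $k$ are not uniform and $u$ need not be a bounded section. Proposition~\ref{prop:upper-bound-transfer} in each band shows that $u$ is an internal upper bound of $\D$ on each $b_k$, hence everywhere.

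\emph{Main obstacle: leastness against an unbounded internal upper bound.} The hard step is showing $\tv{u\leq w}=1$ for an arbitrary internal upper bound $w$ of $\D$, which may have no ground-model norm bound. My approach is to apply \eqref{eq:archimedean} again to $\|w\|$, refining $(b_k)$ into cells $e=b_k\meet e_m$ on which $\|w\|\leq m$. On each cell, mix $w$ with $\widehat0$ off $e$ and descend to $a\in eA_{\mathrm{sa}}$. Then $ea$ upper-bounds $eD_k$ in $eA$.

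The needed localization is that $ey_k$ is the supremum of $eD_k$ in $eA$. This holds because $y_k$ is central-compatible: if $a'$ upper-bounds $eD_k$, then $a'+(b_k-e)y_k$ upper-bounds $D_k$, so it dominates $y_k$, and multiplying by $e$ gives $ey_k\leq a'$. Hence $ey_k\leq a$, so $e\leq\tv{u\leq w}$. Joining over all cells gives the result, and this uses no global norm bound.
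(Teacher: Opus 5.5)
Your proposal is correct and follows the paper's proof essentially step for step: the same Archimedean slicing into bands $b_k$, the same definition of $D_k$ with the same mix-with-$\widehat{0}$-and-descend moves for nonemptiness, directedness and the coverage identity, and the same second Archimedean slicing of an arbitrary internal upper bound to prove leastness of the mixed supremum. The differences are cosmetic. You get the forward direction by invoking Proposition~\ref{prop:upper-bound-transfer} inside $V^{B\restr b_k}$ through Lemma~\ref{lem:band-sections}, where the paper compares directly against a globally mixed competitor. You also pad the cell comparison with $(b_k-e)y_k$, where the paper uses $k(b_k-e_{k,m})$.
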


\begin{proof}
    By Fact~\ref{fact:bvm}(iv), choose an internal positive real $R$ satisfying $\tv{\|x\|\leq R\text{ for every }x\in\D}=1$.  Define $c_k=\tv{R\leq\check{k}}$ and $b_k=c_k\meet(1-c_{k-1})$ (with $c_0=0$), discarding zero cells and letting $I$ be the set of surviving indices.  By \eqref{eq:archimedean} we have $\bigvee_kc_k=1$, so the $b_k$'s form a countable partition, and $b_k\leq c_k$.

    For each $k$, set
    \[
        D_k=\{d\in(b_kA)_{\mathrm{sa}}:b_k\leq\tv{\widehat d\in\D}\}.
    \]
    The maximum principle supplies $x_0$ with $\tv{x_0\in\D}=1$; on $b_k$ its norm is at most $\check{k}$, so mixing $x_0$ on $b_k$ with zero off $b_k$ gives a bounded section, whose descent belongs to $D_k$.  Hence $D_k$ is nonempty.

    If $d,e\in D_k$, internal directedness and the maximum principle give a name $z$ such that $b_k\leq\tv{z\in\D,\ \widehat d\leq z,\ \widehat e\leq z}$. Mixing $z$ on $b_k$ with zero off $b_k$ and descending gives $f\in D_k$ with $d,e\leq f$.  Hence $D_k$ is upward directed.  Membership in $\D$ and $b_k\leq\tv{R\leq\check{k}}$ imply $\|d\|\leq k$.

    We prove the coverage identity.  Its right side is at most its left side by substitutivity.  For the converse, fix $x$ and put $q=b_k\meet\tv{x\in\D}$.  Mix $x$ on $q$, the fixed $x_0$ on $b_k-q$, and zero on $1-b_k$.  The result has norm at most $k$ and descends to some $d\in D_k$.  On $q$, $x=\widehat d$, so $q\leq\bigvee_{d\in D_k}(b_k\meet\tv{x=\widehat d})$.  This proves equality.

    Suppose $\D$ has internal supremum $u$.  On $b_k$, its members are bounded above by $\check{k}1_M$, and a full member is bounded below by $-\check{k}1_M$.  Thus the restriction of $u$ to $b_k$ is bounded. Descend it to $y_k\in(b_kA)_{\mathrm{sa}}$.  The coverage identity makes $y_k$ an upper bound of $D_k$.  If $w\in(b_kA)_{\mathrm{sa}}$ upper-bounds $D_k$, that identity also shows that $\widehat w$ upper-bounds every member of $\D$ on $b_k$.  Internal leastness applies to global upper bounds, so let $t$ be the mixing of $\widehat w$ on $b_k$ with $u$ off $b_k$.  Then $t$ upper-bounds $\D$ with Boolean truth value $1$, so $\tv{u\leq t}=1$ and in particular $b_k\leq\tv{u\leq\widehat w}$.  Since $y_k$ and $w$ both lie in $b_kA$, descending gives $y_k\leq w$.  Hence $y_k=\sup D_k$.

    Conversely, suppose every $D_k$ has supremum $y_k$.  Since $-kb_k\leq y_k\leq kb_k$, the ascents may be mixed to an internal self-adjoint element $u$.  Coverage gives $\tv{x\leq u\text{ for every }x\in\D}=1$.

    Let $t$ be any internal self-adjoint upper bound.  For fixed $k$, disjointify the values $b_k\meet\tv{\|t\|\leq\check{m}}$, whose join is $b_k$ by \eqref{eq:archimedean}, into a partition $(e_{k,m})_m$ of $b_k$. Descend the restriction of $t$ to $e_{k,m}$ as $w_{k,m}$.  For $a\in D_k$, define $W_{k,m}=w_{k,m}+k(b_k-e_{k,m})$.  This upper-bounds $a$ on both complementary bands.  Therefore $y_k\leq W_{k,m}$, so $e_{k,m}y_k\leq w_{k,m}$.  Ascending and joining first in $m$, then in $k$, gives $\tv{u\leq t}=1$.  Thus $u=\sup\D$.
\end{proof}

With this in hand, the biconditional we were after falls out fairly directly, and comes with a small bonus: it does not just say \emph{that} monotone completeness transfers, but pinpoints the largest band on which it holds.

\begin{thm}[Boolean-valued invariance of monotone completeness]\label{thm:monotone-invariance}
    Let $B$ be a complete Boolean algebra, let $M$ be an internal unital \cstar-algebra in $V^{B}$, and let $A$ be its bounded global-section algebra.  Then
    \[
        A\text{ is monotone complete} \iff \tv{M\text{ is monotone complete}}=1.
    \]
    If $c=\tv{M\text{ is monotone complete}}$, then $c$ is the largest $b\in B$ for which $bA$ is monotone complete.
\end{thm}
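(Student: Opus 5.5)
The biconditional follows from Lemma~\ref{lem:selfadjoint-directed-ascent}, Proposition~\ref{prop:upper-bound-transfer} and Theorem~\ref{thm:directed-band-form}, and the band statement then follows by localizing with Lemma~\ref{lem:band-sections} and Fact~\ref{fact:restriction}. I first prove the two directions of the biconditional, then the claim about $c$.

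For ($\Leftarrow$), assume $\tv{M\text{ is monotone complete}}=1$. Let $D\subseteq A_{\mathrm{sa}}$ be nonempty, upward directed and norm bounded. Enlarge the bound to a ground-model integer $K$. Lemma~\ref{lem:selfadjoint-directed-ascent} makes $\dot D$ internally a nonempty, norm-bounded, upward-directed subset of $M_{\mathrm{sa}}$ with truth value $1$, so internal monotone completeness gives a supremum $u$ via the maximum principle. Internally $u$ lies between a full member $\widehat{d_0}$ and $\check K 1_M$, hence $\tv{\|u\|\leq\check K}=1$. Proposition~\ref{prop:upper-bound-transfer} then descends $u$ to $\sup_{A_{\mathrm{sa}}}D$.

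For ($\Rightarrow$), assume $A$ is monotone complete. By Fact~\ref{fact:quantifiers}-style reasoning, or directly through the maximum principle, it suffices to show that every name $\D$ which is, with truth value $1$, a nonempty norm-bounded directed subset of $M_{\mathrm{sa}}$ has an internal supremum with truth value $1$. Some care is needed here, since the hypothesis of the internal statement may only hold on a band. I handle this by mixing: on the complement of the band where the hypothesis holds I replace $\D$ by $\{0\}$, which changes nothing relevant. Theorem~\ref{thm:directed-band-form} then produces the partition $(b_k)$ and the directed sets $D_k\subseteq(b_kA)_{\mathrm{sa}}$, bounded by $k$. Each $b_kA$ is monotone complete, because a bounded directed family in $b_kA$ is one in $A$; its supremum $s$ in $A$ satisfies $b_ks\leq s$ against upper bounds, and $b_k s$ is again an upper bound since the $b_k$ are central projections. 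Hence each $D_k$ has a supremum $y_k$, and the theorem yields the internal supremum.

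For the band statement, write $c=\tv{M\text{ is monotone complete}}$. If $c\neq0$, Fact~\ref{fact:restriction} gives $\tv{M\restr c\text{ is monotone complete}}^{V^{B\restr c}}=c$. Lemma~\ref{lem:band-sections} identifies $cA$ with the bounded global-section algebra of $M\restr c$, so the biconditional applied over $B\restr c$ shows that $cA$ is monotone complete. Conversely, if $bA$ is monotone complete for some nonzero $b$, the same two results show that $M\restr b$ is monotone complete with value $b$, so $b\leq c$. The case $b=0$ is trivial.

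I expect the main obstacle to be the bookkeeping in ($\Rightarrow$): reducing the internal universal statement to names $\D$ whose hypothesis holds with value exactly $1$, and checking that monotone completeness passes to central corners. If the mixing reduction becomes awkward, I would instead argue by contradiction. Assume $c\neq1$, restrict to $1-c$ using Fact~\ref{fact:restriction}, take a witness family there that has value $1-c$, apply Theorem~\ref{thm:directed-band-form} over $B\restr(1-c)$, and contradict the monotone completeness of $(1-c)A$.
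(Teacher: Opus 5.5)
Your proposal is correct and follows the paper's proof essentially step for step. Both arguments use ascent plus Proposition~\ref{prop:upper-bound-transfer} for the internal-to-external direction, Theorem~\ref{thm:directed-band-form} plus monotone completeness of central corners for the converse, and Fact~\ref{fact:restriction} with Lemma~\ref{lem:band-sections} for the band clause. The deviations are minor: you reduce to value-$1$ hypotheses by mixing $\D$ with $\{0\}$ rather than passing to $V^{B\restr b}$, which is equally valid; and your corner argument is cleanest if you take $b_ks$ as the supremum in $b_kA$, since it is an upper bound of the family and satisfies $b_ks\leq b_kw=w$ for every upper bound $w\in b_kA$. Reading your sentence literally as $b_ks\leq s$ instead requires the extra step $(1-b_k)s\geq(1-b_k)e_0=0$, as in the paper.
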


Only the implication from internal to external monotone completeness is used in the proof of Theorem~\ref{thm:monotone-reduction} below.  We record the converse, which is what Theorem~\ref{thm:directed-band-form} supplies, because it makes the correspondence exact.  The final clause is not needed either, but it identifies the Boolean value that appears in that proof: $c$ is the largest central band on which $A$ is monotone complete.

\begin{proof}
    Assume $\tv{M\text{ is monotone complete}}=1$.  Let $D\subseteq A_{\mathrm{sa}}$ be nonempty, norm bounded, and upward directed, say $\|d\|\leq K$ for all $d\in D$. Lemma~\ref{lem:selfadjoint-directed-ascent} makes $\dot D$ internally nonempty, upward directed, self-adjoint, and $\check{K}$-bounded.  Internal monotone completeness supplies its least upper bound $u$, and since $\dot D$ is $\check{K}$-bounded, $-\check{K}1_M\leq u\leq\check{K}1_M$, so $u$ is a bounded global section. By Proposition~\ref{prop:upper-bound-transfer}, $u$ descends to the least upper bound of $D$ in $A_{\mathrm{sa}}$.  Hence $A$ is monotone complete.

    Conversely, assume $A$ is monotone complete.  We first record that each central corner $bA$ of a monotone-complete algebra $A$ is itself monotone complete: if $E\subseteq(bA)_{\mathrm{sa}}$ is nonempty, norm bounded, and upward directed, and $y=\sup_AE$, choose a bound $K$ for $E$; then $Kb$ is an upper bound for $E$ in $A$, so $y\leq Kb$.  Fix $e_0\in E$; from $e_0\leq y$ and $(1-b)e_0=0$ we get $(1-b)y\geq0$, while $y\leq Kb$ gives $(1-b)y\leq0$; hence $y=by\in bA$, where it remains least.
    
    It is therefore enough to treat names $\D$ for which
    \[
        \tv{\D\text{ is a nonempty norm-bounded upward-directed subset of }M_{\mathrm{sa}}}=1 .
    \]

    For a general name, let $b$ be that truth value and run the argument below in $V^{B\restr b}$, whose bounded global-section algebra is the monotone-complete corner $bA$ by Lemma~\ref{lem:band-sections}. So let $\D$ be an internal nonempty norm-bounded upward-directed subset of $M_{\mathrm{sa}}$.  Theorem~\ref{thm:directed-band-form} gives a countable partition $(b_k)_{k\in I}$ of $1$ and nonempty norm-bounded upward-directed sets $D_k\subseteq(b_kA)_{\mathrm{sa}}$ with the stated coverage identity.

    Applying the corner observation to each $b_kA$, every $D_k$ has a supremum $y_k\in(b_kA)_{\mathrm{sa}}$.  Theorem~\ref{thm:directed-band-form} mixes the $\widehat{y_k}$'s into an internal element $u$ and shows it is the internal least upper bound of $\D$, with leastness verified against an arbitrary internal competitor carrying no ground-model uniform norm bound.  Hence $\tv{M\text{ is monotone complete}}=1$.

    Finally, work below a Boolean element $b\in B$.  By Lemma~\ref{lem:band-sections}, the bounded global sections of $M$ in the relative universe $V^{B\restr b}$ are exactly $bA$, and by Fact~\ref{fact:restriction} the restricted truth value of internal monotone completeness equals the unit $b$ of $B\!\restr\!b$ precisely when $b\leq c$. Applying the biconditional just proved, computed in $V^{B\restr b}$ in place of $V^{B}$, shows that $bA$ is monotone complete exactly when $b\leq c$.  Hence $c$ is the largest such band.
\end{proof}

All that remains is to combine the biconditional above with the restriction principle of Section~\ref{sec:background}.

\begin{proof}[Proof of Theorem~\ref{thm:monotone-reduction}]
    One direction is immediate: an \AWstar-factor is an \AWstar-algebra, so any model of $\mathrm{ZFC}$ containing an \AWstar-factor that is not monotone complete contains an \AWstar-algebra that is not monotone complete.

    For the other direction, suppose $\mathrm{ZFC}+$ ``there is an \AWstar-algebra that is not monotone complete'' is consistent, and work in a model $V$ of that theory.  Let $A$ be an \AWstar-algebra in $V$ that is not monotone complete, let $Z=Z(A)$, and let $B=\Proj(Z)$.  By Corollary~\ref{cor:factor-rep} there is an internal \AWstar-factor $M$ in $V^{B}$ whose bounded global-section algebra we identify with $A$, as in Remark~\ref{rem:iso-invariance}.  Put
    \[
        c=\tv{M\text{ is monotone complete}},\qquad b=1-c.
    \]
    Were $c=1$, Theorem~\ref{thm:monotone-invariance} would make $A$ monotone complete; so $b\not=0$.

    Now pass to $B\!\restr\!b$.  By Fact~\ref{fact:restriction}, truth values in $V^{B\restr b}$ are obtained from those in $V^{B}$ by meeting with $b$, so
    \[
        \tv{M\text{ is an \AWstar-factor}}^{V^{B\restr b}}=b, \qquad \tv{M\text{ is monotone complete}}^{V^{B\restr b}}=b\meet c=0.
    \]
    Thus $V^{B\restr b}$ is a Boolean-valued model of $\mathrm{ZFC}$ in which the sentence ``there is an \AWstar-factor that is not monotone complete'' has Boolean truth value $1$.  Since $b\not=0$, the algebra $B\!\restr\!b$ is nontrivial, so $0\not=1$ there.  By soundness of Boolean-valued semantics, $\mathrm{ZFC}+$ ``there is an \AWstar-factor that is not monotone complete'' is consistent.
\end{proof}

\begin{rem}\label{rem:direct-transfer}
    The contrapositive of Theorem~\ref{thm:monotone-reduction} gives: if $\mathrm{ZFC}$ proves that every \AWstar-factor is monotone complete, then $\mathrm{ZFC}$ proves that every \AWstar-algebra is monotone complete.  This form has a direct proof, which uses only Fact~\ref{fact:bvm}(iii) and no restriction to a band, and we record it because it is the form most likely to be applied.  Suppose $\mathrm{ZFC}$ proves that every \AWstar-factor is monotone complete, and let $A$ be an \AWstar-algebra, with $M$ and $B$ as above.  The assertion that every \AWstar-factor is monotone complete is then a theorem of $\mathrm{ZFC}$, so the transfer principle assigns it Boolean truth value $1$ in $V^{B}$. Specializing to $M$, which is internally an \AWstar-factor, gives $\tv{M\text{ is monotone complete}}=1$, and Theorem~\ref{thm:monotone-invariance} makes $A$ monotone complete.
\end{rem}

Between the two main theorems, normality holds for every \AWstar-algebra without exception, while the monotone completeness conjecture is now equiconsistent with its restriction to factors.

\end{document}